\documentclass[reqno]{amsart}
\oddsidemargin9mm
\evensidemargin9mm 
\textwidth15.0cm    
\parskip0mm 
\usepackage{csquotes}
\usepackage{hyperref}
\usepackage{amsmath}   
\usepackage{amssymb} 
\usepackage{mathrsfs} 
\usepackage{accents}
\usepackage{graphicx}

\usepackage{tabularx,colortbl}

\usepackage{tikz} 
\usetikzlibrary{calc} 
\usepackage{xcolor}  
\usetikzlibrary{arrows.meta} 
\usepackage{amsthm} 
\usepackage{float}
\usepackage{enumitem}
\usepackage[english]{babel}
\usepackage[font=footnotesize, labelfont=bf]{ caption}
\usepackage{ esint }
\usepackage{ dsfont }

\newcommand{\eg}{{\it e.g.}}
\newcommand{\ie}{{\it i.e.}}

\theoremstyle{plain}
\begingroup
\newtheorem{theorem}{Theorem}[section]
\newtheorem{lemma}[theorem]{Lemma}
\newtheorem{proposition}[theorem]{Proposition}

\endgroup

% Environments %

\theoremstyle{definition}
\begingroup
\newtheorem{definition}[theorem]{Definition}

\endgroup
\theoremstyle{remark}
\newtheorem{remark}[theorem]{Remark}
\renewcommand{\tilde}{\widetilde}
\newcommand{\sm}{\setminus}

\renewcommand{\d}{ \mathrm{d}}

\DeclareMathOperator{\supp}{supp}

\newcommand{\pvint}{\, \mathrm{p.v.} \! \int}
\newcommand{\Schwartz}{\mathscr{S}}
\newcommand{\D}{\mathrm{D}}
\newcommand{\Dcal}{\mathcal{D}}

\numberwithin{equation}{section}
\newcommand{\N}{\mathbb{N}}

\newcommand{\R}{\mathbb{R}}
\newcommand{\C}{\mathbb{C}}
\renewcommand{\S}{\mathbb{S}} 
\renewcommand{\H}{\mathcal{H}} 
\newcommand{\de}{\partial} 
\newcommand{\e}{\varepsilon}

\newcommand{\x}{{\times}}

\renewcommand{\L}{\mathcal{L}}
\renewcommand{\hat}{\widehat}
\newcommand{\W}{\mathcal{W}}
\newcommand{\F}{\mathscr{F}}

\usepackage{mathtools}
\mathtoolsset{showonlyrefs}  

\newcommand{\myequation}{\begin{equation}}
  \newcommand{\myendequation}{\end{equation}}
  \let\[\myequation
  \let\]\myendequation

  \newenvironment{acknowledgements}{%
  % Rename Abstract to Acknowledgements
  \begin{abstract}
}{%
  \end{abstract}
}

  \title[Comparison between peridynamics and wave equation]{Comparison between solutions \\ to the linear peridynamics model \\ and solutions to the classical wave equation}

  \author[G. M. Coclite]{G. M. Coclite}
  \address[Giuseppe Maria Coclite]{\newline
     Dipartimento di Meccanica, Matematica e Management, Politecnico di Bari,
    Via E.~Orabona 4, I--70125 Bari, Italy.}
  \email[]{giuseppemaria.coclite@poliba.it}
  
  \author[S. Dipierro]{S. Dipierro}
  \address[Serena Dipierro]{\newline Department of Mathematics and Statistics,
  University of Western Australia, 35 Stirling Highway, WA6009 Crawley, Australia}
  \email[]{serena.dipierro@uwa.edu.au}

  \author[F. Maddalena]{F. Maddalena}
  \address[Francesco Maddalena]{\newline
   Dipartimento di Meccanica, Matematica e Management, Politecnico di Bari,
   Via E.~Orabona 4, I--70125 Bari, Italy.}
  \email[]{francesco.maddalena@poliba.it}
  
  \author[G. Orlando]{G. Orlando}
  \address[Gianluca Orlando]{\newline
   Dipartimento di Meccanica, Matematica e Management, Politecnico di Bari,
   Via E.~Orabona 4, I--70125 Bari, Italy.}
  \email[]{gianluca.orlando@poliba.it}
  
  \author[E. Valdinoci]{E. Valdinoci}
  \address[Enrico Valdinoci]{\newline Department of Mathematics and Statistics,
  University of Western Australia, 35 Stirling Highway, WA6009 Crawley, Australia.}
  \email[]{enrico.valdinoci@uwa.edu.au}

\subjclass[2020]{74A70, 74B10, 70G70, 35L05.}
% 74A70 Peridynamics
% 74B10 Linear elasticity with initial stresses
% 70G70 Functional analytic methods for problems in mechanics
% 35L05 Wave equation

\usepackage[
backend=biber,
style=numeric-verb,
sorting=nty,
maxbibnames=99
]{biblatex}
\addbibresource{bibliography.bib}

\begin{document}

\begin{abstract} In this paper, we consider an equation inspired by
linear peridynamics and we establish its connection with the classical wave equation.

In particular, given a horizon~$\delta>0$ accounting for the region of influence around a material point,
we prove existence and uniqueness of a solution~$u_\delta$ and demonstrate the convergence of~$u_\delta$ to solutions to the classical wave equation as~$\delta \to 0$. 

Moreover, we prove that the solutions to the peridynamics model with small frequency initial data
are close to solutions to the classical wave equation. 
\end{abstract}

\maketitle

\setcounter{tocdepth}{1}
\tableofcontents

\section{Introduction}

The theory of Continuum Mechanics lays its foundation on the notion of internal forces acting in a material body~\cite{Gur73}.
Classically, a locality assumption allows one to condense the description of the internal forces via the Cauchy stress tensor. 
In this setting, it is well known that the local balance of momentum is given by a system of partial differential equations, whose prototype is the classical linear wave equation for~$\R^d$-valued functions.
However, modern problems in Materials Science require to broaden one's perspective in order to treat more complex material behaviors (fracture, plasticity, design of smart materials) that go beyond the classical theory ruled by PDEs.
One of the promising routes is to explore the possibility of removing the locality assumption to allow for a more general scenario. 
In this context, the theory of peridynamics proposed by Stewart Silling~\cite{Sil00} moves its steps on this route. 
See~\cite{SilEptWeck07, SilLeh10, Sil10} for further developments in the field and~\cite{LopPel21, LopPel22, LopPel22-2, CocFanLopMadPel20, CocCocMadPol24} for numerical methods for the peridynamics model.

The aim of this paper is to analyze in detail the mathematical relationship between the peridynamics model and the classical wave equation.

% A classical problem in material science consists in predicting the behavior of materials under deformation, damage, and fracture. For this purpose, it is often necessary to introduce mathematical theories going beyond the realm of partial differential equations: indeed,
% differently than continuum mechanics, realistic solutions of these problems are expected to
% show discontinuities and cracks and partial differential equations are typically
% not well-suited to describe these features, due to their use
% of continuous fields and derivatives. 

% Thus, as proposed by Stewart Silling in~\cite{Sil00},
% to handle complex material behavior like fracture mechanics,
% peridynamics should go beyond the classical elasticity theory and rely on integral equations that do not require continuity assumptions.
 
In the description of materials provided by these nonlocal equations,
the size of the interaction range within the material body, called the ``peridynamic horizon'', plays a crucial role. 
Roughly speaking, the horizon, which will be denoted by~$\delta$ in this paper, is the radius of a spherical neighborhood within which a point interacts with other points in the material. 
The first goal of this paper is to understand how the size of the horizon influences the model by rigorously detecting the asymptotic behavior of the solutions as~$\delta\to0$ (Theorem~\ref{thm:delta_to_0} below). 

The problem of the asymptotic behavior of the peridynamics model with vanishing horizon has been previously addressed in the literature in different contexts. In the static case, this has been investigated, \eg, in~\cite{SilLeh08, DuGunLehZho13} and in~\cite{MenDu15, BelMorPed15,BelCueMor20} via~$\Gamma$-convergence tools. 
In the dynamic case, \cite{EmmWeck07} prove a local convergence result for evolutions inside a bounded domain. 
Our result differs from the previous approaches, in that it stems from an intrinsic property of the peridynamics evolution model, that is one of the features which makes it substantially different from the classical wave equation. This property can be seen as a nontrivial dispersion relation, for which different frequencies propagate at different speeds~\cite{WeckSillAsk08,CocDipFanMadRomVal21, CocDipFanMadVal22, CocCocFanMad23}.  

% \begin{itemize}
%   \item \cite{EmmWeck07} local limit of evolutions inside a bounded domain, linear case 
%   \item \cite{Sil08} convergence of stresses 
%   \item \cite{DuGunLehZho13} convergence of equilibria
%   \item  \cite{MenDu15} $\Gamma$-convergence, static case, linear
%   \item \cite{BelMorPed15,BelCueMor20} $\Gamma$-convergence, static case, nonlinear
% \end{itemize}

We pursue this line of thought and move to the second goal of this paper. The presence of the nontrivial dispersion relation suggests that another meaningful parameter is the size~$R$ of the range of frequencies of the initial data. 
We will show that, for initial data with frequencies supported in a small region, the solutions to the peridynamics model are close to solutions to the classical wave equation.
Specifically, we estimate the Sobolev norm of the gap between the solutions of the aforementioned models in terms of~$\delta$ and~$R$ (Theorem~\ref{thm:low_frequencies} below). 

We describe now more precisely the model studied in this paper and the results obtained. 
\medskip

% the ``horizon'' is a key concept that defines the range of interaction between particles. Roughly speaking, the horizon, which will be denoted by~$\delta$ in this paper, is the radius of a spherical neighborhood within which a point interacts with other points in the material.
% One of the main goal of this paper is to understand how the size of the horizon influences the model by rigorously detecting the asymptotic behavior of the solutions as~$\delta\to0$.

The theory of peridynamics is quite broad and comprises different formulations. For the analysis from a mathematical perspective of general nonlinear equations in this field, see~\cite{CocDipMadVal18}. 
%, accounting for large deformations, plasticity, material failure,crack propagation, and damage accumulation. 
In this paper, we will focus on the linear theory,
in which  the internal force interactions behave linearly with respect to displacement increments.
% This simplification is standard in elasticity and essentially
% assumes small deformations and strains. 
We will also consider the problem in
the whole space~$\R^d$, in any dimension~$d \geq 1$.

More specifically, we are interested in solutions~$u \colon (0,+\infty) \x \R^d \to \R^d$ to the equation 
    \[
\begin{cases} \label{eq:linear_peridynamics}
  \de_{tt} u(t,x) - K_\delta[u(t,\cdot)](x) = 0 \, , &   (t,x) \in (0,+\infty) \x \R^d \, , \\
  u(0,x) = u_0(x) \, , \quad \de_t u(0,x) = v_0(x) \, , &  x \in \R^d \,,
\end{cases} 
\]
where~$K_\delta[u]$ is the linear (isotropic) peridynamics operator given by
\[ \label{eq:def_K_delta}
K_\delta[u](x) := - \frac{\kappa}{\delta^{2(1-\alpha)}}\pvint_{\R^d} \chi_\delta(y) \frac{u(x) - u(x-y)}{|y|^{d + 2 \alpha}} \, \d y \, , \quad \text{for every } x \in \R^d \, .
\] 
The function~$\chi_\delta$ is supported in the ball of radius~$\delta$, where~$\delta > 0$ is the peridynamics horizon. 
The quantities~$\kappa \in( 0,+\infty)$ and~$\alpha \in (0,1)$ are constitutive parameters of the model and the notation ``$\mathrm{p.v.}$'' stands for the principal value of the integral.
Further details on this operator will be given in Section~\ref{SEC:OPE}. 

The drop of the locality assumption mentioned at the beginning is reflected by the introduction of the nonlocal operator~$K_\delta$ in~\eqref{eq:linear_peridynamics} in place of the Laplace operator, which instead rules the classical wave equation. 

Existence and uniqueness for solutions belonging to the energy space for~\eqref{eq:linear_peridynamics} (and its nonlinear versions) have been proved in~\cite{CocDipMadVal18}. Here we formulate well-posedness for~\eqref{eq:linear_peridynamics} in a more general functional setting in which we consider solutions in Sobolev spaces~$H^s$ with any real order of differentiation~$s\in\R$
(the technical details needed to treat these spaces will be recalled in Section~\ref{0djofsvnk}).
In this framework, we prove the following existence result, based on the dispersion relation~$\xi \mapsto \omega_\delta(\xi)$ of the peridynamics equation, depicted in Figure~\ref{fig:dispersion_relation} and studied in detail in Section~\ref{sec:peridynamics_operator}. 

\begin{figure}[H]
  \begin{tikzpicture}
    \node (graph) at (0,0) {\includegraphics[scale=0.7]{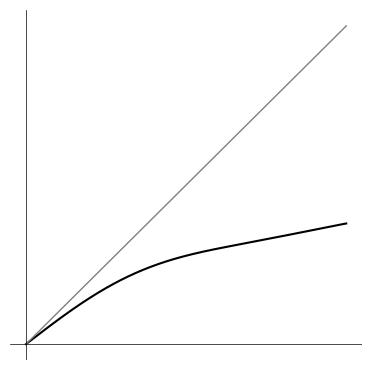}};
    \draw (3,-3.2) node {$|\xi|$};
    \draw (2.8,-0.3) node {$\omega_\delta(\xi)$};
    \draw (2,2.6) node {$\gamma |\xi|$};
  \end{tikzpicture}
  \caption{Aspect of the dispersion relation~$\omega_\delta(\xi)$ and comparison with the dispersion relation~$\gamma |\xi|$ of the wave equation with speed of propagation~$\gamma$.}  
  \label{fig:dispersion_relation}
\end{figure}

\begin{theorem} \label{thm:existence}
  Let~$s \in \R$. 
  Let~$u_0 \in H^s(\R^d;\R^d)$ and~$v_0 \in H^{s-\alpha}(\R^d;\R^d)$. 
  
  Then, there exists~$u_\delta \in C([0,+\infty);H^s(\R^d;\R^d)) \cap C^1((0,+\infty);H^{s-\alpha}(\R^d;\R^d))$ such that, for every~$t \in [0,+\infty)$, 
  \[ \label{eq:Fourier_solution}
    \hat{u}_\delta (t,\cdot) = \F[u_\delta(t,\cdot)]  = \cos(\omega_\delta(\cdot) t) \hat{u}_0  + \frac{\sin(\omega_\delta(\cdot) t)}{\omega_\delta(\cdot)} \hat{v}_0 \, .
  \]
  Moreover, $u_\delta$ is a distributional solution to the linear peridynamics model in~\eqref{eq:linear_peridynamics} with initial data~$(u_0,v_0)$.
\end{theorem}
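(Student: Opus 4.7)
The plan is to take the Fourier transform in the spatial variable, which diagonalises the peridynamics operator and reduces the Cauchy problem~\eqref{eq:linear_peridynamics} to a family of second-order ordinary differential equations in time, parametrised by the frequency~$\xi \in \R^d$. Since~$K_\delta$ is a convolution-type operator, its Fourier symbol (which is derived in Section~\ref{sec:peridynamics_operator}) is~$-\omega_\delta(\xi)^2$, so that~\eqref{eq:linear_peridynamics} becomes, for every~$\xi$,
\[
\partial_{tt} \hat u(t,\xi) + \omega_\delta(\xi)^2 \hat u(t,\xi) = 0, \qquad \hat u(0,\xi) = \hat u_0(\xi), \quad \partial_t \hat u(0,\xi) = \hat v_0(\xi).
\]
The unique~$\xi$-wise solution of this ODE is precisely the right-hand side of~\eqref{eq:Fourier_solution}, where the ratio~$\sin(\omega_\delta t)/\omega_\delta$ extends continuously across~$\{\omega_\delta=0\}$ with value~$t$. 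This identifies the candidate solution~$u_\delta$ through its Fourier transform.

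The next step is to check that this candidate lies in the claimed spaces. Using the asymptotics of~$\omega_\delta$ recalled in Section~\ref{sec:peridynamics_operator}, namely~$\omega_\delta(\xi)\sim\gamma|\xi|$ near zero and~$\omega_\delta(\xi)\lesssim\langle\xi\rangle^\alpha$ at infinity, together with the trivial bound~$|\cos(\omega_\delta(\xi) t)|\le 1$ and the estimate
\[
\left| \frac{\sin(\omega_\delta(\xi) t)}{\omega_\delta(\xi)} \right| \le C_t \, \langle \xi \rangle^{-\alpha}
\]
(obtained by splitting according to whether~$|\xi|$ is small or large and invoking~$|\sin z/z|\le 1$), one gets that~$\hat u_\delta(t,\cdot)\in L^2(\langle\xi\rangle^{2s}\,\d\xi)$ uniformly on bounded time intervals, i.e.~$u_\delta(t,\cdot)\in H^s$. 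Differentiating~\eqref{eq:Fourier_solution} formally in~$t$ and exploiting the symmetric bounds~$|\omega_\delta(\xi)\sin(\omega_\delta(\xi) t)|\lesssim\langle\xi\rangle^\alpha$ and~$|\cos|\le 1$ gives~$\partial_t u_\delta(t,\cdot)\in H^{s-\alpha}$. Continuity of the maps~$t\mapsto u_\delta(t,\cdot)$ and~$t\mapsto\partial_t u_\delta(t,\cdot)$ in the respective Sobolev norms then follows by dominated convergence.

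Finally, I would verify that~$u_\delta$ is a distributional solution to~\eqref{eq:linear_peridynamics}. Pairing the equation with a smooth compactly supported test function~$\varphi$ and applying Plancherel converts the contributions~$\langle \partial_{tt} u_\delta,\varphi\rangle$ and~$\langle K_\delta u_\delta,\varphi\rangle$ into integrals of~$\partial_{tt}\hat u_\delta(t,\xi)$ and~$-\omega_\delta(\xi)^2 \hat u_\delta(t,\xi)$ against~$\overline{\hat\varphi}(t,\xi)$; since the ODE is satisfied pointwise in~$\xi$, these cancel after Fubini, and the initial conditions are read off from~\eqref{eq:Fourier_solution} at~$t=0$. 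The main technical obstacle is the careful bookkeeping of the multiplier bounds on~$\omega_\delta$ across different Sobolev scales, in particular the combined behaviour near~$\xi = 0$ (where the ratio~$\sin(\omega_\delta t)/\omega_\delta$ is controlled uniformly in~$\xi$ but not by a negative power of~$|\xi|$) and near~$|\xi|=\infty$ (where the~$\langle\xi\rangle^{-\alpha}$ gain kicks in); once the asymptotic properties of~$\omega_\delta$ established in Section~\ref{sec:peridynamics_operator} are in hand, the remainder of the argument reduces to routine Fourier analysis and dominated convergence.
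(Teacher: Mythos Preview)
Your proposal is correct and follows the paper's approach closely for the construction and the regularity assertions: both define~$u_\delta$ via the Fourier multipliers~$\cos(\omega_\delta t)$ and~$\sin(\omega_\delta t)/\omega_\delta$, use the asymptotics~$\omega_\delta(\xi)\sim\gamma|\xi|$ near zero and~$\omega_\delta(\xi)\sim\lambda|\xi|^\alpha$ at infinity to place~$u_\delta(t,\cdot)$ in~$H^s$ and~$\partial_t u_\delta(t,\cdot)$ in~$H^{s-\alpha}$, and invoke dominated convergence for the time continuity.

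The one genuine methodological difference is in the verification that~$u_\delta$ is a distributional solution. You propose a direct Plancherel/Fubini argument (pass to Fourier side, integrate by parts in~$t$ using the pointwise ODE, read off the boundary terms at~$t=0$). The paper instead mollifies in space: it shows that~$u_\delta*\eta_\e$ is a \emph{classical} solution with smooth initial data~$(u_0*\eta_\e,v_0*\eta_\e)$, integrates by parts classically to get the distributional identity for~$u_\delta*\eta_\e$, and then lets~$\e\to0$. Your route is more direct but demands care when~$s<0$ (the duality pairing is then~$H^{-|s|}$--$H^{|s|}$, and the Fubini/integration-by-parts step must be justified at that level); the paper's mollification sidesteps this by always working with smooth approximants and pushing the low regularity to a single limit at the end. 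A minor point you omit is checking that~$u_\delta$ is real-valued (Hermitian symmetry of~$\hat u_\delta$) and that the multipliers are smooth with bounded derivatives so that the products with~$\hat u_0,\hat v_0$ are well-defined tempered distributions; the paper records this explicitly via a separate lemma.
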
 

As customary, in~\eqref{eq:Fourier_solution}, the notations~$ \hat{u}_\delta $ and~$ \F[u_\delta]$
both stand for the Fourier transform of~$u_\delta$ 
with respect to the~$x$-variable (the two notations will be used interchangeably
depending on typographical convenience and the Fourier transform setting employed in this
paper will be recalled in Section~\ref{ihfgbvk9ikF}).
We also stress that, for simplicity, equation~\eqref{eq:linear_peridynamics} is written in the classical sense, but the solution found in Theorem~\ref{thm:existence} are intended in the sense of
distributions (the precise setting will be recalled in
Definition~\ref{def:distributional_solution}). This type of generality is important to
deal with discontinuous solutions, as required by a realistic model of peridynamics.

As a counterpart of the existence result in Theorem~\ref{thm:existence}, we also have a
uniqueness result, which goes as follows:

\begin{theorem} \label{thm:uniqueness}
  Let~$s \in \R$. 
  Let~$u_\delta \in L^1_{\mathrm{loc}}((0,+\infty);H^s(\R^d;\R^d))$ be a distributional solution to the linear peridynamics model in~\eqref{eq:linear_peridynamics} with initial data~$(0,0)$. 
Then, $u_\delta \equiv 0$.
\end{theorem}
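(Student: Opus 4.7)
The plan is to apply the spatial Fourier transform to decouple the nonlocal equation into a one-parameter family of second-order ODEs indexed by the frequency. Since $K_\delta$ acts componentwise as the scalar Fourier multiplier with symbol $-\omega_\delta(\xi)^2$ (as established in Section~\ref{sec:peridynamics_operator}), the equation $\partial_{tt} u_\delta - K_\delta u_\delta = 0$ becomes, on the Fourier side, $\partial_{tt} \hat u_\delta(t,\xi) + \omega_\delta(\xi)^2 \hat u_\delta(t,\xi) = 0$. Since the only solution of this ODE compatible with zero initial data is the trivial one, this should force $u_\delta \equiv 0$.

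To make this rigorous, the first step is to interpret the zero-initial-data condition in Definition~\ref{def:distributional_solution} by setting $\tilde u_\delta := u_\delta \mathbf{1}_{t>0}$ and observing that $(u_0,v_0) = (0,0)$ is equivalent to requiring $\tilde u_\delta$ to be a distributional solution of $\partial_{tt} \tilde u_\delta - K_\delta \tilde u_\delta = 0$ on the whole space-time $\R \x \R^d$ (the Dirac-type source terms $u_0 \otimes \delta'(t) + v_0 \otimes \delta(t)$ which would otherwise appear vanish). Applying $\F$ in the $x$ variable and using the multiplier identity $\F[K_\delta \tilde u_\delta] = -\omega_\delta^2\, \hat{\tilde u}_\delta$, one obtains
\[
\partial_{tt} \hat{\tilde u}_\delta(t,\xi) + \omega_\delta(\xi)^2 \hat{\tilde u}_\delta(t,\xi) = 0 \quad \text{in } \Dcal'(\R \x \R^d).
\]
Testing against tensor products $\phi(t)\overline{\psi(\xi)}$ with $\phi \in C_c^\infty(\R)$ and $\psi \in \mathcal{S}(\R^d;\R^d)$ yields, for each such $\psi$, a distributional identity on $\R$ for the pairing $f_\psi(t) := \langle \hat{\tilde u}_\delta(t,\cdot), \psi\rangle$, namely $f_\psi'' + f_{\omega_\delta^2\psi} = 0$ in $\Dcal'(\R)$, with $f_\psi$ vanishing on~$(-\infty,0]$.

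For the final step I would localize $\psi$ to shrinking neighborhoods of an arbitrary $\xi_0 \in \R^d$; using that $\omega_\delta$ is continuous and bounded (by Section~\ref{sec:peridynamics_operator}), the coupling term $f_{\omega_\delta^2\psi}$ collapses in the limit to multiplication by $\omega_\delta(\xi_0)^2$, yielding the constant-coefficient ODE $y'' + \omega_\delta(\xi_0)^2 y = 0$ for the slice $t \mapsto \hat{\tilde u}_\delta(t, \xi_0)$. The operator $\partial_{tt} + \omega_\delta(\xi_0)^2$ is hypoelliptic on~$\R$, so its distributional solutions are smooth; as the slice vanishes on the open set $(-\infty,0)$, ODE uniqueness at $t=0$ forces it to vanish on all of $\R$. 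Hence $\hat u_\delta \equiv 0$ a.e., and consequently $u_\delta \equiv 0$. The main obstacle is this slicewise reduction: the informal limit $\psi \to$ (Dirac mass at $\xi_0$) is not a legal test function, and $\hat{\tilde u}_\delta$ is only $L^1_{\mathrm{loc}}$ in~$t$ with values in a weighted $L^2$ space in~$\xi$, so the Fubini-type exchange must be justified. A cleaner alternative is to mollify $\tilde u_\delta$ in~$x$, converting the distributional equation into a classical distributional ODE on~$\R$ pointwise in~$\xi$, apply hypoellipticity there, and then remove the mollification.
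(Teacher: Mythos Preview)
Your approach is fundamentally different from the paper's. The paper argues by duality: given an arbitrary test function $\psi$, it solves the adjoint final-value problem $\partial_{tt}\varphi - K_\delta^*[\varphi] = \psi$ with $\varphi(T,\cdot) = \partial_t\varphi(T,\cdot) = 0$ via Duhamel's formula, then corrects $\varphi$ by spatial cutoffs $\zeta_n(|x|)$ to obtain admissible test functions $\varphi_n \in \Dcal(\R\times\R^d;\R^d)$, and finally shows $\langle u_\delta, \psi\rangle = \langle u_\delta, \partial_{tt}\varphi_n - K_\delta^*[\varphi_n]\rangle + o(1) = o(1)$ using the definition of distributional solution and dominated convergence. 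Your route instead exploits the Fourier-multiplier structure directly: diagonalize in $\xi$, reduce to the constant-coefficient ODE $y'' + \omega_\delta(\xi)^2 y = 0$ with support in $[0,\infty)$, and conclude by hypoellipticity plus ODE uniqueness.

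Your argument is more elementary once the slicewise reduction is justified, and it makes the mechanism transparent. The paper's duality argument is heavier but more robust: it does not rely on the operator being a Fourier multiplier and transfers to variable-coefficient or nonlinear settings. Two caveats on your sketch. First, $\omega_\delta$ is \emph{not} bounded --- it grows like $|\xi|^\alpha$ by Lemma~\ref{lem:omega_delta_low_high_frequencies} --- though this is irrelevant for a local-in-$\xi$ argument. Second, the slicewise reduction you correctly flag as the main obstacle does require care: even after mollifying in $x$, passing from the distributional identity on $\R\times\R^d$ to a pointwise-in-$\xi$ ODE needs a Fubini step together with separability of $C_c^\infty(\R)$ (to get the ODE for a.e.\ $\xi$ and \emph{all} $\phi$ simultaneously, not for a $\phi$-dependent full-measure set). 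Once that is in place, hypoellipticity of $\partial_{tt} + \omega_\delta(\xi)^2$ and the vanishing on $(-\infty,0)$ finish the proof as you describe.
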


We are now in a position to state precisely the comparison results contained in this paper. 
The first result is Theorem~\ref{thm:delta_to_0} below.
It gives a new perspective on the wave equation, which can be viewed as a limit case of the more general nonlocal peridynamics model as the horizon~$\delta$ goes to zero, \ie, as the scale of the internal interactions is small compared to the scale of the material body.

In the next statement, the speed of propagation of waves~$\gamma$ turns out to be the slope at the origin of the peridynamic  dispersion relation~$\omega_\delta$ in the radial direction, see Figure~\ref{fig:dispersion_relation} and formula~\eqref{eq:omega_delta_low_frequencies} below.
 
% To appreciate the role played by the horizon~$\delta$ in peridynamics, we study the
% asymptotics of the (unique) solution~$u_\delta$ constructed via Theorems~\ref{thm:existence} and~\ref{thm:uniqueness} and we relate it to the classical wave equation, according to the following result:

\begin{theorem} \label{thm:delta_to_0}
  Let~$s \in \R$. 
  Let~$u_0 \in H^s(\R^d;\R^d)$ and~$v_0 \in H^{s-\alpha}(\R^d;\R^d)$. 
  
Let~$u_\delta \in C([0,+\infty);H^s(\R^d;\R^d)) \cap C^1((0,+\infty);H^{s-\alpha}(\R^d;\R^d))$ be the unique distributional solution to the linear peridynamics model~\eqref{eq:linear_peridynamics} with initial data~$(u_0,v_0)$ provided by Theorems~\ref{thm:existence} and~\ref{thm:uniqueness}. 
  
  Let~$u \in C([0,+\infty);H^{s}(\R^d;\R^d)) \cap C^1((0,+\infty);H^{s-1}(\R^d;\R^d))$ be the unique distributional solution to the wave equation
\[
\begin{cases} \label{eq:wave}
  \de_{tt} u(t,x) - \gamma^2 \Delta u(x) = 0 \, , &   (t,x) \in (0,+\infty) \x \R^d \, , \\
  u(0,x) = u_0(x) \, , \quad \de_t u(0,x) = v_0(x) \, , &  x \in \R^d \, .
\end{cases} 
\]
  
  Then, for every~$T \in [0,+\infty)$, 
  \[
  \lim_{\delta\to0}
  \sup_{t \in [0,T]} \Big( \| u_\delta(t,\cdot) - u(t,\cdot) \|_{H^s(\R^d;\R^d)} + \| \de_t u_\delta(t,\cdot) - \de_t u(t,\cdot) \|_{H^{s-1}(\R^d;\R^d)} \Big) =0\, .
  \]
\end{theorem}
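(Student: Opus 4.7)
The plan is to work entirely on the Fourier side. By Theorem~\ref{thm:existence} (applied also, in its standard incarnation, to the wave equation), both $\hat{u}_\delta(t,\cdot)$ and $\hat{u}(t,\cdot)$ are given by Fourier multipliers acting on the initial data, with symbols $\cos(\omega_\delta(\xi)t)$, $\sin(\omega_\delta(\xi)t)/\omega_\delta(\xi)$ for the peridynamics solution and $\cos(\gamma|\xi|t)$, $\sin(\gamma|\xi|t)/(\gamma|\xi|)$ for the wave solution. The difference $\hat{u}_\delta-\hat{u}$ therefore decomposes into a ``cosine difference'' multiplying $\hat{u}_0$ and a ``sinc difference'' multiplying $\hat{v}_0$; by Plancherel, the square of the $H^s$ norm of $u_\delta(t,\cdot)-u(t,\cdot)$ is the weighted $L^2$ integral of the modulus squared of this Fourier-side quantity against $(1+|\xi|^2)^s$.

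The analytic input from Section~\ref{sec:peridynamics_operator} is twofold: the pointwise convergence $\omega_\delta(\xi)\to\gamma|\xi|$ as $\delta\to0$ (uniform on compact subsets of $\R^d$), and a lower bound $\omega_\delta(\xi)\gtrsim(1+|\xi|^2)^{\alpha/2}$ reflecting that $K_\delta$ has the order of a fractional Laplacian of exponent $2\alpha$. For the Lipschitz-in-$\omega$ control of the Fourier symbols, I would use $|\cos a-\cos b|\le|a-b|$ together with the identity $\sin(at)/a-\sin(bt)/b=\int_0^t[\cos(a\tau)-\cos(b\tau)]\,d\tau$, which give bounds of order $t\,\bigl|\omega_\delta(\xi)-\gamma|\xi|\bigr|$ and $t^2\,\bigl|\omega_\delta(\xi)-\gamma|\xi|\bigr|$, respectively.

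Given $\varepsilon>0$, I split the integral over $\xi$ into a low-frequency part $\{|\xi|\le M\}$ and a high-frequency tail $\{|\xi|>M\}$. On the tail I use the uniform (in $\delta$ and in $t\in[0,T]$) bounds $|\cos(\omega_\delta t)-\cos(\gamma|\xi|t)|\le 2$ and $|\sin(\omega_\delta t)/\omega_\delta|^2+|\sin(\gamma|\xi|t)/(\gamma|\xi|)|^2\le C(1+|\xi|^2)^{-\alpha}$, exploiting $|\sin x/x|\le 1/|x|$ together with the lower bound on $\omega_\delta$ above. Combined with $u_0\in H^s$ and $v_0\in H^{s-\alpha}$, these make the tail contribution smaller than $\varepsilon$ for $M$ large enough, uniformly in $\delta$ and $t\in[0,T]$. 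On the low-frequency region, the uniform convergence $\omega_\delta\to\gamma|\cdot|$ on the compact set $\{|\xi|\le M\}$ combined with the Lipschitz-in-$\omega$ estimates forces the multiplier differences to vanish uniformly in $t\in[0,T]$, so the low-frequency integral tends to $0$ as $\delta\to0$.

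The $H^{s-1}$ bound for $\partial_t u_\delta-\partial_t u$ follows the same template: the symbols multiplying $\hat u_0$ are now $-\omega_\delta\sin(\omega_\delta t)$ and $-\gamma|\xi|\sin(\gamma|\xi|t)$, the symbols multiplying $\hat v_0$ are the analogous cosines, and the tail is dominated using $\omega_\delta^2\le C(1+|\xi|^2)^\alpha\le C(1+|\xi|^2)$ against the weight $(1+|\xi|^2)^{s-1}$, while the $\hat v_0$ contribution is handled by the embedding $H^{s-\alpha}\hookrightarrow H^{s-1}$, valid because $\alpha\le 1$. The main obstacle I anticipate is that $\omega_\delta(\xi)$ saturates at order $|\xi|^\alpha$ whereas $\gamma|\xi|$ grows linearly, so a naive pointwise domination of $\bigl|\omega_\delta(\xi)-\gamma|\xi|\bigr|$ by a $\delta$-independent integrable majorant is not available; this is precisely why the low/high-frequency splitting is essential and why the $H^{s-\alpha}$ regularity of $v_0$ enters the argument at exactly the right order.
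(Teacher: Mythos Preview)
Your proposal is correct and follows essentially the same route as the paper's proof: both work on the Fourier side, decompose $\hat u_\delta-\hat u$ into a cosine piece times $\hat u_0$ and a sinc piece times $\hat v_0$, control the low-frequency part via Lipschitz continuity of the multipliers in $\omega$ together with $\omega_\delta(\xi)\to\gamma|\xi|$ (the paper uses the quantitative Lemma~\ref{lem:difference_omega_delta_gamma_xi}), and control the high-frequency tail by the $\delta$-uniform bounds $\omega_\delta(\xi)\le\gamma|\xi|$ and $\omega_\delta(\xi)\gtrsim|\xi|^\alpha$ (for $|\xi|\ge 1$) against the assumed integrability of $(1+|\xi|^2)^{s}|\hat u_0|^2$ and $(1+|\xi|^2)^{s-\alpha}|\hat v_0|^2$. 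The paper phrases the last step as Dominated Convergence with a fixed split at $|\xi|=1$, whereas you phrase it as an $\varepsilon$--$M$ tail argument; these are equivalent. Your integral identity $\sin(at)/a-\sin(bt)/b=\int_0^t(\cos a\tau-\cos b\tau)\,d\tau$ yields the same $t^2|\omega_\delta-\gamma|\xi||$ bound that the paper obtains from the Lipschitz continuity of $z\mapsto\sin z/z$.

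One small slip: the intermediate inequality $\omega_\delta^2\le C(1+|\xi|^2)^\alpha$ is \emph{not} uniform in $\delta$ (the high-frequency constant $\lambda$ in~\eqref{eq:omega_delta_high_frequencies} scales like $\delta^{-(1-\alpha)}$), but you only use the weaker $\omega_\delta^2\le C(1+|\xi|^2)$, which follows from $\omega_\delta(\xi)\le\gamma|\xi|$ and is uniform, so the argument is unaffected. Also be aware that the $\delta$-uniform lower bound $\omega_\delta(\xi)\gtrsim|\xi|^\alpha$ for $|\xi|\ge1$, which both you and the paper need for the tail domination, is not stated explicitly in Section~\ref{sec:peridynamics_operator}; it follows from the scaling identity $\omega_\delta(\xi)=\delta^{-1}\omega_1(\delta\xi)$ together with $\omega_1(\eta)\gtrsim\min(|\eta|,|\eta|^\alpha)$.
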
 

We underline the importance of taking the initial data in the correct spaces, having a regularity gap of order~$\alpha$ between displacement and velocity. We comment more in detail about this caveat in Remark~\ref{rem:initial_data}. 

The second result establishes that the solutions to the peridynamics model with initial data having frequencies supported in a ball of radius~$R$ are close to solutions to the classical wave equation. 
The gap in Sobolev norm between the two solutions in a time interval~$[0,T]$ is (for a given~$L^2$ norm of the initial data) of\footnote{This estimate is meaningful for small~$T \delta R^2$. If~$T \delta R^2$ is large, the~$H^s$ norm on the left-hand side of~\eqref{star456789} is bounded uniformly in~$T$ and the estimate is trivial.} order~$T \delta R^2$.
This yields a quantitative criterion to compare the two models.

\begin{theorem} \label{thm:low_frequencies}
 Let~$\delta > 0$ and~$R \in (0,1)$. Let~$u_0$ and~$v_0$ be such that $\supp(\hat u_0) \subset \{|\xi| \leq R\}$ and $\supp(\hat v_0) \subset \{|\xi| \leq R\}$.
 
 Let~$u_\delta$ be the unique distributional solution to the linear peridynamics model~\eqref{eq:linear_peridynamics} with initial data~$(u_0,v_0)$ provided by Theorems~\ref{thm:existence} and~\ref{thm:uniqueness}. 
 
 Let~$u$ be the unique distributional solution to the wave equation~\eqref{eq:wave} with initial data~$(u_0,v_0)$. 
 
 Then, for every~$s \geq 0$ and for every~$T \in [0,+\infty)$,
  \begin{equation}\label{star456789}
  \sup_{t \in [0,T]}   \| u_\delta(t,\cdot) - u(t,\cdot) \|_{H^s(\R^d;\R^d)} \leq C T \delta R^2 \big( \|u_0\|_{L^2(\R^d;\R^d)} + \|v_0\|_{L^2(\R^d;\R^d)} \big)  \, ,
  \end{equation}
  and, for every~$s \geq 1$,
  \[
  \sup_{t \in [0,T]}  \| \de_t u_\delta(t,\cdot) - \de_t u(t,\cdot) \|_{H^{s-1}(\R^d;\R^d)} \leq  C T \delta R^2 \big( \|u_0\|_{L^2(\R^d;\R^d)} + \|v_0\|_{L^2(\R^d;\R^d)} \big)  \, .
  \]
\end{theorem}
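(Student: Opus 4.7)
The plan is to exploit the explicit Fourier representation for both solutions. By Theorem~\ref{thm:existence},
\[
\hat u_\delta(t,\xi) = \cos(\omega_\delta(\xi) t) \hat u_0(\xi) + \frac{\sin(\omega_\delta(\xi) t)}{\omega_\delta(\xi)} \hat v_0(\xi),
\]
and the solution to the wave equation~\eqref{eq:wave} admits the same formula with $\omega_\delta$ replaced by $\gamma|\xi|$. By Plancherel, together with the fact that $\hat u_0$ and $\hat v_0$ are supported in $\{|\xi| \le R\}$ with $R \le 1$, the weight $(1+|\xi|^2)^s \le 2^s$ on the integration domain, so the $H^s$ norm of $u_\delta(t,\cdot) - u(t,\cdot)$ is controlled by the $L^2$ norm of the difference of the Fourier multipliers applied to $\hat u_0$ and $\hat v_0$.

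The crux of the proof is the pointwise dispersion estimate
\[
|\omega_\delta(\xi) - \gamma|\xi|| \le C \delta R^2 \quad \text{for every } |\xi| \le R.
\]
Using the integral formula for $\omega_\delta(\xi)^2$ derived in Section~\ref{sec:peridynamics_operator}, a Taylor expansion of $1 - \cos(\delta z \cdot \xi)$ gives $0 \le \gamma^2|\xi|^2 - \omega_\delta(\xi)^2 \le C\delta^2|\xi|^4$. Dividing by $\omega_\delta + \gamma|\xi| \ge \gamma|\xi|$ yields $|\omega_\delta - \gamma|\xi|| \le C\delta^2|\xi|^3$ in the regime $\delta|\xi| \le 1$, while in the complementary regime the trivial bound $\omega_\delta \le \gamma|\xi|$ gives $|\omega_\delta - \gamma|\xi|| \le \gamma|\xi|$. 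In both subcases the resulting bound is majorized by $C\delta R^2$ when $|\xi| \le R$, since $\delta^2 |\xi|^3 \le \delta R^2$ whenever $\delta|\xi| \le 1$, and $\gamma |\xi| \le \gamma \delta R^2$ whenever $\delta|\xi| > 1$ and $|\xi| \le R$.

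With the dispersion estimate at hand, I bound the Fourier multipliers pointwise by elementary trigonometry: $|\cos(\omega_\delta t) - \cos(\gamma|\xi|t)| \le t|\omega_\delta - \gamma|\xi|| \le CT\delta R^2$, and, via $\sin(a t)/a = \int_0^t \cos(as)\,\d s$,
\[
\Big|\frac{\sin(\omega_\delta t)}{\omega_\delta} - \frac{\sin(\gamma|\xi|t)}{\gamma|\xi|}\Big| \le \int_0^t s \, |\omega_\delta - \gamma|\xi|| \, \d s,
\]
which is at most $Ct^2\delta R^2$ (and trivially at most $2t$). Combining these estimates with Plancherel, together with the trivial $H^s$ bound observed in the footnote (which handles the regime where $T\delta R^2$ is not small), yields~\eqref{star456789}. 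For the velocity, one differentiates the Fourier representation in $t$ and applies the same strategy to the symbols $\omega_\delta \sin(\omega_\delta t) - \gamma|\xi|\sin(\gamma|\xi|t)$ and $\cos(\omega_\delta t) - \cos(\gamma|\xi|t)$; the extra factor of $|\xi|$ produced by $\de_t$ is exactly absorbed by the loss from $H^s$ to $H^{s-1}$.

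The main obstacle is the dispersion estimate itself, which requires a unified treatment combining the Taylor expansion in the regime $\delta|\xi| \le 1$ with the saturated bound $\omega_\delta \le \gamma|\xi|$ in the regime $\delta|\xi|>1$, and which hinges on the specific scaling $\delta^{2(1-\alpha)}$ in the definition of $K_\delta$ that produces a finite limit $\gamma^2|\xi|^2$ for the symbol as $\delta\to 0$. Once the dispersion estimate is in place, the remainder of the proof is a routine Plancherel computation combined with the support hypothesis on $(\hat u_0,\hat v_0)$.
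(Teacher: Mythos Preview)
Your strategy coincides with the paper's: write $\hat u_\delta - \hat u$ via~\eqref{eq:Fourier_solution} and~\eqref{eq:Fourier_wave}, restrict to $\{|\xi|\le R\}$ (where $(1+|\xi|^2)^s\le 2^s$ since $R\le 1$), and bound the cosine/sinc multiplier differences pointwise using the dispersion estimate, then Plancherel.

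The paper streamlines two of your steps. First, it invokes Lemma~\ref{lem:difference_omega_delta_gamma_xi} directly, which already gives $|\omega_\delta(\xi)-\gamma|\xi||\le C\delta|\xi|^2$ for \emph{all} $\xi$ (via $|\sqrt a-\sqrt b|\le\sqrt{|a-b|}$ applied to $|\omega_\delta^2-\gamma^2|\xi|^2|\le C\delta^2|\xi|^4$); your two-regime split on whether $\delta|\xi|\le 1$ is therefore unnecessary. Second, for the sinc multiplier the paper bounds $\big|\tfrac{\sin(\omega_\delta t)}{\omega_\delta}-\tfrac{\sin(\gamma|\xi|t)}{\gamma|\xi|}\big|$ directly by $|\omega_\delta t-\gamma|\xi|t|$ rather than by your integral estimate $\int_0^t s\,|\omega_\delta-\gamma|\xi||\,\d s$, thereby arriving at the stated linear-in-$T$ bound without the detour through the footnote. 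Otherwise the argument for $\partial_t u_\delta-\partial_t u$ is exactly as you outline.
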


To conclude our comparison between the two models, we analyze the energy
of the solution. Let
\[
E^{\gamma^2\Delta}[u](t) := \frac{1}{2} \int_{\R^d} |\de_t u(t,x)|^2 \, \d x + \frac{\gamma^2}{2} \int_{\R^d} |\nabla u(t,x)|^2 \, \d x  
\]
denote the total energy conserved by solutions to the wave equation~\eqref{eq:wave}.
Obviously, the energy~$E^{\gamma^2\Delta}$ is not conserved by solutions to the peridynamics model~\eqref{eq:linear_peridynamics}.
Nevertheless, in some sense, the energy~$E^{\gamma^2\Delta}$ is almost conserved by solutions to the peridynamics model, as stated precisely in the following result.

\begin{proposition}\label{prop:almost energy conservation}
  Let~$\e > 0$. Then there exists~$\eta_0 > 0$ such that for~$\delta R < \eta_0$ the following holds true.

  Let~$u_0$ and~$v_0$ be such that $\supp(\hat u_0) \subset \{|\xi| \leq R\}$ and $\supp(\hat v_0) \subset \{|\xi| \leq R\}$. 
  Let~$u_\delta$ be the unique distributional solution to the linear peridynamics model~\eqref{eq:linear_peridynamics} with initial data~$(u_0,v_0)$ provided by Theorems~\ref{thm:existence} and~\ref{thm:uniqueness}.
  
  Then, for every~$t \in [0,+\infty)$, we have that
  \[
  |E^{\gamma^2\Delta}[u_\delta](t) - E^{\gamma^2\Delta}[u_\delta](0)| \leq \e E^{\gamma^2\Delta}[u_\delta](0) \, .
  \]
\end{proposition}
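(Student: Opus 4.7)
The plan is to work entirely on the Fourier side, exploiting the explicit representation~\eqref{eq:Fourier_solution} of $u_\delta$ together with Plancherel to reduce the energy of $u_\delta$ to a fibered integral over $\xi$, and then to compare the integrand with what one would obtain if the peridynamics symbol $\omega_\delta(\xi)$ were replaced by the wave symbol $\gamma|\xi|$. Since the Fourier supports of $u_0$ and $v_0$ are contained in $\{|\xi|\leq R\}$, that comparison only needs to be quantitative for $|\xi|\leq R$, and there it is controlled by $\delta R$.

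Concretely, I would start from
\[
2 E^{\gamma^2\Delta}[u_\delta](t) = \int_{\R^d} |\widehat{\de_t u_\delta}(t,\xi)|^2 \, \d\xi + \gamma^2 \int_{\R^d} |\xi|^2 |\hat u_\delta(t,\xi)|^2 \, \d\xi,
\]
substitute \eqref{eq:Fourier_solution} and its $t$-derivative, expand each square, and regroup using $\sin^2(\omega_\delta t)+\cos^2(\omega_\delta t)=1$. This yields the decomposition
\[
2 E^{\gamma^2\Delta}[u_\delta](t) = \int_{\R^d} \bigl( \gamma^2|\xi|^2 |\hat u_0|^2 + |\hat v_0|^2 \bigr) \d\xi + \mathcal R(t),
\]
where the time-independent part is exactly $2 E^{\gamma^2\Delta}[u_\delta](0)$ and $\mathcal R(t)$ is a sum of three integrals whose integrands read, respectively, $(\omega_\delta^2-\gamma^2|\xi|^2)\sin^2(\omega_\delta t)\,|\hat u_0|^2$, $\bigl((\gamma^2|\xi|^2-\omega_\delta^2)/\omega_\delta^2\bigr)\sin^2(\omega_\delta t)\,|\hat v_0|^2$, and $\bigl((\gamma^2|\xi|^2-\omega_\delta^2)/\omega_\delta\bigr)\sin(2\omega_\delta t)\,\mathrm{Re}(\hat u_0\cdot\overline{\hat v_0})$. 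By construction $\mathcal R$ vanishes identically when $\omega_\delta(\xi)\equiv\gamma|\xi|$, so each term already carries the prefactor that will make it small.

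The second step is to invoke the low-frequency analysis of $\omega_\delta$ carried out in Section~\ref{sec:peridynamics_operator}: after the rescaling $y\mapsto \delta y$ in the defining integral of $\omega_\delta^2$, the symbol displays as $\delta^{-2}g(\delta\xi)$ with $g(\eta)=\gamma^2|\eta|^2+o(|\eta|^2)$ (this is the content of \eqref{eq:omega_delta_low_frequencies}), so there is a modulus $\sigma(\eta)\to 0$ as $\eta\to 0$ with
\[
\sup_{|\xi|\leq R} \Big| \frac{\omega_\delta(\xi)^2}{\gamma^2 |\xi|^2} - 1 \Big| \leq \sigma(\delta R).
\]
On the Fourier support of the data, this bound controls the three prefactors pointwise by $\sigma(\delta R)\,\gamma^2|\xi|^2$, $\sigma(\delta R)/(1-\sigma(\delta R))$, and $\sigma(\delta R)\,\gamma|\xi|/\sqrt{1-\sigma(\delta R)}$, respectively. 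After one application of AM-GM to the cross integral, in the form $2\gamma|\xi||\hat u_0||\hat v_0|\leq \gamma^2|\xi|^2|\hat u_0|^2+|\hat v_0|^2$, everything is reduced to $\int \gamma^2|\xi|^2|\hat u_0|^2+|\hat v_0|^2\, \d\xi = 2 E^{\gamma^2\Delta}[u_\delta](0)$, and we obtain
\[
|\mathcal R(t)| \leq C\,\sigma(\delta R)\cdot 2 E^{\gamma^2\Delta}[u_\delta](0)
\]
uniformly in $t\in[0,+\infty)$. Choosing $\eta_0$ with $C\,\sigma(\eta_0)\leq \varepsilon$ then finishes the proof.

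The only genuinely nontrivial ingredient is the quantitative low-frequency expansion of $\omega_\delta$ on $\{|\xi|\leq R\}$ with control depending only on the product $\delta R$, but this is already delivered by Section~\ref{sec:peridynamics_operator}; the remainder is Plancherel, a short algebraic rearrangement, and a single AM-GM. I therefore do not foresee a substantive obstacle, only the bookkeeping needed to keep track of the three similarly structured pieces of $\mathcal R(t)$ and to verify that the modulus $\sigma$ indeed depends on $\delta$ and $R$ only through $\delta R$, which the scaling of $\omega_\delta^2$ makes transparent.
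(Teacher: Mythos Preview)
Your proposal is correct. Both your argument and the paper's reduce the problem to controlling $\omega_\delta^2(\xi)-\gamma^2|\xi|^2$ on $\{|\xi|\le R\}$, and both use the scaling $\omega_\delta^2(\xi)=\delta^{-2}g(\delta\xi)$ (implicitly in the paper, explicitly in your write-up) to see that the smallness depends only on $\delta R$. The routes to that reduction differ, however.

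The paper does not expand the trigonometric expressions. Instead it uses the conservation of the peridynamics energy $E^{K_\delta}[u_\delta]$ to write
\[
E^{\gamma^2\Delta}[u_\delta](t)-E^{\gamma^2\Delta}[u_\delta](0)
=\Big(\tfrac12[u_0]_{\W_\delta^\alpha}^2-\tfrac{\gamma^2}{2}\|\nabla u_0\|_{L^2}^2\Big)
-\Big(\tfrac12[u_\delta(t)]_{\W_\delta^\alpha}^2-\tfrac{\gamma^2}{2}\|\nabla u_\delta(t)\|_{L^2}^2\Big),
\]
so that the difference is already expressed as two copies of the single quantity $\int(\omega_\delta^2-\gamma^2|\xi|^2)|\hat u_\delta|^2\,\d\xi$, evaluated at times $0$ and $t$. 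This avoids your three-term remainder and the AM--GM step entirely; the price is that one must then bound $\int|\xi|^2|\hat u_\delta(t,\xi)|^2\,\d\xi$ in terms of the initial data, which requires the lower bound $\omega_\delta^2\ge\tfrac{\gamma^2}{2}|\xi|^2$ on the support. Your approach trades this structural shortcut for a direct algebraic expansion: more bookkeeping, but no appeal to the auxiliary conserved quantity $E^{K_\delta}$. Either way the analytic input is the same, namely $|\omega_\delta^2/(\gamma^2|\xi|^2)-1|\le\sigma(\delta R)$ on the Fourier support of the data.
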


The paper is organized as follows. In Section~\ref{yturieowghfdj6758493}
we present the basic notation and the functional setting used
throughout the manuscript. 
Section~\ref{sec:peridynamics_operator} is devoted to the peridynamics
operator and its properties.
In Section~\ref{67r483ebcnx98765yhbnetuku} we provide some well-posedness results
for the linear peridynamics model
and present the proofs of Theorems~\ref{thm:existence} and~\ref{thm:uniqueness}. 
Section~\ref{deltasmall76859403} contains the proof of
Theorem~\ref{thm:delta_to_0}, and Section~\ref{5647382prooft362y584} the proofs of Theorem~\ref{thm:low_frequencies} and
Proposition~\ref{prop:almost energy conservation}.

\section{Basic notation and functional setting}\label{yturieowghfdj6758493}

\subsection{Test functions and distributions}
In this paper, the~$d$-dimensional Lebesgue measure in~$\R^d$ is denoted by~$\L^d$ and the~$(d-1)$-dimensional Hausdorff measure is denoted by~$\H^{d-1}$. 

We also let $\Dcal(\R^n;\R^m) = C^\infty_c(\R^n;\R^m)$ be the space of smooth functions with compact support and $\Dcal'(\R^n;\R^m)$ be the space of distributions. We denote by $\langle \cdot, \cdot \rangle$ the duality pairing between~$\Dcal'(\R^n;\R^m)$ and~$\Dcal(\R^n;\R^m)$.

Moreover, we let~$\Schwartz(\R^d;\R^d)$ the Schwartz space of rapidly decreasing vector fields and by~$\Schwartz'(\R^d;\R^d)$ its topological dual given by the space of tempered distributions.

\subsection{Fourier transform}  \label{ihfgbvk9ikF}

The componentwise Fourier transform of a rapidly decreasing vector field~$\varphi \in \Schwartz(\R^d;\R^d)$ is defined by
\[ 
  \F[\varphi](\xi) = \widehat{\varphi}(\xi) = \frac{1}{(2\pi)^{d/2}}\int_{\R^d} \varphi(x) e^{-i x \cdot \xi} \, \d x \, , \quad \text{for every } \xi \in \R^d \, .
\]
The inverse Fourier transform of~$\psi \in \Schwartz(\R^d;\C^d)$ is given by
\[
  \F^{-1}[\psi](x) = \frac{1}{(2\pi)^{d/2}}\int_{\R^d} \psi(\eta) e^{i x \cdot \eta} \, \d \eta = \F[\psi](\xi) \Big|_{\xi = -x} \, , \quad \text{for every } x \in \R^d \, .
\]

The definition of Fourier transform is extended to tempered distributions by duality, \ie, for every~$u \in \Schwartz'(\R^d;\R^d)$ we define the Fourier transform~$\widehat{u} \in \Schwartz'(\R^d;\C^d)$ by\footnote{The duality pairing $\langle \varphi, \psi \rangle$ is intended as an extension of $\int_{\R^d} \sum_{i=1}^d \varphi_i \psi_i \, \d x$ for~$\varphi, \psi \in \Schwartz(\R^d;\C^d)$.} 
\[ 
  \langle \widehat{u}, \psi \rangle = \langle u, \widehat{\psi} \rangle \, , \quad \text{for every } \psi \in \Schwartz(\R^d;\C^d) \, .
\]
A tempered distribution~$u \in \Schwartz'(\R^d;\C^d)$ is real-valued, \ie, $u \in \Schwartz'(\R^d;\R^d)$, if taking the complex conjugation gives 
\[
\langle u, \psi \rangle = \overline{\langle u, \overline \psi \rangle} \, , \quad \text{for every } \psi \in \Schwartz(\R^d;\C^d) \, .
\]
% (This condition generalizes the condition for $u \in \Schwartz(\R^d;\C^d)$ and $\psi \in \Schwartz(\R^d;\C^d)$:
% \[
%   \begin{split}
%     \langle u, \psi \rangle & = \overline{\overline{\langle u, \psi \rangle}} = \overline{   \int_{\R^d} \sum_{i=1}^d u_i(x) \overline{\psi_i(x)} \, \d x  } = \overline{\langle u, \overline \psi \rangle} \, . )
%   \end{split}
% \]
We recall that~$u$ is real-valued if and only if~$\widehat{u} \in \Schwartz'(\R^d;\C^d)$ is Hermitian, in the sense that 
\[ \label{eq:Hermitian_Fourier_transform}
  \langle \widehat{u}, \psi(-\cdot) \rangle = \overline{ \langle \hat{u},  \overline{\psi} \rangle } \, , \quad \text{for every } \psi \in \Schwartz(\R^d;\C^d) \, .
\]
% Indeed,
% \[
%   \begin{split}
%     \langle \widehat{u}, \psi(-\cdot) \rangle & = \langle u, \widehat{\psi(-\cdot)} \rangle =  \langle u, \overline{\hat{\overline{\psi}}} \rangle = \overline{ \langle u,  \hat{\overline{\psi}} \rangle } = \overline{ \langle \hat{u},  \overline{\psi} \rangle } \, .
%   \end{split}
% \]

For functions~$\varphi(t,x)$ we let~$\widehat{\varphi}(t,\xi) = \F[\varphi(t,\cdot)](\xi)$ denote the Fourier transform with respect to the spatial variable~$x$.  

\subsection{Fractional Sobolev spaces} \label{0djofsvnk}
A classical reference for fractional Sobolev spaces is~\cite{AdaFou03}. 
In this paper we will work with the approach followed in~\cite{DNPalVal12}. We recall here the definition of the space~$H^s$ for any~$s \in \R$, introducing the notation that will be used in the rest of the paper.
\medskip

\underline{$H^s$ for $s \in (0,1)$}. Given~$s \in (0,1)$, we let $H^{s}(\R^d;\R^d)$ be the fractional Sobolev space of order~$s$ of vector fields in~$\R^d$. 
We recall that 
\[
H^s(\R^d;\R^d) = \Big\{ u \in L^2(\R^d;\R^d) \, :  \, \frac{|u(x) - u(y)|}{|x-y|^{\frac{d}{2} + s}} \in L^2(\R^d \x \R^d) \Big\} \, .  
\]
It is endowed with the norm defined by 
\[ 
  \|u\|_{H^s} = \left( \|u\|_{L^2}^2 + [u]_{H^s}^2 \right)^\frac{1}{2} \, ,
\]
where 
\[ 
[u]_{H^s} = \left( \int_{\R^d}\int_{\R^d} \frac{|u(x) - u(y)|^2}{|x-y|^{d + 2s}} \, \d x \, \d y \right)^\frac{1}{2}
\]
is the so-called Gagliardo seminorm.
The norm on $H^s(\R^d;\R^d)$ is induced by an inner product that makes~$H^s(\R^d;\R^d)$ a Hilbert space.
\medskip

\underline{$H^s$ for $s > 1$}. If~$s > 1$ and~$s$ is not an integer, then~$s = m + \sigma$, where~$m \in \N$ and~$\sigma \in (0,1)$. 
Then the space~$H^s(\R^d;\R^d)$ is the space of vector fields~$u \in H^m(\R^d;\R^d)$ such that $\D^\mathbf{k} u \in H^\sigma(\R^d;\R^d)$ for every multi-index~$\mathbf{k}$ with~$|\mathbf{k}| = m$.

This space is endowed with the norm defined by 
\[ 
  \|u\|_{H^s} = \left(\|u\|_{H^{m}}^2 + \sum_{|\mathbf{k}| = m}  [\D^\mathbf{k} u]_{H^\sigma}^2 \right)^\frac{1}{2}\, ,
\]
which is induced by an inner product that makes $H^s(\R^d;\R^d)$ a Hilbert space.
\medskip

\underline{$H^s$ for $s < 0$}. If~$s < 0$, then~$H^s(\R^d;\R^d)$ is the space of distributions given by the dual of~$H^{-s}(\R^d;\R^d)$.  
\medskip

\underline{$H^s$ and Fourier transform}. 
For every~$s \in \R$, the Hilbert space~$H^s(\R^d;\R^d)$ is equivalently characterized as the subspace of tempered distributions 
\[  \label{eq:Hs_in_Fourier_space}
  H^s(\R^d;\R^d) = \big\{ u \in \Schwartz'(\R^d;\R^d) \, : \, (1 + |\xi|^2)^{\frac{s}{2}} |\widehat{u}(\xi)| \in L^2(\R^d) \big\} \, .
\]
Moreover, we recall that
\[ \label{eq:Hs_norm_equivalence}
  \frac{1}{C}\|(1 + |\xi|^2)^{\frac{s}{2}} \widehat{u}\|_{L^2} \leq \|u\|_{H^s} \leq C \|(1 + |\xi|^2)^{\frac{s}{2}} \widehat{u}\|_{L^2} \, ,
\]
see, \eg,~\cite[Proposition~3.4]{DNPalVal12} for the case~$s \in (0,1)$.

\section{The peridynamics operator} \label{sec:peridynamics_operator}

In this section we define the main ingredient of the peridynamics model, \ie, the peridynamics operator, and we collect some of its properties used throughout the paper.

\subsection{Peridynamics horizon} 
To model the size of the interaction region for bonds, we introduce a cut-off function~$\chi \in L^\infty(\R)$ with 
\[
\supp(\chi) \subset [-1,1] \, , \quad  0 \leq \chi \leq 1 \, , \quad \chi \equiv 1 \text{ on } \left[-\frac{1}{2},\frac{1}{2}\right] \, .
\] 
The assumption~$\chi \equiv 1$ in~$\big[-\frac{1}{2},\frac{1}{2}\big]$ might be relaxed by requiring that~$\chi \equiv 1$ on a neighborhood of the origin or that~$\chi(0) = 1$ and~$\chi$ is of class~$C^3$ in a neighborhood of~$0$. We prefer to keep the more restrictive assumption to make some computations easier to follow.

We let~$\delta > 0$ be a parameter modelling the size of the interaction region, named the \emph{peridynamics horizon}. This parameter measures the distance of the nonlocal bonds in the peridynamics model.
 In this paper we are interested to the regime where~$\delta$ is a small parameter. 

For every~$\delta > 0$ we define the function 
\[ \label{eq:def_chi_delta}
\chi_\delta(y) := \chi\left(\frac{|y|}{\delta}\right) \, , \quad \text{for every } y \in \R^d \, .
\] 
Throughout the paper, we shall often use the following properties of~$\chi_\delta$: 
\[ \label{eq:properties_chi_delta}
  \supp(\chi_\delta) \subset B_\delta  \, , \quad 0 \leq \chi_\delta \leq 1 \, , \quad  \chi_\delta \equiv 1 \text{ on } B_{\delta/2} \, .
\]

\begin{remark}
  In this paper we are considering the case where~$\chi$ is a radially symmetric function. 
  Under this assumption, some computations are simplified and we can focus on the main ideas in the proofs. 
  Moreover, it is due to this assumption that we obtain the Laplace operator in the limit as~$\delta \to 0$, see also Lemma~\ref{lem:limit_K_delta} below. 

  It should be noted that the results in this paper can be extended to the case where~$\chi$ is not radially symmetric, but only symmetric with respect to the origin.
  In that case the limit operator might be an anisotropic version of the Laplacian.  
\end{remark}
 
\subsection{Peridynamics operator} \label{SEC:OPE}
The main feature of the peridynamics model is the presence of a nonlocal operator~$K_\delta$ that will be used to model the internal force field in the material as a result of the displacement field~$u \colon \R^d \to \R^d$.
Some technical comments on the nonlocal operator~$K_\delta[u]$ introduced in~\eqref{eq:def_K_delta} are in order.

\begin{remark}
  Since $\supp(\chi_\delta) \subset B_\delta$, the integral in~\eqref{eq:def_K_delta} is actually an integral on~$B_\delta$, \ie, until the peridynamics horizon.
\end{remark}

\begin{remark}
  The operator~$K_\delta$ resembles a fractional Laplacian.
   However, the presence of the peridynamics horizon~$\delta$ makes it different from the classical fractional Laplacian. 
  The dependence of the nonlocal operator on~$\delta$ is crucial, as in this paper we are interested precisely in the behavior of the model depending on~$\delta$.
\end{remark}

\begin{remark}
Due to the singularity of the integrand, the integral in~\eqref{eq:def_K_delta} is meant in the principal value sense. 
  More precisely, 
  \[
    K_\delta[u](x) = - \lim_{\e \to 0} \frac{\kappa}{\delta^{2(1-\alpha)}} \int_{\R^d \setminus B_\e(0)} \chi_\delta(y) \frac{u(x) - u(x-y)}{|y|^{d + 2 \alpha}} \, \d y \, .
  \]
  In the particular case where~$\alpha \in (0,\frac{1}{2})$, then the principal value integral coincides with the standard integral for~$u \in \Schwartz(\R^d;\R^d)$, since the singularity cancels out. 
  Indeed, since $\supp(\chi_\delta) \subset B_\delta$, integrating in spherical coordinates we obtain that 
  \[ 
    \begin{split}
      & \int_{\R^d \setminus B_\e(0)} \chi_\delta(y) \frac{|u(x) - u(x-y)|}{|y|^{d + 2 \alpha}} \, \d y \leq \|\D u\|_{L^\infty} \int_{B_\delta(0) \sm B_\e(0)} \frac{1}{|y|^{d + 2 \alpha - 1}} \, \d y  \\
      & \quad  =  \|\D u\|_{L^\infty}  \int_{\e}^{\delta} \frac{1}{\rho^{2 \alpha}} \, \, \d \rho \leq  \frac{\|\D u\|_{L^\infty}}{1-2\alpha} \delta^{1-2 \alpha} \, ,
    \end{split}
    \]
    for every~$\e > 0$.
  \end{remark}

  An alternative expression for the singular integral in~\eqref{eq:def_K_delta} as a weighted second order difference quotient allows us to remove the singularity in the integrand at the origin and avoid using the principal value, when needed. The precise statement is given in the following lemma. 

  \begin{lemma} \label{lem:K_delta_alternative}
    Let~$u \in \Schwartz(\R^d;\R^d)$. Then,
    \[ \label{eq:K_delta_alternative}
      K_\delta[u](x) = \frac{1}{2} \frac{\kappa}{\delta^{2(1-\alpha)}} \int_{\R^d} \chi_\delta(y) \frac{u(x+y) + u(x-y) - 2 u(x)}{|y|^{d + 2 \alpha}} \, \d y \, , \quad \text{for every } x \in \R^d \, ,
    \]
    where the integral is meant in the classical sense and is finite. Moreover, $K_\delta[u] \in L^1(\R^d;\R^d)$.
  \end{lemma}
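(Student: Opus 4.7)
The plan is to derive the symmetric form by an odd-part cancellation in the principal value and then verify that, after symmetrization, the integrand is absolutely integrable so the principal value can be dropped; the $L^1$ bound is then a consequence of Fubini and a second-order Taylor estimate on $u$.

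More precisely, I would start from the definition
\[
K_\delta[u](x) = - \frac{\kappa}{\delta^{2(1-\alpha)}} \lim_{\e \to 0} \int_{\R^d \setminus B_\e(0)} \chi_\delta(y) \frac{u(x) - u(x-y)}{|y|^{d + 2 \alpha}} \, \d y
\]
and exploit that the domain $\R^d \setminus B_\e(0)$, the weight $\chi_\delta(y)/|y|^{d+2\alpha}$, and the measure $\d y$ are all invariant under $y \mapsto -y$ (the first since $\chi$ is radial). Changing variables, the same limit equals
\[
- \frac{\kappa}{\delta^{2(1-\alpha)}} \lim_{\e \to 0} \int_{\R^d \setminus B_\e(0)} \chi_\delta(y) \frac{u(x) - u(x+y)}{|y|^{d + 2 \alpha}} \, \d y,
\]
and averaging the two representations produces the integrand
\[
\chi_\delta(y) \, \frac{2u(x) - u(x+y) - u(x-y)}{|y|^{d+2\alpha}},
\]
which rearranges exactly to the right-hand side of~\eqref{eq:K_delta_alternative} with the overall factor $\tfrac12$.

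The next step is to check that this symmetrized integral is absolutely convergent, so the $\lim_{\e\to 0}$ becomes superfluous and the integral is classical. For $u \in \Schwartz(\R^d;\R^d)$, a one-dimensional Taylor expansion along the segments from $x$ to $x\pm y$ gives the standard second-order difference estimate
\[
|u(x+y) + u(x-y) - 2u(x)| \leq |y|^2 \sup_{|z|\leq |y|} |\D^2 u(x+z)|,
\]
so on $\supp(\chi_\delta) \subset B_\delta$ the integrand is pointwise bounded by $C\, \|\D^2 u\|_{L^\infty} |y|^{2-d-2\alpha}$, which is integrable in a neighbourhood of the origin precisely because $\alpha \in (0,1)$; outside a neighbourhood of $0$ the factor $|y|^{-d-2\alpha}$ is bounded and $u$ is integrable, so there is no issue. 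Hence the right-hand side of~\eqref{eq:K_delta_alternative} is well-defined as a Lebesgue integral and equals $K_\delta[u](x)$.

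Finally, for the $L^1$ claim I would apply Fubini together with the sharper, $L^1$-valued Taylor bound
\[
\int_{\R^d} |u(x+y) + u(x-y) - 2u(x)| \, \d x \leq |y|^2 \, \|\D^2 u\|_{L^1(\R^d;\R^{d\x d\x d})},
\]
obtained by writing $u(x+y)+u(x-y)-2u(x) = \int_0^1\!\int_{-s}^{s} y\cdot \D^2 u(x+ty)\,y\,\d t\,\d s$ and integrating in $x$ (a translation-invariant estimate). This yields
\[
\|K_\delta[u]\|_{L^1} \leq C\, \|\D^2 u\|_{L^1} \int_{B_\delta} |y|^{2-d-2\alpha} \, \d y < \infty,
\]
again because $\alpha<1$. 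The only delicate point is the first step: one must justify the change of variables inside the principal value limit, which is why isolating the symmetric domain $\R^d \setminus B_\e(0)$ (rather than a generic exhaustion) is used; no other obstacle is expected.
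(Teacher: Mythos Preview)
Your proposal is correct and follows essentially the same route as the paper: symmetrize via $y\mapsto -y$ inside the truncated integral, average the two expressions, and then use a second-order Taylor estimate to show the symmetrized integrand is absolutely integrable so the principal value is unnecessary.

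The only noteworthy difference is in the $L^1$ step. The paper obtains a single pointwise bound
\[
\chi_\delta(y)\,\frac{|u(x+y)+u(x-y)-2u(x)|}{|y|^{d+2\alpha}}\;\le\;\frac{C_\delta}{1+|x|^{d+1}}\cdot\frac{1}{|y|^{d+2\alpha-2}},
\]
using the Schwartz decay of $\sup_{B_\delta(x)}|\D^2 u|$, which places the integrand directly in $L^1(\R^d_x\times\R^d_y)$ and then invokes Fubini. You instead integrate the Taylor remainder in $x$ first, using translation invariance to get $\int_{\R^d}|u(x+y)+u(x-y)-2u(x)|\,\d x\le |y|^2\|\D^2 u\|_{L^1}$, and then integrate in $y$. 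Both are standard; yours has the mild advantage of only using $\D^2 u\in L^1$ rather than the full Schwartz decay, while the paper's product bound is reused later (e.g.\ in Lemma~\ref{lem:Kuu}) to justify dominated convergence in $x$. Your side remark about ``outside a neighbourhood of $0$'' is unnecessary, since $\supp(\chi_\delta)\subset B_\delta$ already confines the $y$-integral to a bounded set.
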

  
  \begin{proof}  
    Let~$u \in \Schwartz(\R^d;\R^d)$. By the symmetry of~$\chi_\delta$ we have that~$\chi_\delta(y) = \chi_\delta(-y)$ for every~$y \in \R^d$. Given~$u \in \Schwartz(\R^d;\R^d)$, a change of variables then yields that
    \[ \label{eq:K_delta_with_plus}
      \begin{split}
        K_\delta[u](x) & = - \frac{\kappa}{\delta^{2(1-\alpha)}} \pvint_{\R^d} \chi_\delta(y) \frac{u(x) - u(x-y)}{|y|^{d + 2 \alpha}} \, \d y \\
        & = - \frac{\kappa}{\delta^{2(1-\alpha)}} \pvint_{\R^d} \chi_\delta(y) \frac{u(x) - u(x+y)}{|y|^{d + 2 \alpha}} \, \d y \, .
      \end{split}
    \]
    It follows that
    \[ \label{eq:K_delta_alternative_pv}
      \begin{split}
        &K_\delta[u](x) \\
         = \;&- \frac{1}{2} \frac{\kappa}{\delta^{2(1-\alpha)}} \left( \pvint_{\R^d} \chi_\delta(y) \frac{u(x) - u(x-y)}{|y|^{d + 2 \alpha}} \, \d y +  \pvint_{\R^d} \chi_\delta(y) \frac{u(x) - u(x+y)}{|y|^{d + 2 \alpha}} \, \d y \right) \\
 =\;& \frac{1}{2} \frac{\kappa}{\delta^{2(1-\alpha)}} \pvint_{\R^d} \chi_\delta(y) \frac{u(x+y) + u(x-y) - 2 u(x)}{|y|^{d + 2 \alpha}} \, \d y \, .\\
      \end{split}
    \]

    Let us show that the principal value integral in the last expression is actually an integral in the classical sense. Indeed, by a Taylor expansion of~$u$ we have that, for~$y \in B_\delta$,
      \[
      \begin{split}
        u(x+y) & = u(x) + \D u(x)  y + O(|y^2|) \\
  {\mbox{and }}\quad      u(x-y) & = u(x) - \D u(x)   y + O(|y^2|) \, ,
      \end{split}
      \]
      where, more precisely, the error terms are of order~$C|y|^2\sup_{z \in B_\delta(x)} |\D^2 u|$. 
      Since~$u \in \Schwartz(\R^d;\R^d)$, we have that 
      \[
        \sup_{z \in B_\delta(x)} |\D^2 u| \leq \frac{C_\delta}{1 + |x|^{d+1}} \, , \quad \text{for every } x \in \R^d \, ,
      \]
      for some constant~$C_\delta > 0$ depending on~$\delta$.
      Hence, for every~$x \in \R^d$ and~$y \in B_\delta$, we have that
      \[ \label{eq:u_expansion_in_L1}
        \frac{|u(x+y) + u(x-y) - 2 u(x)|}{|y|^{d + 2 \alpha}}  \leq \frac{C\sup_{z \in B_\delta(x)} |\D^2 u|}{|y|^{d+ 2 \alpha -2}} \leq C_\delta \frac{1}{1+|x|^{d+1}} \frac{1}{|y|^{d+ 2 \alpha -2}} \, .
      \]

      The above estimate shows two things. 
      First, the term~$\frac{1}{1+|x|^{d+1}}$ is bounded and the function $\frac{1}{|y|^{d+ 2 \alpha -2}}$ is integrable with respect to~$y$ in~$B_\delta$, since~$d + 2 \alpha - 2 < d$.
      We deduce that the principal value integral in the expression of~$K_\delta[u]$ in~\eqref{eq:K_delta_alternative_pv} is actually an integral in the classical sense, \ie, 
      \[  
        K_\delta[u](x) = \frac{1}{2} \frac{\kappa}{\delta^{2(1-\alpha)}} \int_{\R^d} \chi_\delta(y) \frac{u(x+y) + u(x-y) - 2 u(x)}{|y|^{d + 2 \alpha}} \, \d y \, ,
      \] 
      and it is finite. 

      Second, \eqref{eq:u_expansion_in_L1} shows that 
      \[  \label{eq:integrand_K_delta_in_L1}
        \chi_\delta(y) \frac{u(x+y) + u(x-y) - 2 u(x)}{|y|^{d + 2 \alpha}} \in L^1(\R^d \x \R^d; \R^d) \, .
      \]
      By Fubini's Theorem, we conclude that~$K_\delta[u] \in L^1(\R^d;\R^d)$.
  \end{proof}
    
  \begin{remark}
   Since the nonlocal operator~$K_\delta[u]$ is the main ingredient in the peridynamics model, it is convenient to recast it in a form that highlights the constitutive assumptions of the model.
    More precisely, it can be written as a convolution operator with a singular kernel~$f_\delta$, \ie, 
    \[ \label{eq:K_delta_convolution}
      K_\delta[u](x) = - \frac{\kappa}{\delta^{2(1-\alpha)}} \pvint_{\R^d} \chi_\delta(x-x') \frac{u(x) - u(x')}{|x-x'|^{d + 2 \alpha}} \, \d x' = - \pvint_{\R^d} f_\delta(x-x',u(x) - u(x')) \, \d x' \, ,
    \]
    where $f_\delta \colon (\R^d \sm \{0\}) \times \R^d \to \R^d$ is defined by
    \[
      f_\delta(z,u) := \frac{\kappa}{\delta^{2(1-\alpha)}} \chi_\delta(z) \frac{u}{|z|^{d + 2 \alpha}} \, , \quad \text{for every } (z,u) \in (\R^d \sm \{0\}) \times \R^d \, .
    \]
    We observe that the singular kernel~$f_\delta$ satisfies the following properties, which are the typical constitutive assumptions in peridynamics models:
    \begin{itemize}
      \item  $f_\delta(-z,-u) = -f_\delta(z,u)$ for every~$(z,u) \in (\R^d \sm \{0\}) \times \R^d$ (the counterpart of Newton's law of action and reaction);
      \item $f_\delta = \nabla_u \Phi_\delta$ (the counterpart of the assumption that forces are conservative), where the potential function $\Phi_\delta \colon (\R^d \sm \{0\}) \times \R^d \to \R$ is defined by 
      \[
        \Phi_\delta(z,u) = \frac{1}{2} \frac{\kappa}{\delta^{2(1-\alpha)}} \chi_\delta(z)  \frac{|u|^2}{|z|^{d + 2 \alpha}} \, , \quad \text{for every } (z,u) \in (\R^d \sm \{0\}) \times \R^d \, .
      \]
    \end{itemize}
\end{remark}

\subsection{Limit as $\delta \to 0$ of the operator}
  The presence of the scaling factor~$\delta^{2(1-\alpha)}$ in the definition of~$K_\delta[u]$ is crucial for the main results in this paper to obtain a nontrivial limit as~$\delta \to 0$. To clarify the specific choice of the scaling factor we present the following result.

 \begin{lemma} \label{lem:limit_K_delta}
  Let $u \in \Schwartz(\R^d;\R^d)$. Then,
  \[
    \lim_{\delta \to 0}  K_\delta[u](x) = \gamma^2 \Delta u(x) \, , \quad \text{for every } x \in \R^d \, ,
  \]
  where 
  \[ \label{eq:def_gamma}
  \gamma = \left( \frac{\kappa}{2} \L^d(B_1) \int_{0}^1 \chi(\rho) \rho^{1-2 \alpha}  \, \d \rho \right)^\frac{1}{2} \, .
  \]
 \end{lemma}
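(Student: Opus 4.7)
The plan is to start from the symmetrized expression for~$K_\delta[u]$ provided by Lemma~\ref{lem:K_delta_alternative}, namely
\[
K_\delta[u](x) = \frac{\kappa}{2\delta^{2(1-\alpha)}} \int_{\R^d} \chi_\delta(y) \frac{u(x+y) + u(x-y) - 2 u(x)}{|y|^{d + 2 \alpha}} \, \d y,
\]
and to apply a fourth-order Taylor expansion of~$u$ around~$x$. Since the odd-order terms in~$y$ cancel between~$u(x+y)$ and~$u(x-y)$, we would write
\[
u(x+y) + u(x-y) - 2 u(x) = \D^2 u(x)[y,y] + R(x,y),
\]
where~$|R(x,y)| \le C\|\D^4 u\|_{L^\infty} |y|^4$ for all~$y \in B_\delta$ (using that~$u \in \Schwartz$). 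This splits~$K_\delta[u](x)$ into a main quadratic term plus a remainder.

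The second step is to evaluate the main term by the change of variables~$y = \delta z$, which yields
\[
\frac{\kappa}{2\delta^{2(1-\alpha)}} \int_{\R^d} \chi_\delta(y) \frac{\D^2 u(x)[y,y]}{|y|^{d+2\alpha}} \, \d y
= \frac{\kappa}{2} \sum_{i,j=1}^d \partial_i\partial_j u(x) \int_{\R^d} \chi(|z|) \frac{z_i z_j}{|z|^{d+2\alpha}} \, \d z,
\]
the~$\delta$-powers cancelling exactly thanks to the scaling~$\delta^{2(1-\alpha)}$. Here the chosen radial symmetry of~$\chi$ is crucial: the off-diagonal terms~$i\neq j$ vanish by symmetry in~$z_i$, while the diagonal terms~$\int \chi(|z|) z_i^2 / |z|^{d+2\alpha} \, \d z$ are all equal to $\frac{1}{d} \int \chi(|z|) |z|^{2-d-2\alpha} \, \d z$. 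Passing to spherical coordinates and using~$\H^{d-1}(\S^{d-1}) = d\, \L^d(B_1)$ and~$\supp(\chi) \subset [-1,1]$ gives
\[
\int_{\R^d} \chi(|z|) |z|^{2-d-2\alpha} \, \d z = d\, \L^d(B_1) \int_0^1 \chi(\rho) \rho^{1-2\alpha} \, \d \rho,
\]
so that the main term equals exactly~$\gamma^2 \Delta u(x)$ with~$\gamma$ as in~\eqref{eq:def_gamma}.

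The final step is to show the remainder is negligible. The contribution of~$R(x,y)$ is bounded by
\[
\frac{C \|\D^4 u\|_{L^\infty}}{\delta^{2(1-\alpha)}} \int_{B_\delta} \frac{|y|^4}{|y|^{d+2\alpha}} \, \d y
\le C' \|\D^4 u\|_{L^\infty}\, \delta^2,
\]
using that~$\int_{B_\delta} |y|^{4-d-2\alpha} \, \d y \sim \delta^{4-2\alpha}$ (the exponent~$4-d-2\alpha$ makes the integrand locally integrable and the spherical integration gives the claimed power), and this tends to~$0$ as~$\delta \to 0$. I do not expect any real obstacle here: the main care is just to confirm that the Taylor remainder is controlled uniformly in~$y \in B_\delta$ by~$|y|^4$ times a Schwartz-controlled constant, and that the integrability exponent~$4-d-2\alpha > -d$ (equivalently~$\alpha < 2$, which is automatic since~$\alpha \in (0,1)$) makes the remainder integral well-defined. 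Combining the main term and the vanishing remainder yields the pointwise limit~$K_\delta[u](x) \to \gamma^2 \Delta u(x)$ for every~$x \in \R^d$.
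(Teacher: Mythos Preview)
Your proposal is correct and follows essentially the same strategy as the paper: Taylor expand, rescale $y=\delta z$ so that the $\delta$-powers cancel on the quadratic term, use radial symmetry to identify the main term as $\gamma^2\Delta u(x)$, and show the remainder vanishes. The minor differences are tactical: you start from the symmetrized expression of Lemma~\ref{lem:K_delta_alternative} (so odd-order terms cancel automatically and your remainder is $O(|y|^4)$, yielding a cleaner $O(\delta^2)$ error), and you handle the Hessian term by the direct odd-symmetry observation $\int \chi(|z|)\,z_i z_j/|z|^{d+2\alpha}\,\d z=0$ for $i\neq j$, whereas the paper works from the non-symmetrized principal-value form, disposes of the first-order term separately, and diagonalizes $\D^2 u_i(x)$ via the spectral theorem before reducing to the same radial integral.
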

 \begin{proof}
  We prove the result componentwise, \ie, we show that~$(K_\delta[u])_i(x) \to \gamma^2 \Delta u_i(x)$ for every~$i = 1,\ldots,d$. 
  We approximate a component~$u_i$ using a Taylor expansion, \ie, we write
  \[
    u_i(x-y) = u_i(x) - \nabla u_i(x) \cdot y + \frac{1}{2} (\D^2 u_i(x) y) \cdot y + O(|y|^3) \, .
  \]
  We then substitute this expression in the definition
  of~$K_\delta[u](x)$ in~\eqref{eq:def_K_delta}, we use the definition of~$\chi_\delta$ in~\eqref{eq:def_chi_delta}, and we apply the change of variables~$y = \delta z$ to obtain that
  \[  \label{eq:K_delta_expansion}
    \begin{split}
      (K_\delta[u])_i(x) & = - \frac{\kappa}{\delta^{2(1-\alpha)}} \pvint_{\R^d} \chi_\delta(y) \frac{\nabla u_i(x) \cdot y - \frac{1}{2} (\D^2 u_i(x) y) \cdot y + O(|y|^3)}{|y|^{d + 2 \alpha}} \, \d y  \\
      & = - \frac{\kappa}{\delta^{2(1-\alpha)}} \pvint_{\R^d} \chi_\delta(\delta z) \frac{\nabla u_i(x) \cdot z - \delta^2 \frac{1}{2} (\D^2 u_i(x)  z) \cdot   z + \delta^3 O(|z|^3)}{\delta^{d+2\alpha} | z|^{d + 2 \alpha}} \delta^d \, \d z  \\
      & = - \frac{\kappa}{\delta} \pvint_{\R^d} \chi(|z|) \frac{\nabla u_i(x) \cdot z - \delta \frac{1}{2} (\D^2 u_i(x)  z) \cdot   z + \delta^2 O(|z|^3)}{| z|^{d + 2 \alpha}} \, \d z \, .
    \end{split}
  \]
  
  We treat the three terms separately. 
  For the first term, we have that
  \[
    \pvint_{\R^d} \chi(|z|) \frac{\nabla u(x) \cdot z}{|z|^{d + 2 \alpha}} \, \d z  = 0\, .
  \]
  To show this, let~$R \in SO(d)$ be a rotation such that $R^T\nabla u_i(x) = |\nabla u_i(x)| e_1$, where~$e_1 = (1,0,\ldots,0) \in \R^d$. Then, we apply the change of variables~$w = Rz$ to infer that
  \[
    \pvint_{\R^d} \chi(|z|) \frac{\nabla u_i(x) \cdot z}{|z|^{d + 2 \alpha}} \, \d z = |\nabla u_i(x)|  \pvint_{\R^d} \chi(|z|)  \frac{z_1}{|z|^{d + 2 \alpha}} \, \d z  = 0  
  \]
  by symmetry. 

  For the second term in~\eqref{eq:K_delta_expansion}, we apply the spectral theorem to~$\D^2 u_i(x)$ to find a matrix~$Q \in O(d)$ such that~$Q^T \D^2 u_i(x) Q = \Lambda$, where~$\Lambda$ is the diagonal matrix with the eigenvalues~$\lambda_1,\ldots,\lambda_d$ of~$\D^2 u_i(x)$ on the diagonal. We then apply the change of variables~$z = Qw$ to obtain that
  \[
    \begin{split}
      & \pvint_{\R^d} \chi(|z|) \frac{(\D^2 u_i(x) z) \cdot z}{|z|^{d + 2 \alpha}} \, \d z = \pvint_{\R^d} \chi(|w|) \frac{(\D^2 u_i(x) Qw) \cdot Qw}{|w|^{d + 2 \alpha}} \, \d w  \\
      & \quad =  \pvint_{\R^d} \chi(|w|) \frac{(\Lambda w) \cdot w}{|w|^{d + 2 \alpha}} \, \d w = \sum_{j=1}^d \lambda_i \pvint_{\R^d} \chi(|w|) \frac{w_j^2}{|w|^{d + 2 \alpha}} \, \d w  \, .
    \end{split}
  \]
  By symmetry and by integrating in spherical coordinates, for~$j=1,\dots,d$ we have that
  \[
    \begin{split}
      & \kappa \pvint_{\R^d} \chi(|w|) \frac{w_j^2}{|w|^{d + 2 \alpha}} \, \d w = \frac{\kappa}{d} \sum_{\ell=1}^d \pvint_{\R^d} \chi(|w|) \frac{w_\ell^2}{|w|^{d + 2 \alpha}} \, \d w = \frac{\kappa}{d} \pvint_{\R^d} \chi(|w|) \frac{|w|^2}{|w|^{d + 2 \alpha}} \, \d w \\
      & \quad = \frac{\kappa}{d} \pvint_{\R^d}  \frac{\chi(|w|)}{|w|^{d + 2 \alpha - 2}} \, \d w = \kappa \frac{\H^{d-1}(\S^{d-1})}{d}  \pvint_{0}^\infty \frac{\chi(\rho)}{\rho^{d + 2 \alpha - 2}} \rho^{d-1}\, \d \rho\\
      & \quad  = \kappa \L^d(B_1) \pvint_{0}^\infty  \chi(\rho) \rho^{1-2 \alpha}  \, \d \rho = \kappa \L^d(B_1) \int_{0}^1  \chi(\rho) \rho^{1-2 \alpha}  \, \d \rho = 2\gamma^2 \, ,
    \end{split}
  \]
  since~$\alpha \in (0,1)$ and~$\chi$ has compact support. We have shown that 
  \[
    \begin{split}
      & \frac{\kappa}{\delta}\pvint_{\R^d} \chi(|z|) \frac{\delta \frac{1}{2} (\D^2 u_i(x) z) \cdot z}{|z|^{d + 2 \alpha}} \, \d z = \frac{1}{2} \kappa \int_{\R^d} \chi(|z|) \frac{(\D^2 u_i(x) z) \cdot z}{|z|^{d + 2 \alpha}} \, \d z =  \gamma^2 \sum_{j=1}^d \lambda_j \\
      & = \gamma^2 \, \mathrm{trace}(\D^2 u(x)) = \gamma^2 \Delta u(x) \, .
    \end{split}
  \]

  Finally, let us show that the third term in~\eqref{eq:K_delta_expansion} vanishes as~$\delta \to 0$. Indeed, the function~$\frac{|z|^3}{|z|^{d + 2\alpha}} = \frac{1}{|z|^{d  + 2\alpha - 3}}$ is integrable in~$B_1$ since~$d + 2\alpha - 3 < d$. It follows that 
    \[
    \lim_{\delta\to0}  \frac{\kappa}{\delta} \pvint_{\R^d} \chi(|z|) \frac{ \delta^2 O(|z|^3)}{| z|^{d + 2 \alpha}} \, \d z \leq  \lim_{\delta\to0}
    \kappa \delta C \int_{\R^d}  \frac{\chi(|z|)}{| z|^{d + 2 \alpha-3}} \, \d z = 0 \, .  
    \]

Thus, gathering these pieces of information and letting~$\delta \to 0$ in~\eqref{eq:K_delta_expansion} we obtain the desired result.
 \end{proof}

 \begin{remark}
  The constant~$\gamma$ in Lemma~\ref{lem:limit_K_delta} will represent the speed of propagation of waves in the limit~$\delta \to 0$. It depends on the dimension~$d$, on the constitutive parameters~$\kappa$ and~$\alpha$, and on the cut-off function~$\chi$.
   In the particular case where~$\chi$ is the characteristic function of the interval~$[-1,1]$, we have the explicit formula~$\gamma = \big(\frac{\kappa}{4(1-\alpha)}   \L^d(B_1) \big)^\frac{1}{2}$.
 \end{remark}

\subsection{Dual operator and symmetry} 
In the sequel we will make use the dual operator of~$K_\delta$. 
It is needed to define the notion of distributional solution to the peridynamics model in Definition~\ref{def:distributional_solution} and to prove the uniqueness result in Theorem~\ref{thm:uniqueness} below.

The dual operator~$K_\delta^*$ of~$K_\delta$ is defined by
\[ \label{eq:def_K_delta_star}
K_\delta^*[v](x) := - \frac{\kappa}{\delta^{2(1-\alpha)}} \pvint_{\R^d} \chi_\delta(y) \frac{v(x) - v(x+y)}{|y|^{d + 2 \alpha}} \, \d y \, , \quad \text{for every } x \in \R^d \, ,
\] 
for every~$v \in \Schwartz(\R^d;\R^d)$.

\begin{remark}
  By the symmetry of~$\chi_\delta$, we have that~$K_\delta^*[v] = K_\delta[v]$ for every~$v \in \Schwartz(\R^d;\R^d)$. (See formula~\eqref{eq:K_delta_with_plus} for the computation.) This means that the operator~$K_\delta$ is symmetric.
\end{remark}

\begin{remark} \label{rem:symmetry}
  The dual operator~$K_\delta^*$ is defined in such a way that the duality pairing in the sense of distributions satisfies
    \[ \label{eq:duality_K_K_star}
      \langle K_\delta[u],v \rangle = \langle u,K_\delta^*[v] \rangle \, ,
  \] 
  for every $u \in \Schwartz(\R^d;\R^d)$ and~$v \in \Schwartz(\R^d;\R^d)$. Let us show this by using the alternative expression for~$K_\delta$ deduced in Lemma~\ref{lem:K_delta_alternative}. We start by observing that 
  \[
    \chi_\delta(y)\frac{u(x+y) + u(x-y) - 2 u(x)}{|y|^{d + 2 \alpha}} \cdot  v(x) \in L^1(\R^d \x \R^d) \, . 
  \]
  Then we can compute the duality pairing. By Fubini's Theorem we have that
  \[
    \begin{split}
     \langle K_\delta[u],v \rangle & = \int_{\R^d} K_\delta[u](x) \cdot v(x) \, \d x  \\
      & = \int_{\R^d} \left( \frac{1}{2} \frac{\kappa}{\delta^{2(1-\alpha)}} \int_{\R^d} \chi_\delta(y) \frac{u(x+y) + u(x-y) - 2 u(x)}{|y|^{d + 2 \alpha}} \, \d y \right) \cdot v(x) \, \d x \, . \\
      & = \frac{1}{2} \frac{\kappa}{\delta^{2(1-\alpha)}}  \int_{\R^d} \frac{\chi_\delta(y)}{|y|^{d + 2 \alpha}} \left( \int_{\R^d}   \big( u(x+y) + u(x-y) - 2 u(x) \big)  \cdot v(x) \, \d x \right) \, \d y \\
      & = \frac{1}{2} \frac{\kappa}{\delta^{2(1-\alpha)}}  \int_{\R^d} \frac{\chi_\delta(y)}{|y|^{d + 2 \alpha}} \left( \int_{\R^d}   \big( v(x+y) + v(x-y) - 2 v(x) \big)  \cdot u(x) \, \d x \right) \, \d y \\
      & =\int_{\R^d} \left( \frac{1}{2} \frac{\kappa}{\delta^{2(1-\alpha)}} \int_{\R^d} \chi_\delta(y) \frac{v(x+y) + v(x-y) - 2 v(x)}{|y|^{d + 2 \alpha}} \, \d y \right) \cdot u(x) \, \d x \\
      & = \int_{\R^d} u(x) \cdot K_\delta[v](x)  \, \d x  \\&= \langle u, K_\delta[v] \rangle \, ,
    \end{split}
  \]
  where we applied a change of variables in the fourth equality. Since~$K_\delta = K_\delta^*$, this concludes the proof.  
\end{remark}

\subsection{Symbol of the peridynamics operator}
We compute here the symbol of the nonlocal operator~$K_\delta$ in the Fourier domain. 
This will be useful to provide solutions to the peridynamics model through the Fourier transform. 

\begin{lemma} \label{lem:symbol_K_delta}
  Let $u \in \Schwartz(\R^d;\R^d)$. Then,
\[
  \hat{K_\delta[u]}(\xi) = - \omega_\delta^2(\xi) \hat{u}(\xi) \, , 
\]
where 
\[ \label{eq:def_omega_delta}
\omega_\delta(\xi) = \left( \frac{\kappa}{\delta^{2(1-\alpha)}} \int_{\R^d} \chi_\delta(y) \frac{1 - \cos(y \cdot \xi)}{|y|^{d + 2 \alpha}} \, \d y \right)^\frac{1}{2} \, .
\]
\end{lemma}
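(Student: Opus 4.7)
The cleanest route is to apply the alternative expression for $K_\delta[u]$ furnished by Lemma~\ref{lem:K_delta_alternative}, which recasts the principal value integral as an absolutely convergent one:
\[
K_\delta[u](x) = \frac{1}{2} \frac{\kappa}{\delta^{2(1-\alpha)}} \int_{\R^d} \chi_\delta(y) \frac{u(x+y) + u(x-y) - 2 u(x)}{|y|^{d+2\alpha}} \, \d y.
\]
This removes the need to track any cancellations coming from the principal value when we take the Fourier transform.

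The plan is then to compute $\F[K_\delta[u]](\xi)$ by exchanging the spatial Fourier transform with the integral in $y$. This is justified by Fubini's Theorem thanks to the $L^1$ integrability stated in~\eqref{eq:integrand_K_delta_in_L1}; the same estimate ensures we may multiply by the bounded Schwartz test function $e^{-ix\cdot\xi}$ and still have an $L^1(\R^d \x \R^d)$ integrand. After swapping, the translation rule for the Fourier transform gives $\F[u(\cdot+y)](\xi) = e^{iy\cdot\xi}\hat{u}(\xi)$ and $\F[u(\cdot-y)](\xi) = e^{-iy\cdot\xi}\hat{u}(\xi)$, so
\[
\F[K_\delta[u]](\xi) = \frac{1}{2} \frac{\kappa}{\delta^{2(1-\alpha)}} \left( \int_{\R^d} \chi_\delta(y) \frac{e^{iy\cdot\xi} + e^{-iy\cdot\xi} - 2}{|y|^{d+2\alpha}} \, \d y \right) \hat{u}(\xi).
\]

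Using $e^{iy\cdot\xi} + e^{-iy\cdot\xi} = 2\cos(y\cdot\xi)$, the parenthesis equals $-2 \int_{\R^d} \chi_\delta(y)\frac{1-\cos(y\cdot\xi)}{|y|^{d+2\alpha}} \, \d y$, and we recognize exactly $-\omega_\delta^2(\xi)$ as defined in~\eqref{eq:def_omega_delta}. This yields $\F[K_\delta[u]](\xi) = -\omega_\delta^2(\xi)\hat{u}(\xi)$. Note that $1-\cos(y\cdot\xi)\geq 0$ and $\chi_\delta \geq 0$, so $\omega_\delta^2(\xi)$ is nonnegative and the square root in the definition of $\omega_\delta$ is unambiguously defined; moreover, near $y=0$ the factor $1-\cos(y\cdot\xi)$ vanishes like $|y|^2$, so the integrand is integrable despite the $|y|^{-d-2\alpha}$ singularity (recall $\alpha\in(0,1)$), confirming that $\omega_\delta(\xi)$ is finite.

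There is essentially no hard step: the main point is to dispose of the principal value by symmetrization before Fourier-transforming, after which the computation collapses to the translation property. The only care needed is to justify the application of Fubini, which is already handled by the estimate~\eqref{eq:u_expansion_in_L1}--\eqref{eq:integrand_K_delta_in_L1} established in the proof of Lemma~\ref{lem:K_delta_alternative}.
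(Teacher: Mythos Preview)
Your proof is correct and follows essentially the same approach as the paper: both start from the symmetrized expression of Lemma~\ref{lem:K_delta_alternative}, invoke Fubini via the integrability in~\eqref{eq:integrand_K_delta_in_L1}, apply the translation rule for the Fourier transform, and simplify using $e^{iy\cdot\xi}+e^{-iy\cdot\xi}=2\cos(y\cdot\xi)$. Your additional remark on the well-definedness and finiteness of $\omega_\delta(\xi)$ is a small bonus not spelled out in the paper's proof.
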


\begin{proof}
  First of all, we observe that Lemma~\ref{lem:K_delta_alternative} allows us to compute the Fourier transform of~$K_\delta[\varphi]$ since~$K_\delta[u] \in L^1(\R^d;\R^d)$. Using the alternative expression of~$K_\delta[u]$ in~\eqref{eq:K_delta_alternative} and by~\eqref{eq:integrand_K_delta_in_L1}, we apply Fubini's Theorem to obtain that
  \[
    \begin{split}
      \hat{K_\delta[u]}(\xi) & = \frac{1}{(2\pi)^{d/2}}\int_{\R^d} K_\delta[u](x) e^{-i x \cdot \xi}  \, \d x \\
      & = \frac{1}{(2\pi)^{d/2}}\int_{\R^d} \left( \frac{1}{2} \frac{\kappa}{\delta^{2(1-\alpha)}} \int_{\R^d} \chi_\delta(y) \frac{u(x+y) + u(x-y) - 2 u(x)}{|y|^{d + 2 \alpha}} \, \d y  \right) e^{-i x \cdot \xi}  \, \d x \\
      & = \frac{1}{2} \frac{\kappa}{\delta^{2(1-\alpha)}}  \int_{\R^d} \frac{\chi_\delta(y)}{|y|^{d + 2 \alpha}}  \left( \frac{1}{(2\pi)^{d/2}}  \int_{\R^d} \big( u(x+y) + u(x-y) - 2 u(x) \big) e^{-i x \cdot \xi}  \, \d x  \right)   \, \d y \\
      & = \left( \frac{1}{2} \frac{\kappa}{\delta^{2(1-\alpha)}}  \int_{\R^d} \chi_\delta(y) \frac{e^{i y \cdot \xi} + e^{- i y \cdot \xi} - 2}{|y|^{d + 2 \alpha}}      \, \d y \right) \hat{u}(\xi)  \\
      & = - \left( \frac{\kappa}{\delta^{2(1-\alpha)}}  \int_{\R^d} \chi_\delta(y) \frac{1 - \cos(y \cdot \xi)}{|y|^{d + 2 \alpha}} \, \d y \right) \hat{u}(\xi) \, .
    \end{split}
  \]
    This concludes the proof.
\end{proof}

We point out that the function~$\omega_\delta(\xi)$ obtained in Lemma~\ref{lem:symbol_K_delta} is the dispersion relation in the peridynamics model. 
Because of its importance, we will study here some of its properties.

\begin{remark} \label{rem:omega_delta_radial}
  Since~$\chi_\delta$ is radial, then~$\omega_\delta$ is radial as well. Indeed, for any~$R \in O(d)$, by changing variables~$z = R^T y$ we have that
  \[
    \begin{split}
      \omega_\delta^2(R\xi) &  = \frac{\kappa}{\delta^{2(1-\alpha)}} \int_{\R^d} \chi_\delta(y) \frac{1 - \cos(y \cdot R\xi)}{|y|^{d + 2 \alpha}} \, \d y = \frac{\kappa}{\delta^{2(1-\alpha)}} \int_{\R^d} \chi_\delta(y) \frac{1 - \cos(R^Ty \cdot \xi)}{|y|^{d + 2 \alpha}} \, \d y \\
      & = \frac{\kappa}{\delta^{2(1-\alpha)}} \int_{\R^d} \chi_\delta(Rz) \frac{1 - \cos(z \cdot \xi)}{|Rz|^{d + 2 \alpha}} \, \d z = \frac{\kappa}{\delta^{2(1-\alpha)}} \int_{\R^d} \chi_\delta(z) \frac{1 - \cos(z \cdot \xi)}{|z|^{d + 2 \alpha}} \, \d z = \omega_\delta^2(\xi) \, .
    \end{split}
    \]
\end{remark}
  
A first important result concerning the dispersion relation~$\omega_\delta$ is its behavior for small and large frequencies~$\xi$. We see that the nonlocality strongly affects the behavior of~$\omega_\delta$. This analysis has been carried out in~\cite{CocDipFanMadVal22}. We provide the details of the proof for the sake of completeness.
\begin{lemma} \label{lem:omega_delta_low_high_frequencies}
  We have that 
  \[ \label{eq:omega_delta_low_frequencies}
  \lim_{|\xi| \to 0} \frac{\omega_\delta(\xi)}{|\xi|} = \gamma = \left( \frac{\kappa}{2} \L^d(B_1) \int_{0}^1 \chi(\rho) \rho^{1-2 \alpha}  \, \d \rho \right)^\frac{1}{2}\, ,
  \]
  and 
  \[ \label{eq:omega_delta_high_frequencies}
  \lim_{|\xi| \to +\infty} \frac{\omega_\delta(\xi)}{|\xi|^\alpha} = \lambda = \left( \frac{\kappa}{\delta^{2(1-\alpha)}} \int_{\R^d}  \frac{1 - \cos(z_1)}{|z|^{d + 2 \alpha}}   \, \d z \right)^\frac{1}{2} \, .  
  \]
\end{lemma}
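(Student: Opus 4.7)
The plan is to invoke the radial symmetry of $\omega_\delta$ established in Remark~\ref{rem:omega_delta_radial} so that it suffices to evaluate both limits along the ray $\xi = t e_1$ with $t > 0$. Both asymptotics will then come from dominated convergence applied to the defining integral
\[
\omega_\delta^2(\xi) = \frac{\kappa}{\delta^{2(1-\alpha)}} \int_{\R^d} \chi_\delta(y) \frac{1-\cos(y\cdot \xi)}{|y|^{d+2\alpha}}\, \d y
\]
after an appropriate rescaling of the integration variable on each end of the spectrum.

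For the low-frequency limit, I would divide by $|\xi|^2 = t^2$ and note that the integrand
\[
\chi_\delta(y)\, \frac{1-\cos(t y_1)}{t^2\, |y|^{d+2\alpha}}
\]
converges pointwise to $\chi_\delta(y)\, \frac{y_1^2/2}{|y|^{d+2\alpha}}$ as $t\to 0$. The elementary bound $\frac{1-\cos s}{s^2}\leq \frac{1}{2}$ produces the dominating function $\chi_\delta(y) \frac{|y|^{2-d-2\alpha}}{2}$, integrable on $B_\delta$ since $d + 2\alpha - 2 < d$ (as $\alpha < 1$). Dominated convergence yields
\[
\lim_{t\to 0} \frac{\omega_\delta^2(t e_1)}{t^2} = \frac{\kappa}{2\delta^{2(1-\alpha)}} \int_{\R^d} \chi_\delta(y) \frac{y_1^2}{|y|^{d+2\alpha}}\, \d y.
\]
Then, exactly as in the corresponding step of the proof of Lemma~\ref{lem:limit_K_delta}, I would average in $y$ via $\int \chi_\delta(y) \frac{y_1^2}{|y|^{d+2\alpha}}\, \d y = \frac{1}{d} \int \chi_\delta(y) |y|^{2-d-2\alpha}\, \d y$, pass to spherical coordinates, and make the change of variable $\rho = \delta r$. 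The resulting factor $\delta^{2(1-\alpha)} \L^d(B_1) \int_0^1 \chi(r) r^{1-2\alpha}\, \d r$ cancels the prefactor $\delta^{-2(1-\alpha)}$ to give exactly $\gamma^2$.

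For the high-frequency limit, I would rescale by setting $z = ty$, which produces
\[
\omega_\delta^2(t e_1) = \frac{\kappa\, t^{2\alpha}}{\delta^{2(1-\alpha)}} \int_{\R^d} \chi\!\left(\frac{|z|}{\delta t}\right) \frac{1-\cos(z_1)}{|z|^{d+2\alpha}}\, \d z,
\]
so that the correct normalization is visibly $t^{2\alpha}$. As $t\to +\infty$, using $\chi \equiv 1$ on $[-\tfrac12,\tfrac12]$, for every fixed $z$ one has $\chi(|z|/(\delta t)) = 1$ once $t > 2|z|/\delta$, so the factor converges pointwise to $1$; moreover it is bounded by $1$. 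The dominating function $\frac{1-\cos(z_1)}{|z|^{d+2\alpha}}$ is integrable on $\R^d$: a Taylor expansion near the origin gives decay $|z|^{2-d-2\alpha}$ (integrable since $\alpha<1$), while at infinity $\frac{2}{|z|^{d+2\alpha}}$ is integrable since $\alpha>0$. Dominated convergence then delivers $\omega_\delta^2(t e_1)/t^{2\alpha} \to \lambda^2$.

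The main obstacles are essentially bookkeeping: verifying integrability of the dominating functions at both the singularity $y = 0$ (resp. $z=0$) and, in the high-frequency case, at infinity, and then correctly tracking the $\delta$-powers in the low-frequency case so that the prefactor cancels exactly against the spherical integral, reproducing the explicit form of $\gamma$. All these computations parallel the ones carried out in Lemma~\ref{lem:limit_K_delta}, which I would cite and reuse to keep the argument concise.
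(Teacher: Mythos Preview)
Your proposal is correct and follows essentially the same approach as the paper: both arguments apply dominated convergence to the defining integral after the appropriate normalization, using the bound $1-\cos s \le s^2/2$ for the low-frequency regime and the rescaling $z=|\xi|y$ together with $\chi_\delta(0)=1$ for the high-frequency regime, then identify the limiting integral with $\gamma^2$ via the same averaging-and-spherical-coordinates computation. The only cosmetic difference is that you invoke radial symmetry upfront to restrict to $\xi=te_1$, whereas the paper keeps a general direction $\nu\in\S^{d-1}$ throughout and uses the symmetry only when evaluating the resulting integral.
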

\begin{proof}
  In this proof, we write~$\xi = |\xi| \nu$ with~$\nu \in \S^{d-1}$. 
  
  Let us start by proving~\eqref{eq:omega_delta_low_frequencies}. We have that
  \[
    \begin{split}
       \frac{\omega_\delta^2(\xi)}{|\xi|^2} & =  \frac{1}{|\xi|^2}\frac{\kappa}{\delta^{2(1-\alpha)}} \int_{\R^d} \chi_\delta(y) \frac{1 - \cos(|\xi|y \cdot \nu)}{|y|^{d + 2 \alpha}} \, \d y \, .
    \end{split}
  \]
  We observe that 
  \[
    \chi_\delta(y)\frac{|1 - \cos(|\xi|y \cdot \nu)|}{|y|^{d + 2 \alpha}} \leq  |\xi|^2 \frac{ \chi_\delta(y)}{2|y|^{d + 2 \alpha - 2}}  \, , 
  \]
  and the function in the right-hand side is integrable in~$B_\delta$. We can then apply the Dominated Convergence Theorem to obtain that
  \[
    \lim_{|\xi| \to 0} \frac{\omega_\delta^2(\xi)}{|\xi|^2} = \frac{\kappa}{\delta^{2(1-\alpha)}} \int_{\R^d} \frac{\chi_\delta(y)}{|y|^{d + 2 \alpha}} \left(
    \lim_{|\xi| \to 0}  \frac{1 - \cos(|\xi|y \cdot \nu)}{|\xi|^2} \right) \, \d y = \frac{\kappa}{2 \delta^{2(1-\alpha)}} \int_{\R^d} \chi_\delta(y) \frac{|y \cdot \nu|^2}{|y|^{d + 2 \alpha}}  \, \d y  \, .
  \]
  Let us compute the integral obtained in the limit. By radial symmetry, by substituting~$y = \delta z$, and by integrating in spherical coordinates, we have that
  \[ \label{eq:gamma_alternative_formula}
    \begin{split}
      & \frac{\kappa}{2 \delta^{2(1-\alpha)}}  \int_{\R^d} \chi_\delta(y) \frac{|y \cdot \nu|^2}{|y|^{d + 2 \alpha}}  \, \d y  = \frac{\kappa}{2 \delta^{2(1-\alpha)}}  \int_{\R^d} \chi_\delta(y) \frac{y_1^2}{|y|^{d + 2 \alpha}}  \, \d y \\
      & \quad  = \frac{1}{d} \frac{\kappa}{2 \delta^{2(1-\alpha)}}  \sum_{i=1}^d \int_{\R^d} \chi_\delta(y) \frac{y_i^2}{|y|^{d + 2 \alpha}}  \, \d y  = \frac{1}{d} \frac{\kappa}{2 \delta^{2(1-\alpha)}}  \int_{\R^d} \chi
      \left(\frac{|y|}{\delta}\right) \frac{1}{|y|^{d + 2 \alpha-2}}  \, \d y \\
      & \quad = \frac{1}{d} \frac{\kappa}{2 \delta^{2(1-\alpha)}}  \int_{\R^d} \chi(|z|) \frac{1}{\delta^{d + 2 \alpha-2}|z|^{d + 2 \alpha-2}}  \delta^d \, \d z =  \frac{\kappa}{2}  \frac{1}{d} \int_{\R^d} \chi(|z|) \frac{1}{|z|^{d + 2 \alpha-2}}    \, \d z \\
      & \quad = \frac{\kappa}{2} \frac{\H^{d-1}(\S^{d-1})}{d}  \int_0^{+\infty} \chi(\rho) \frac{1}{\rho^{d + 2 \alpha-2}} \rho^{d-1}  \, \d \rho = \frac{\kappa}{2} \L^d(B_1) \int_0^1 \chi(\rho) \rho^{1-2 \alpha}  \, \d \rho = \gamma^2 \, .
    \end{split}
  \]
  The last two displays give~\eqref{eq:omega_delta_low_frequencies}.

  Let us prove now . Applying the change of variables~$|\xi|y = z$, we have that 
  \[ \label{eq:omega_delta_high_frequencies_computation}
    \begin{split}
      \frac{\omega_\delta^2(\xi)}{|\xi|^{2\alpha}} & = \frac{1}{|\xi|^{2\alpha}}\frac{\kappa}{\delta^{2(1-\alpha)}} \int_{\R^d} \chi_\delta(y) \frac{1 - \cos(|\xi|y \cdot \nu)}{|y|^{d + 2 \alpha}} \, \d y \\
      & =  \frac{\kappa}{\delta^{2(1-\alpha)}} \int_{\R^d} \chi_\delta\left(\frac{z}{|\xi|}\right) \frac{1 - \cos(z \cdot \nu)}{|z|^{d + 2 \alpha}}   \, \d z \, .
    \end{split}
  \]
  Taking the limit as~$|\xi| \to \infty$, by the Dominated Convergence Theorem we obtain that
  \[
    \begin{split}
      \lim_{|\xi| \to +\infty} \frac{\omega_\delta^2(\xi)}{|\xi|^{2\alpha}} & =  \frac{\kappa}{\delta^{2(1-\alpha)}} \chi_\delta(0) \int_{\R^d}  \frac{1 - \cos(z \cdot \nu)}{|z|^{d + 2 \alpha}}   \, \d z = \frac{\kappa}{\delta^{2(1-\alpha)}} \int_{\R^d}  \frac{1 - \cos(z \cdot \nu)}{|z|^{d + 2 \alpha}}   \, \d z  \\
      & \qquad= \frac{\kappa}{\delta^{2(1-\alpha)}} \int_{\R^d}  \frac{1 - \cos(z_1)}{|z|^{d + 2 \alpha}}   \, \d z = \lambda^2 \, ,
    \end{split}
  \]
  where we used that~$\chi_\delta(0) = 1$ by~\eqref{eq:properties_chi_delta}.
  This establishes~\eqref{eq:omega_delta_high_frequencies}, as desired.
\end{proof}

\begin{remark} \label{rem:regularity_of_K_delta}
  A consequence of Lemma~\ref{lem:symbol_K_delta} and Lemma~\ref{lem:omega_delta_low_high_frequencies} is that
  \[
    u \in \Schwartz(\R^d;\R^d) \implies K_\delta[u] \in H^s(\R^d;\R^d) \text{ for every } s > 0 \, .
  \]
  Indeed, Lemma~\ref{lem:omega_delta_low_high_frequencies} implies that $\omega_\delta(\xi) \leq C(1 + |\xi|^\alpha)$. Then, being $\hat u \in \Schwartz(\R^d;\R^d)$, we have that~$(1+|\xi|^2)^\frac{s}{2} \omega_\delta(\xi) \hat{u}(\xi) \in L^2(\R^d;\R^d)$ for every~$s > 0$. By~\eqref{eq:Hs_in_Fourier_space} we deduce that~$K_\delta[u] \in H^s(\R^d;\R^d)$ for every~$s > 0$. Note that, in particular, $K_\delta[u] \in C^\infty(\R^d;\R^d)$.
\end{remark}

We are interested in the limit of the dispersion relation~$\omega_\delta$ as~$\delta \to 0$. This will allow us to understand the behavior of the peridynamics waves in the limit~$\delta \to 0$, since the result shows that~$\omega_\delta$ approaches the dispersion relation of the classical wave equation as~$\delta \to 0$.

\begin{lemma} \label{lem:difference_omega_delta_gamma_xi}
  For every~$\xi \in \R^d$, we have that
  \[ \label{eq:limit_omega_delta}
  \big|\omega_\delta(\xi) - \gamma |\xi| \big| \leq C \delta |\xi|^2 \, .
  \]
\end{lemma}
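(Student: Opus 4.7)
The plan is to split into two regimes according to whether $\delta|\xi|\leq 1$ or $\delta|\xi|>1$, and to base everything on the observation that the elementary pointwise inequality $1-\cos t\leq t^2/2$ immediately gives the global one-sided bound $\omega_\delta(\xi)\leq \gamma|\xi|$. To see this, I would apply the inequality inside the integral defining $\omega_\delta^2(\xi)$ in~\eqref{eq:def_omega_delta} and then recognize the resulting quantity as $\gamma^2|\xi|^2$ by repeating the radial/spherical-coordinate manipulation used in~\eqref{eq:gamma_alternative_formula}; the point is that the identity
\[
\frac{\kappa}{2\delta^{2(1-\alpha)}}\int_{\R^d}\chi_\delta(y)\frac{(y\cdot\xi)^2}{|y|^{d+2\alpha}}\,\d y=\gamma^2|\xi|^2
\]
holds exactly, not only in the limit $|\xi|\to 0$.

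In the high-frequency regime $\delta|\xi|>1$, the conclusion is immediate: one has $|\omega_\delta(\xi)-\gamma|\xi||\leq \gamma|\xi|$, and the inequality $|\xi|\leq \delta|\xi|^2$ yields $|\omega_\delta(\xi)-\gamma|\xi||\leq \gamma\delta|\xi|^2$.

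In the low-frequency regime $\delta|\xi|\leq 1$, I would work with the difference of squares
\[
\omega_\delta^2(\xi)-\gamma^2|\xi|^2=\frac{\kappa}{\delta^{2(1-\alpha)}}\int_{\R^d}\chi_\delta(y)\frac{1-\cos(y\cdot\xi)-\tfrac{1}{2}(y\cdot\xi)^2}{|y|^{d+2\alpha}}\,\d y.
\]
Taylor's theorem gives $|1-\cos t-t^2/2|\leq t^4/24$ for every $t\in\R$, so the integrand is dominated by $C\chi_\delta(y)|\xi|^4|y|^{4-d-2\alpha}$. Since $\chi_\delta$ is supported in $B_\delta$ and the resulting radial exponent $3-2\alpha$ exceeds $-1$ (using $\alpha\in(0,1)$), a direct computation in spherical coordinates gives $\int_{B_\delta}|y|^{4-d-2\alpha}\,\d y \leq C\delta^{4-2\alpha}$, whence, after dividing by $\delta^{2(1-\alpha)}$, I obtain the key estimate $|\omega_\delta^2(\xi)-\gamma^2|\xi|^2|\leq C\delta^2|\xi|^4$. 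Factoring $\omega_\delta^2-\gamma^2|\xi|^2=(\omega_\delta-\gamma|\xi|)(\omega_\delta+\gamma|\xi|)$ and using the global bound $\omega_\delta\leq\gamma|\xi|$ (so that $\omega_\delta(\xi)+\gamma|\xi|\geq\gamma|\xi|$ for $\xi\neq 0$), I divide to obtain $|\omega_\delta(\xi)-\gamma|\xi||\leq C\delta^2|\xi|^3/\gamma$, and the regime assumption $\delta|\xi|\leq 1$ converts this into $C'\delta|\xi|^2$; the case $\xi=0$ is trivial since both sides vanish.

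The main subtlety is the need for the global one-sided bound $\omega_\delta\leq\gamma|\xi|$: this is what allows the difference-of-squares factorization to give a sharp estimate without losing one power of $|\xi|$ in the denominator. The only other ingredient beyond that is a Taylor estimate on $1-\cos$ for small frequencies and a triangle-inequality triviality for large frequencies, neither of which presents difficulty.
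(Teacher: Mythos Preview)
Your proof is correct and shares its core with the paper's: both derive the key squared estimate $|\omega_\delta^2(\xi)-\gamma^2|\xi|^2|\leq C\delta^2|\xi|^4$ from the Taylor bound $|1-\cos t - t^2/2|\leq t^4/24$ and the exact identity \eqref{eq:gamma_alternative_formula}. The difference lies only in the last step, extracting the linear bound from the squared one. The paper does this in one stroke via the elementary inequality $|\sqrt{A}-\sqrt{B}|\leq \sqrt{|A-B|}$ for $A,B\geq 0$ (equivalently, $|a^2-b^2|\leq c^2\Rightarrow |a-b|\leq c$), with no case split and no need for the one-sided bound $\omega_\delta\leq\gamma|\xi|$. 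Your regime split and the one-sided bound are thus unnecessary, though perfectly valid; note also that in your low-frequency argument the inequality $\omega_\delta(\xi)+\gamma|\xi|\geq\gamma|\xi|$ follows from $\omega_\delta\geq 0$ alone, so the one-sided bound is actually used only in your high-frequency case. Your route does have the minor bonus of making explicit the sharper low-frequency estimate $|\omega_\delta(\xi)-\gamma|\xi||\leq C\delta^2|\xi|^3$.
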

\begin{proof}
  We use the Taylor expansion of the cosine function to deduce that 
  \[
  \Big| 1 - \cos(y \cdot \xi) - \frac{1}{2} |y \cdot \xi|^2 \Big| \leq \frac{1}{4!}|y \cdot \xi|^4 \, .
  \]
  Substituting in the definition of~$\omega_\delta$ in~\eqref{eq:def_omega_delta} and by the alternative formula for~$\gamma$ derived in~\eqref{eq:gamma_alternative_formula}, we get that
  \[
    \begin{split}
      \big|\omega_\delta^2(\xi) - \gamma^2 |\xi|^2\big| & = \Big| \frac{\kappa}{\delta^{2(1-\alpha)}} \int_{\R^d} \chi_\delta(y) \frac{1 - \cos(y \cdot \xi)}{|y|^{d + 2 \alpha}} \, \d y -    \frac{\kappa}{2 \delta^{2(1-\alpha)}}  \int_{\R^d} \chi_\delta(y) \frac{|y \cdot \xi|^2}{|y|^{d + 2 \alpha}}  \, \d y \Big| \\
      & \leq \frac{\kappa}{4! \delta^{2(1-\alpha)}} \int_{\R^d} \chi_\delta(y)   \frac{|y \cdot \xi|^4}{|y|^{d + 2 \alpha}} \, \d y  \, .
    \end{split}
  \]
  To estimate the integral in the right-hand side, we apply the change of variables~$y = \delta z$ to infer that 
  \[
    \begin{split}
      & \frac{\kappa}{4! \delta^{2(1-\alpha)}} \int_{\R^d} \chi_\delta(y)   \frac{|y \cdot \xi|^4}{|y|^{d + 2 \alpha}} \, \d y  \leq |\xi|^4 \frac{\kappa}{4! \delta^{2(1-\alpha)}} \int_{\R^d} \chi\left(\frac{|y|}{\delta}\right)   \frac{1}{|y|^{d + 2 \alpha - 4}} \, \d y \\
      & = |\xi|^4 \frac{\kappa}{4! \delta^{2(1-\alpha)}} \int_{\R^d} \chi(|z|)   \frac{1}{\delta^{d + 2 \alpha - 4} |z|^{d + 2 \alpha - 4}} \delta^d \, \d z = \delta^2 |\xi|^4 \frac{\kappa}{4!} \int_{\R^d} \chi(|z|)   \frac{1}{|z|^{d + 2 \alpha - 4}}  \, \d z \\
      & \leq C \delta^2 |\xi|^4 \, , 
    \end{split}
  \]
  since $\frac{1}{|z|^{d + 2 \alpha - 4}}$ is integrable in~$B_1$. 
  Putting the estimates together, we obtain that 
  \[
    \big|\omega_\delta^2(\xi) - \gamma^2 |\xi|^2\big| \leq C \delta^2 |\xi|^4 
  \]
  and~\eqref{eq:limit_omega_delta} follows.
\end{proof}

The following technical lemma provides a bound on the derivatives of~$\omega_\delta^2$.

\begin{lemma} \label{lem:smoothness_omega_delta_2}
  The function $\xi \mapsto \omega_\delta^2(\xi)$ belongs to~$C^\infty(\R^d)$.
  Moreover, for every multi-index~$\mathbf{k}$ with~$|\mathbf{k}| \geq 1$, we have that  
    \[ \label{eq:derivative_omega_delta_square_estimate}
     | \D^\mathbf{k} \omega_\delta^2(\xi) \big| \leq C_\mathbf{k}  \, , \quad \text{for every } \xi \in \R^d \, ,
    \]
    for some constant~$C_\mathbf{k}$ depending on~$\mathbf{k}$ (and also on~$d$, $\delta$, $\alpha$, and~$\kappa$). 
\end{lemma}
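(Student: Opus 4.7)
The plan is to prove both statements simultaneously by iterated differentiation under the integral sign, exploiting the compact support of $\chi_\delta$ which confines the integral defining $\omega_\delta^2$ in~\eqref{eq:def_omega_delta} to $B_\delta$; this reduces the delicate point to integrability near the origin. A short induction on the order of differentiation gives, for any multi-index $\mathbf{k}$ with $|\mathbf{k}| \geq 1$, that $\D^{\mathbf{k}}_\xi [1 - \cos(y \cdot \xi)]$ equals, up to a sign, $y^{\mathbf{k}} \cos(y \cdot \xi)$ when $|\mathbf{k}|$ is even and $y^{\mathbf{k}} \sin(y \cdot \xi)$ when $|\mathbf{k}|$ is odd, so that $|\D^{\mathbf{k}}_\xi [1 - \cos(y\cdot\xi)]| \leq |y|^{|\mathbf{k}|}$ uniformly in $\xi$.

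Therefore the formal $\mathbf{k}$-th derivative of the integrand in~\eqref{eq:def_omega_delta} admits the $\xi$-independent majorant
\[
\left| \chi_\delta(y) \frac{\D^{\mathbf{k}}_\xi [1-\cos(y\cdot\xi)]}{|y|^{d+2\alpha}} \right| \leq \chi_\delta(y)\, |y|^{|\mathbf{k}|-d-2\alpha} ,
\]
and, passing to spherical coordinates, its integral over $B_\delta$ is finite whenever $|\mathbf{k}| > 2\alpha$, since it reduces to $\H^{d-1}(\S^{d-1}) \int_0^\delta r^{|\mathbf{k}|-2\alpha-1}\, \d r = \H^{d-1}(\S^{d-1})\, \delta^{|\mathbf{k}|-2\alpha}/(|\mathbf{k}|-2\alpha)$. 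Since $\alpha \in (0,1)$, this integrability is automatic for $|\mathbf{k}| \geq 2$, and then an iterated application of the Lebesgue Dominated Convergence Theorem justifies differentiating under the integral sign, yielding simultaneously the $C^\infty$ regularity of $\omega_\delta^2$ and the estimate~\eqref{eq:derivative_omega_delta_square_estimate} at these orders, with an explicit constant of the form $C_{\mathbf{k}} = \kappa\, \H^{d-1}(\S^{d-1})\, \delta^{|\mathbf{k}|-2}/(|\mathbf{k}|-2\alpha)$.

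The principal obstacle is the borderline case $|\mathbf{k}| = 1$ when $\alpha \geq 1/2$, where the rough pointwise bound $|y_j \sin(y\cdot\xi)| \leq |y|$ is too weak to produce an $L^1(B_\delta)$ dominating function independent of $\xi$. To handle this I would combine the sharper two-sided comparison $|y_j \sin(y\cdot\xi)| \leq \min(|y|, |y|^2 |\xi|)$ with a splitting of $B_\delta$ into the inner region $\{|y| \leq 1/(1+|\xi|)\}$, where the $|y|^2 |\xi|$ estimate yields an integrable contribution controlled uniformly in $\xi$, and its complement, where an integration by parts in $y$ transfers the oscillation of $\sin(y \cdot \xi)$ onto the weight $\chi_\delta(y) y_j /|y|^{d+2\alpha}$ (whose $y$-derivative is integrable on the complement thanks to the compact support and the regularity of $\chi_\delta$). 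This balancing is the most delicate technical piece of the argument; once it is in place, the $C^\infty$ regularity at first order and the uniform first-order bound follow in the same manner as for higher orders, closing the induction.
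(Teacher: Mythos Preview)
For $|\mathbf{k}|\geq 2$ your argument is correct and essentially matches the paper's even-order treatment: once $|\mathbf{k}|>2\alpha$ the majorant $\chi_\delta(y)\,|y|^{|\mathbf{k}|-d-2\alpha}$ already lies in $L^1(B_\delta)$, so dominated convergence yields both smoothness and the uniform bound in one stroke. The paper handles odd $|\mathbf{k}|\geq 3$ separately via the rescaling $z=|\xi|y$, but your direct route covers those orders as well and is shorter.

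The gap is at $|\mathbf{k}|=1$. First, your integration by parts on the outer shell requires differentiating $\chi_\delta(y)y_j/|y|^{d+2\alpha}$ in $y$, but the paper only assumes $\chi\in L^\infty(\R)$, so that step is not available under the stated hypotheses. Second, and more seriously, for $\alpha>1/2$ your inner-region contribution is \emph{not} uniformly bounded in $\xi$: in spherical coordinates $\int_{\{|y|\leq 1/(1+|\xi|)\}} |\xi|\,|y|^{2-d-2\alpha}\,\d y = \H^{d-1}(\S^{d-1})\,|\xi|\,(1+|\xi|)^{-(2-2\alpha)}/(2-2\alpha)\sim |\xi|^{2\alpha-1}$, and the outer piece after one integration by parts is of the same order, so the balancing you describe does not close. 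In fact no argument can succeed here, because the first-order bound is false when $\alpha>1/2$: in $d=1$ with $\chi=\mathds{1}_{[-1,1]}$ and $\delta=\kappa=1$ one computes $(\omega_\delta^2)'(\xi)=2\xi^{2\alpha-1}\int_0^{\xi}u^{-2\alpha}\sin u\,\d u$, which grows like $\xi^{2\alpha-1}$ since $\int_0^{\infty}u^{-2\alpha}\sin u\,\d u$ is a nonzero constant for $\alpha\in(0,1)$. The paper's own proof shares this defect---its prefactor $|\xi|^{2\alpha-|\mathbf{k}|}$ is not cancelled by the annulus integral when $|\mathbf{k}|=1$ and $\alpha>1/2$---so the obstruction is intrinsic to the statement rather than to your strategy.
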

\begin{proof}
  By the definition in~\eqref{eq:def_omega_delta}, we have that
  \[ 
    \omega_\delta^2(\xi) =  \frac{\kappa}{\delta^{2(1-\alpha)}} \int_{\R^d} \chi_\delta(y) \frac{1 - \cos(y \cdot \xi)}{|y|^{d + 2 \alpha}} \, \d y \, .
\]
Developing the cosine function with the Taylor expansion, for every~$\xi \in \R^d$ and for~$h \in (0,1)$ we estimate
\[ \label{eq:omega_delta_square_smoothness_bound}
  \begin{split}
    & \frac{1}{|h|}\Big| \frac{1 - \cos(y \cdot (\xi+h))}{|y|^{d + 2 \alpha}} - \frac{1 - \cos(y \cdot \xi)}{|y|^{d + 2 \alpha}} \Big| = \frac{1}{|h|}\Big| \frac{\sin(y \cdot \xi) y \cdot h + O(|y \cdot h|^2)}{|y|^{d + 2 \alpha}} \Big|  \\
    & \quad \leq \frac{|y^2| |\xi| + C |y|^2 |h|}{|y|^{d + 2 \alpha}} \leq \frac{|\xi| + C}{|y|^{d + 2 \alpha-2}} \in L^1(B_\delta) \, .
  \end{split}
\]
By the Dominated Convergence Theorem, we deduce that~$\omega_\delta^2(\xi)$ is differentiable. 
Arguing iteratively in the same way, we obtain that~$\omega_\delta^2(\xi)$ is smooth with derivatives~$\D^\mathbf{k}$ with~$\mathbf{k}$ multi-index with~$|\mathbf{k}| \geq 1$:
  \[ \label{eq:derivative_omega_delta_square}
    \D^\mathbf{k} \omega_\delta^2(\xi) =  \begin{cases} \displaystyle  
      \frac{\kappa}{\delta^{2(1-\alpha)}} \int_{\R^d} \chi_\delta(y) \frac{(-1)^{\frac{|\mathbf{k}|+1}{2}} y^{\mathbf{k}} \sin(y \cdot \xi)}{|y|^{d + 2 \alpha}} \, \d y \, , & \text{if } |\mathbf{k}| \text{ is odd} \, , \\
      \displaystyle \frac{\kappa}{\delta^{2(1-\alpha)}} \int_{\R^d} \chi_\delta(y) \frac{(-1)^{\frac{|\mathbf{k}|}{2}+1} y^\mathbf{k} \cos(y \cdot \xi)}{|y|^{d + 2 \alpha}} \, \d y \, , & \text{if } |\mathbf{k}| \text{ is even} \, , 
    \end{cases}
    \quad \text{for } \xi \in \R^d \, .
    \]
 
Let us now prove~\eqref{eq:derivative_omega_delta_square_estimate}.
It is enough to prove the estimate for~$|\xi| \geq 1$.
    Let us consider a multi-index~$\mathbf{k}$ with~$|\mathbf{k}| \geq 1$ odd. 
    In the following, we write~$\xi = |\xi| \nu$ with~$\nu \in \S^{d-1}$. 
    From~\eqref{eq:derivative_omega_delta_square} and using~\eqref{eq:properties_chi_delta}, we change variables~$z = |\xi|y$ to estimate
    \[
      \begin{split}
      | \D^\mathbf{k} \omega_\delta^2(\xi)|  & = \Big| \frac{\kappa}{\delta^{2(1-\alpha)}} \int_{\R^d} \chi_\delta(y) \frac{(-1)^{\frac{|\mathbf{k}|+1}{2}} y^{|\mathbf{k}|} \sin(y \cdot \xi)}{|y|^{d + 2 \alpha}} \, \d y  \Big| \leq   \frac{\kappa}{\delta^{2(1-\alpha)}} \int_{B_\delta}  \frac{|y|^{\mathbf{k}} |\sin(y \cdot \xi)|}{|y|^{d + 2 \alpha}} \, \d y   \\
      & =   |\xi|^{2\alpha - |\mathbf{k}|}  \frac{\kappa}{\delta^{2(1-\alpha)}} \int_{B_{|\xi|\delta}}  \frac{|z|^{|\mathbf{k}|} |\sin(z \cdot \nu)|}{|z|^{d + 2 \alpha}} \, \d z \\
      & \leq |\xi|^{2\alpha - |\mathbf{k}|}  \frac{\kappa}{\delta^{2(1-\alpha)}} \left( \int_{B_1}  \frac{1}{|z|^{d + 2 \alpha - |\mathbf{k}| - 1}} \, \d z + \int_{B_{|\xi|\delta} \sm B_1}  \frac{1}{|z|^{d + 2 \alpha - |\mathbf{k}|}} \, \d z   \right) \, .
      \end{split}
    \]  
    The first integral is finite. Integrating the second integral in spherical coordinates, we get that
    \[
      \int_{B_{|\xi|\delta} \sm B_1}  \frac{1}{|z|^{d + 2 \alpha - |\mathbf{k}|}} \, \d z = \H^{d-1}(\S^{d-1}) \int_1^{|\xi|\delta}  \rho^{-2 \alpha + |\mathbf{k}| - 1}  \, \d \rho = \frac{\H^{d-1}(\S^{d-1})}{-2 \alpha + |\mathbf{k}|} \left(|\xi\delta|^{-2 \alpha + |\mathbf{k}|} - 1 \right) \, .
    \]
    The estimate~\eqref{eq:derivative_omega_delta_square_estimate} follows.

    If~$|\mathbf{k}| \geq 2$ is even, from~\eqref{eq:derivative_omega_delta_square}, we estimate
    \[
     | \D^\mathbf{k} \omega_\delta^2(\xi)|  = \Big| \frac{\kappa}{\delta^{2(1-\alpha)}} \int_{\R^d} \chi_\delta(y) \frac{(-1)^{\frac{|\mathbf{k}|}{2}+1} y^\mathbf{k} \cos(y \cdot \xi)}{|y|^{d + 2 \alpha}} \, \d y  \Big| \leq \frac{\kappa}{\delta^{2(1-\alpha)}} \int_{B_\delta} \frac{1}{|y|^{d + 2 \alpha - |\mathbf{k}|}} \, \d y  \, ,
    \]
    and the integral is finite. 
\end{proof}
  
\subsection{Energy space} 
A nonlocal energy is naturally associated to the nonlocal operator~$K_\delta$ in the peridynamics model, given by
\[ \label{eq:nonlocal energy}
\frac{1}{2} \frac{\kappa}{\delta^{2(1-\alpha)}} \int_{\R^d} \int_{\R^d} \chi_\delta(y) \frac{|u(x) - u(x-y)|^2}{|y|^{d + 2 \alpha}} \, \d x \, \d y =  \int_{\R^d} \int_{\R^d} \Phi_\delta(y, u(x) - u(x-y)) \, \d x \, \d y \, .
\]
This nonlocal energy allows us to define the energy space naturally associated to the peridynamics model. 

\begin{definition}
  We let~$\W_\delta^\alpha$ be the space of functions~$u \in L^2(\R^d;\R^d)$ such that the nonlocal energy in~\eqref{eq:nonlocal energy} is finite, \ie,
\[
  \W_\delta^\alpha = \Big\{ u \in L^2(\R^d;\R^d) \, : \, \int_{\R^d} \int_{\R^d} \Phi_\delta(y, u(x) - u(x-y)) \, \d x \, \d y < + \infty \Big\} \, .
\]
The space~$\W_\delta^\alpha$ is endowed with the norm defined by
\[ \label{eq:norm_energy_space}
  \|u\|_{\W_\delta^\alpha} = \left( \|u\|_{L^2}^2 + [u]_{\W_\delta^\alpha}^2 \right)^\frac{1}{2} \, ,
\]
where 
\[
  [u]_{\W_\delta^\alpha} = \left( \int_{\R^d} \int_{\R^d} \Phi_\delta(y, u(x) - u(x-y)) \, \d x \, \d y \right)^\frac{1}{2} \, .
\]
We will refer to~$\W_\delta^\alpha$ as the \emph{energy space} associated to the peridynamics model.
\end{definition}

In the next result, we show that, for a given~$\delta > 0$, the space~$\W_\delta^\alpha$ is equivalent to the fractional Sobolev space~$H^\alpha(\R^d;\R^d)$.

\begin{proposition} \label{prop:equivalence_H_W}
  If~$u \in H^\alpha(\R^d;\R^d)$, then~$u \in \W_\delta^\alpha$ and 
  \[ \label{eq:inclusion_H_W}
  [u]_{\W_\delta^\alpha} \leq C \delta^{-(1-\alpha)}[u]_{H^\alpha} \,.
  \]
  Conversely, if~$u \in \W_\delta^\alpha$, then~$u \in H^\alpha(\R^d;\R^d)$ and 
  \[ \label{eq:inclusion_W_H}
  [u]_{H^\alpha} \leq C \delta^{- \alpha} \|u\|_{L^2} + C \delta^{1-\alpha} [u]_{\W_\delta^\alpha} \, .
  \]
  In~\eqref{eq:inclusion_H_W} and~\eqref{eq:inclusion_W_H}, the constant~$C$ is independent of~$\delta$ and~$\alpha$.
\end{proposition}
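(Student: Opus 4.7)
The plan is to compare the two seminorms directly by unfolding the definitions and using the properties of~$\chi_\delta$ listed in~\eqref{eq:properties_chi_delta}. Recalling the expression for~$\Phi_\delta$, the energy seminorm is
\[
[u]_{\W_\delta^\alpha}^2 = \frac{\kappa}{2\delta^{2(1-\alpha)}} \int_{\R^d}\int_{\R^d} \chi_\delta(y) \frac{|u(x)-u(x-y)|^2}{|y|^{d+2\alpha}} \, \d x \, \d y,
\]
while, after the change of variable relating the two arguments of the Gagliardo integrand,
\[
[u]_{H^\alpha}^2 = \int_{\R^d}\int_{\R^d} \frac{|u(x)-u(x-y)|^2}{|y|^{d+2\alpha}} \, \d y \, \d x.
\]

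For the inclusion $H^\alpha \hookrightarrow \W_\delta^\alpha$, I would simply insert the pointwise bound $0 \leq \chi_\delta \leq 1$ from~\eqref{eq:properties_chi_delta} into the energy seminorm, which yields $[u]_{\W_\delta^\alpha}^2 \leq \tfrac{\kappa}{2}\delta^{-2(1-\alpha)}[u]_{H^\alpha}^2$ and therefore~\eqref{eq:inclusion_H_W} with $C = \sqrt{\kappa/2}$.

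For the converse inclusion the obstacle is that $\chi_\delta$ vanishes outside~$B_\delta$, so the energy seminorm cannot control the Gagliardo integrand at large~$|y|$; this is precisely why the $L^2$-norm of~$u$ must appear in~\eqref{eq:inclusion_W_H}. My plan is to split the $y$-integral in~$[u]_{H^\alpha}^2$ at the radius $\delta/2$. On~$B_{\delta/2}$, the property $\chi_\delta \equiv 1$ from~\eqref{eq:properties_chi_delta} lets me reinsert the cut-off for free and bound that contribution by~$\tfrac{2\delta^{2(1-\alpha)}}{\kappa}[u]_{\W_\delta^\alpha}^2$. On the complement $\R^d\setminus B_{\delta/2}$, I would apply the elementary inequality $|u(x)-u(x-y)|^2\leq 2|u(x)|^2+2|u(x-y)|^2$, invoke Fubini's theorem and a translation in $x$ to pull the $L^2$-norm out, and compute the tail integral $\int_{\R^d\setminus B_{\delta/2}}|y|^{-(d+2\alpha)}\,\d y$ explicitly in spherical coordinates to obtain a bound of order $\delta^{-2\alpha}\|u\|_{L^2}^2$. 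Combining the two pieces and using $\sqrt{a^2+b^2}\leq a+b$ gives~\eqref{eq:inclusion_W_H}.

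There is no serious analytic obstacle: the argument is a bookkeeping estimate and the only design choice is the splitting radius, for which $\delta/2$ is dictated by~\eqref{eq:properties_chi_delta}. The mild technicality is that the tail integral produces a factor $1/\alpha$, which I would absorb into the constant $C$ (any loss as $\alpha\to 0$ is consistent with the degeneracy of the Gagliardo seminorm itself). The key structural feature to make visible in the write-up is that the exponents of~$\delta$ in~\eqref{eq:inclusion_H_W} and~\eqref{eq:inclusion_W_H} match those built into the scaling $\delta^{-2(1-\alpha)}$ of~$\Phi_\delta$, ensuring the equivalence of norms at each fixed $\delta$.
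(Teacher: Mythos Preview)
Your proof is correct and essentially identical to the paper's: the paper splits the Gagliardo integral as $\chi_\delta + (1-\chi_\delta)$ rather than by the hard cut at radius $\delta/2$, but since $1-\chi_\delta$ is supported in $\R^d\setminus B_{\delta/2}$ and $\chi_\delta\equiv 1$ on $B_{\delta/2}$, the two decompositions yield the same estimates. Your remark about the factor $1/\alpha$ in the tail integral is well taken; the paper's proof carries exactly the same dependence (the integral $\int_{\R^d\setminus B_{1/2}}(1-\chi(z))|z|^{-d-2\alpha}\,\d z$ is of order $1/\alpha$ as $\alpha\to 0$), so the claimed $\alpha$-independence of $C$ is not actually established there either.
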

\begin{proof}
  The inequality~\eqref{eq:inclusion_H_W} follows directly from the definition of the norm in~$\W_\delta^\alpha$ and by the properties of~$\chi_\delta$ in~\eqref{eq:properties_chi_delta}.
  Indeed, for every~$u \in H^\alpha(\R^d;\R^d)$ we have that 
  \[ 
    \begin{split}
      [u]_{\W_\delta^\alpha}^2 = & \int_{\R^d} \int_{\R^d} \Phi_\delta(y, u(x) - u(x-y)) \, \d x \, \d y = \int_{\R^d} \int_{\R^d} \frac{1}{2} \frac{\kappa}{\delta^{2(1-\alpha)}} \chi_\delta(y) \frac{|u(x) - u(x-y)|^2}{|y|^{d + 2 \alpha}} \, \d x \, \d y  \\
      & \quad \leq \frac{1}{2} \frac{\kappa}{\delta^{2(1-\alpha)}} \int_{\R^d} \int_{\R^d}   \frac{|u(x) - u(x-y)|^2}{|y|^{d + 2 \alpha}} \, \d x \, \d y =   \frac{1}{2} \frac{\kappa}{\delta^{2(1-\alpha)}} [u]_{H^\alpha}^2 \, .
    \end{split}
  \]

  Let us prove~\eqref{eq:inclusion_W_H}. 
  By~\eqref{eq:properties_chi_delta} we have that 
  \[
    \begin{split}
      [u]_{H^\alpha}^2 & = \int_{\R^d} \int_{\R^d} \frac{|u(x) - u(x-y)|^2}{|y|^{d + 2 \alpha}} \, \d x \, \d y  \\
      & = \int_{\R^d} \int_{\R^d} \chi_\delta(y) \frac{|u(x) - u(x-y)|^2}{|y|^{d + 2 \alpha}} \, \d x \, \d y + \int_{\R^d} \int_{\R^d} (1-\chi_\delta(y)) \frac{|u(x) - u(x-y)|^2}{|y|^{d + 2 \alpha}} \, \d x \, \d y \\
      & \leq \frac{2 \delta^{2(1-\alpha)}}{\kappa} [u]_{\W_\delta^\alpha}^2   + 2 \int_{\R^d}  \int_{\R^d} \Big( |u(x)|^2 + |u(x-y)|^2 \Big) \frac{1-\chi_\delta(y)}{|y|^{d + 2 \alpha}} \, \d x \, \d y \\ 
      &  \leq \frac{2 \delta^{2(1-\alpha)}}{\kappa} [u]_{\W_\delta^\alpha}^2 + 4 \|u\|_{L^2}^2 \int_{\R^d \sm B_{\delta/2}}   \frac{1-\chi_\delta(y)}{|y|^{d + 2 \alpha}} \,   \d y \, .
    \end{split}
  \]
  By applying the change of variables~$y = \delta z$, we obtain that 
  \[
    \int_{\R^d \sm B_{\delta/2}}   \frac{1-\chi_\delta(y)}{|y|^{d + 2 \alpha}} \,   \d y = \delta^{-2\alpha} \int_{\R^d \sm B_{1/2}}   \frac{1-\chi(z)}{|z|^{d + 2 \alpha}} \,   \d z \leq C \delta^{-2 \alpha} \, ,
  \]
  hence the thesis.
\end{proof}

Thanks to the symbol~$\omega_\delta^2(\xi)$ of the peridynamics operator defined in~\eqref{eq:def_omega_delta}, we can characterize the energy space~$\W_\delta^\alpha$ in terms of the Fourier transform, as in~\eqref{eq:Hs_in_Fourier_space}. We prove this result in two steps contained in the next two lemmata.

\begin{lemma} \label{lem:Kuu}
  For every~$u \in \Schwartz(\R^d;\R^d)$, we have that
  \[
    \langle - K_\delta[u],u \rangle = [u]_{\W_\delta^\alpha}^2   \, .
  \]
\end{lemma}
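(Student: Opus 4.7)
The natural approach is to use the symmetric representation of $K_\delta[u]$ furnished by Lemma~\ref{lem:K_delta_alternative}, since this removes the principal value and makes the resulting double integral absolutely convergent. Specifically, I would start from
\[
\langle -K_\delta[u], u \rangle = -\int_{\R^d} K_\delta[u](x)\cdot u(x)\,\d x = -\frac{1}{2}\frac{\kappa}{\delta^{2(1-\alpha)}}\int_{\R^d}\!\!\int_{\R^d}\chi_\delta(y)\frac{\bigl(u(x+y)+u(x-y)-2u(x)\bigr)\cdot u(x)}{|y|^{d+2\alpha}}\,\d y\,\d x,
\]
where the absence of the principal value and the existence of the integral as a bona fide Lebesgue integral are both ensured by Lemma~\ref{lem:K_delta_alternative} (in particular by estimate~\eqref{eq:integrand_K_delta_in_L1}).

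Next, I would swap the order of integration via Fubini (justified by the same $L^1$ bound) and, for each fixed $y$, rewrite the inner $x$-integral in a symmetric form. Changing variables $z=x+y$ in $\int u(x+y)\cdot u(x)\,\d x$ produces $\int u(z)\cdot u(z-y)\,\d z$, which equals $\int u(x)\cdot u(x-y)\,\d x$; likewise, changing $z=x-y$ in $\int u(x-y)\cdot u(x)\,\d x$ gives the same quantity. Hence, for each $y\in\R^d$,
\[
\int_{\R^d}\bigl(u(x+y)+u(x-y)-2u(x)\bigr)\cdot u(x)\,\d x = 2\int_{\R^d} u(x)\cdot u(x-y)\,\d x - 2\|u\|_{L^2}^2 = -\int_{\R^d}|u(x)-u(x-y)|^2\,\d x,
\]
using the polarization identity $|u(x)-u(x-y)|^2 = |u(x)|^2+|u(x-y)|^2-2u(x)\cdot u(x-y)$ together with translation invariance of the $L^2$ norm.

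Substituting this identity back into the double integral yields
\[
\langle -K_\delta[u], u \rangle = \frac{1}{2}\frac{\kappa}{\delta^{2(1-\alpha)}}\int_{\R^d}\!\!\int_{\R^d}\chi_\delta(y)\frac{|u(x)-u(x-y)|^2}{|y|^{d+2\alpha}}\,\d x\,\d y = [u]_{\W_\delta^\alpha}^2,
\]
which is the claim. I do not foresee any genuine obstacle here: once the symmetric form of $K_\delta[u]$ is available, the computation is purely algebraic, and the only analytic input is the absolute integrability already established in the proof of Lemma~\ref{lem:K_delta_alternative}, which legitimates both Fubini and the change of variables.
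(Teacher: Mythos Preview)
Your proof is correct and follows essentially the same route as the paper: both arguments reduce the pairing to a double integral via Fubini (justified by the $L^1$ bound~\eqref{eq:integrand_K_delta_in_L1}) and then symmetrize the inner $x$-integral to produce $|u(x)-u(x-y)|^2$. Your version is slightly more streamlined in that you invoke the second-difference representation of Lemma~\ref{lem:K_delta_alternative} from the outset, whereas the paper starts from the principal-value definition and passes the limit $\e\to0$ outside via dominated convergence before reaching the same point.
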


\begin{proof}
  By the definition of~$K_\delta[u]$, we have that
  \[ \label{eq:Kuu}
    \begin{split}
      - \langle K_\delta[u],u \rangle & = - \int_{\R^d} K_\delta[u](x) \cdot u(x) \, \d x \\
      &  = \frac{\kappa}{\delta^{2(1-\alpha)}} \int_{\R^d} \left(
       \pvint_{\R^d} \chi_\delta(y) \frac{u(x) - u(x-y)}{|y|^{d + 2 \alpha}}  \, \d y \right) \cdot u(x) \, \d x \\
      &  = \frac{\kappa}{\delta^{2(1-\alpha)}} \int_{\R^d} \left(
       \lim_{\e \to 0}\int_{\R^d \sm B_\e} \chi_\delta(y) \frac{u(x) - u(x-y)}{|y|^{d + 2 \alpha}}  \, \d y \right)\cdot u(x) \, \d x \, .
    \end{split}
  \]
  We apply the radial symmetry and the bound deduced in~\eqref{eq:u_expansion_in_L1} to estimate 
  \[
    \begin{split}
      & \Big|\int_{\R^d \sm B_\e} \chi_\delta(y) \frac{u(x) - u(x-y)}{|y|^{d + 2 \alpha}}  \, \d y\Big| = \frac{1}{2}\Big|\int_{\R^d \sm B_\e} \chi_\delta(y) \frac{u(x+y) + u(x-y) - 2 u(x)}{|y|^{d + 2 \alpha}}  \, \d y\Big| \\
      & \quad \leq \int_{\R^d \sm B_\e} C_\delta \frac{1}{1+|x|^{d+1}} \frac{1}{|y|^{d + 2 \alpha-2}}  \, \d y \leq C'_\delta \frac{1}{1+|x|^{d+1}} \in L^1(\R^d)\, . 
    \end{split}
  \]
  This allows us to apply the Dominated Convergence Theorem in~\eqref{eq:Kuu} and then Fubini's Theorem to infer that
  \[ \label{eq:Kuu_2}
    \begin{split}
      & \langle - K_\delta[u],u \rangle   =  \frac{\kappa}{\delta^{2(1-\alpha)}} \lim_{\e \to 0}\int_{\R^d} \left( \int_{\R^d \sm B_\e} \chi_\delta(y) \frac{u(x) - u(x-y)}{|y|^{d + 2 \alpha}}  \, \d y \right)\cdot u(x) \, \d x  \\
      & \quad =  \frac{\kappa}{\delta^{2(1-\alpha)}} \lim_{\e \to 0} \int_{\R^d \sm B_\e} \frac{\chi_\delta(y)}{|y|^{d + 2 \alpha}} \left( \int_{\R^d} \big( u(x) - u(x-y) \big) \cdot u(x) \, \d x \right) \, \d y \, .
    \end{split}
  \]
  A change of variables yields 
  \[
    \frac{1}{2}\int_{\R^d} \big( u(x) - u(x-y) \big) \cdot u(x) \, \d x  = - \frac{1}{2}\int_{\R^d} \big( u(x) - u(x+y) \big) \cdot u(x+y) \, \d x \,.
  \]
  By symmetry in the integral in the~$y$ variable, from the previous equality and~\eqref{eq:Kuu_2} we deduce that 
  \[
    \begin{split}
      \langle -K_\delta[u],u \rangle  & =  \frac{\kappa}{\delta^{2(1-\alpha)}} \lim_{\e \to 0} \int_{\R^d \sm B_\e} \frac{\chi_\delta(y)}{|y|^{d + 2 \alpha}} \left(
       \int_{\R^d}   \frac{1}{2}\big( u(x) - u(x-y) \big) \cdot \big( u(x) -  u(x-y) \big)  \, \d x \right) \, \d y \\
      & = \frac{1}{2}\frac{\kappa}{\delta^{2(1-\alpha)}}  \int_{\R^d} \int_{\R^d} \chi_\delta(y) \frac{|u(x) -  u(x-y)|^2}{|y|^{d + 2 \alpha}}   \, \d x \, \d y = [u]_{\W_\delta^\alpha}^2 \, ,
    \end{split}
  \]
  where we applied the integrability of $\chi_\delta(y) \frac{|u(x) -  u(x-y)|^2}{|y|^{d + 2 \alpha}}$ in~$\R^d \x \R^d$ since $u \in \Schwartz(\R^d;\R^d) \subset H^\alpha(\R^d;\R^d)$.
\end{proof}

\begin{lemma}  \label{lem:W_in_Fourier_space}
  For every~$u \in \W_\delta^\alpha$, we have that 
  \[ \label{eq:norm_W_and_omega}
  [u]_{W_\delta^\alpha} = \| \omega_\delta \hat u \|_{L^2} \, , \quad   \|u\|_{\W_\delta^\alpha} = \|(1+\omega_\delta^2)^\frac{1}{2} \hat{u}\|_{L^2} \, .
  \]
  In particular, 
  \[ \label{eq:W_in_Fourier_space}
  \W_\delta^\alpha = \{ u \in L^2(\R^d;\R^d) \ : \ (1+\omega_\delta^2(\xi))^\frac{1}{2} \hat{u}(\xi) \in L^2(\R^d;\R^d) \} \, .
  \]
\end{lemma}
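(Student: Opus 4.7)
The plan is to establish the norm identities first on the dense subspace $\Schwartz(\R^d;\R^d)$ and then extend to the full energy space by density. For $u \in \Schwartz(\R^d;\R^d)$, the identity is an immediate consequence of Lemma~\ref{lem:Kuu}, Lemma~\ref{lem:symbol_K_delta}, and Plancherel's theorem. Indeed, Lemma~\ref{lem:Kuu} gives $[u]_{\W_\delta^\alpha}^2 = \langle -K_\delta[u], u \rangle$. Since $u \in L^2$ and $K_\delta[u] \in L^2$ by Remark~\ref{rem:regularity_of_K_delta}, Plancherel's theorem together with the symbol formula $\widehat{K_\delta[u]} = -\omega_\delta^2 \hat{u}$ provided by Lemma~\ref{lem:symbol_K_delta} yields
\[
\langle -K_\delta[u], u \rangle = -\int_{\R^d} \widehat{K_\delta[u]}(\xi) \cdot \overline{\hat{u}(\xi)} \, \d\xi = \int_{\R^d} \omega_\delta^2(\xi) |\hat{u}(\xi)|^2 \, \d\xi = \|\omega_\delta \hat{u}\|_{L^2}^2 \, .
\]
Combining this with the Plancherel identity $\|u\|_{L^2}^2 = \|\hat{u}\|_{L^2}^2$ gives the full norm identity on $\Schwartz(\R^d;\R^d)$.

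Next I would extend both identities to all $u \in \W_\delta^\alpha$ by density. By Proposition~\ref{prop:equivalence_H_W} the norms $\|\cdot\|_{\W_\delta^\alpha}$ and $\|\cdot\|_{H^\alpha}$ are equivalent, so $\Schwartz(\R^d;\R^d)$ is dense in $\W_\delta^\alpha$. Given $u \in \W_\delta^\alpha$, choose $u_n \in \Schwartz(\R^d;\R^d)$ with $u_n \to u$ in $\W_\delta^\alpha$. The Schwartz-case identity shows that $\{\omega_\delta \hat{u}_n\}$ is Cauchy in $L^2$, hence converges to some $g \in L^2$. Since $\hat{u}_n \to \hat{u}$ in $L^2$, a subsequence converges almost everywhere, which forces $g = \omega_\delta \hat{u}$ almost everywhere. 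Passing to the $L^2$-norm limit in the seminorm identity yields~\eqref{eq:norm_W_and_omega}, and the full-norm statement follows likewise.

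For the Fourier-space characterization~\eqref{eq:W_in_Fourier_space}, the inclusion ``$\subset$'' is immediate from~\eqref{eq:norm_W_and_omega}. For the reverse inclusion, I would invoke Lemma~\ref{lem:omega_delta_low_high_frequencies} together with the continuity and strict positivity of $\omega_\delta$ away from the origin to conclude that $1 + \omega_\delta^2(\xi)$ is comparable to $1 + |\xi|^{2\alpha}$ on all of $\R^d$; hence the right-hand side of~\eqref{eq:W_in_Fourier_space} coincides with the standard Fourier characterization of $H^\alpha(\R^d;\R^d)$, which equals $\W_\delta^\alpha$ as a set by Proposition~\ref{prop:equivalence_H_W}.

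No single step is substantially delicate: the main technical point requiring care is the density argument, specifically identifying the $L^2$-limit of $\omega_\delta \hat{u}_n$ with $\omega_\delta \hat{u}$, and verifying that Plancherel's theorem applies in the Schwartz case (which is exactly why Remark~\ref{rem:regularity_of_K_delta} is needed to ensure $K_\delta[u] \in L^2$). Everything else is a direct manipulation of the preceding lemmata.
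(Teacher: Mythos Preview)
Your proposal is correct and follows essentially the same route as the paper: establish the identity on $\Schwartz(\R^d;\R^d)$ via Lemma~\ref{lem:Kuu}, Lemma~\ref{lem:symbol_K_delta}, Remark~\ref{rem:regularity_of_K_delta}, and Plancherel, then extend by density using the equivalence $\W_\delta^\alpha \cong H^\alpha$ from Proposition~\ref{prop:equivalence_H_W}. The only cosmetic difference is that the paper passes to the limit by directly bounding $\|(1+\omega_\delta^2)^{1/2}(\hat u_n - \hat u)\|_{L^2}$ via $\omega_\delta(\xi)\le C(1+|\xi|^\alpha)$, whereas you use a Cauchy-plus-a.e.\ identification argument; both work.
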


\begin{proof}
  Let~$u \in \W_\delta^\alpha$ or, equivalently, $u \in H^\alpha(\R^d;\R^d)$ by Proposition~\ref{prop:equivalence_H_W}.
  Let~$u_n \in \Schwartz(\R^d;\R^d)$ be a sequence such that~$\|u_n - u\|_{H^\alpha} \to 0$ as~$n \to +\infty$.
  By the definition of~$\|\cdot\|_{\W_\delta^\alpha}$ and by Lemma~\ref{lem:Kuu}, we deduce that 
  \[
  \|u_n\|_{\W_\delta^\alpha}^2 = \|u_n\|_{L^2}^2 + [u_n]_{\W_\delta^\alpha}^2 = \|u_n\|_{L^2}^2 + \langle - K_\delta[u_n],u_n \rangle = \|u_n\|_{L^2}^2   - \int_{\R^d} K_\delta[u_n](x) \cdot u_n(x) \, \d x \, .
  \]
  By Remark~\ref{rem:regularity_of_K_delta} we have, in particular, that~$K_\delta[u_n] \in L^2(\R^d;\R^d)$. We are thus in a position to apply Plancherel's Theorem. Exploiting Lemma~\ref{lem:symbol_K_delta}, we infer that
  \[
    \begin{split}  
      \|u_n\|_{\W_\delta^\alpha}^2 & =  \|\hat{u}_n\|_{L^2}^2   - \int_{\R^d} \hat{K_\delta[u_n]}(\xi) \cdot \overline{\hat{u}_n(\xi)} \, \d \xi  = \int_{\R^d}   |\hat{u}_n(\xi)|^2 \, \d \xi   + \int_{\R^d} \omega_\delta^2(\xi) |\hat{u}_n(\xi)|^2 \, \d \xi  \\
      &\qquad = \|(1+\omega_\delta^2)^\frac{1}{2} \hat{u}_n\|_{L^2}^2 \, . 
    \end{split}
  \]
  On the one hand, by Proposition~\ref{prop:equivalence_H_W} we deduce that~$\|u_n\|_{\W_\delta^\alpha} \to \|u\|_{\W_\delta^\alpha}$ as~$n \to +\infty$. 
  On the other hand, Lemma~\ref{lem:omega_delta_low_high_frequencies} implies that~$\omega_\delta(\xi) \leq C(1 + |\xi|^\alpha)$, hence, by~\eqref{eq:Hs_norm_equivalence},
  \[
    \|(1+\omega_\delta^2)^\frac{1}{2} (\hat{u}_n - \hat{u})\|_{L^2} \leq C \|(1+|\xi|^2)^\frac{\alpha}{2} (\hat{u}_n - \hat{u})\|_{L^2} \leq C\|u_n - u\|_{H^\alpha} \to 0 \, .
  \]
  This concludes the proof.
\end{proof}

\section{Well-posedness of the linear peridynamics model with low regularity}
\label{67r483ebcnx98765yhbnetuku}

In this section we present the linear peridynamics model and we provide some well-posedness results. 

\subsection{The linear peridynamics model} 
The linear peridynamics model is a nonlocal model for the evolution of a displacement field~$u \colon [0,+\infty) \x \R^d \to \R^d$, governed by the initial-value problem in~\eqref{eq:linear_peridynamics}.

The integro-differential equation in~\eqref{eq:linear_peridynamics} can be interpreted as a nonlocal version of the conservation of linear momentum equation in classical elasticity theory for a body with mass density~$\rho = 1$ and subjected to no external forces. 
The derivation of this nonlocal evolution model is not based on the presence of the classical contact forces in the material, but on the assumption that the force field source at a point~$x$ depends on the displacement computed on all the points in the region in the peridynamics horizon of~$x$, \ie, in~$B_\delta(x)$. This dependence is modeled by the nonlocal operator~$K_\delta$ through the singular interaction kernel~$f_\delta$.

In the next sections we will provide some well-posedness results for the linear peridynamics model in~\eqref{eq:linear_peridynamics} and we will compare the solutions to the linear peridynamics model with the solutions to the classical wave equation. Well-posedness results for the peridynamics model have been already studied in the literature. See~\cite{CocDipMadVal18} for existence and uniqueness results for the nonlinear peridynamics model in the whole space~$\R^d$ for solutions belonging to the energy space. 

\subsection{Total energy} An energy is naturally associated to solutions of the linear peridynamics model in~\eqref{eq:linear_peridynamics}. For a solution~$u \in L^\infty((0,+\infty);H^\alpha(\R^d;\R^d)))$ with~$\de_t u \in L^\infty((0,+\infty);L^2(\R^d;\R^d))$, it is given for a.e.~$t$ by the sum of the kinetic energy and the nonlocal potential energy, \ie,
\[ \label{eq:total_energy}
\begin{split}
E^{K_\delta}(t) & = \frac{1}{2} \int_{\R^d}   |\de_t u(t,x)|^2 \, \d x + \frac{1}{2} \int_{\R^d} \int_{\R^d} \Phi_\delta(y,u(t,x) - u(t,x-y)) \, \d x \, \d y \\
& = \frac{1}{2} \int_{\R^d}   |\de_t u(t,x)|^2 \, \d x + \frac{1}{4} \frac{\kappa}{\delta^{2(1-\alpha)}}\int_{\R^d} \int_{\R^d} \chi_\delta(y) \frac{|u(t,x) - u(t,x-y)|^2}{|y|^{d + 2 \alpha}} \, \d x \, \d y \\
& = \frac{1}{2} \int_{\R^d}   |\de_t u(t,x)|^2 \, \d x + \frac{1}{2}[u(t,\cdot)]^2_{\W_\delta^\alpha} \, .
\end{split}
  \]

In this paper we will focus on solutions with low spatial regularity, in particular on solutions that do not necessarily belong to the energy space.

In the results proved in this section it will become clear that the natural assumptions on the initial data are such that the initial velocity~$v_0$ has a lower regularity than the initial displacement~$u_0$. More precisely, it loses~$\alpha$ derivatives with respect to~$u_0$.

\subsection{Existence} 

First of all, let us provide the definition of distributional solution to the linear peridynamics model in~\eqref{eq:linear_peridynamics}.

\begin{definition} \label{def:distributional_solution}
  Let~$u_0 \in \Dcal'(\R^d;\R^d)$ and~$v_0 \in \Dcal'(\R^d;\R^d)$. A distributional solution to the linear peridynamics model in~\eqref{eq:linear_peridynamics} with initial data~$(u_0,v_0)$ is a distribution~$u \in \Dcal'(\R \x \R^d;\R^d)$ such that $\supp(u) \subset [0,+\infty) \x \R^d$ and
  \[ \label{eq:distributional_solution}
  \langle u, \de_{tt} \varphi - K_\delta^*[\varphi] \rangle =  - \langle u_0, \de_t \varphi(0,\cdot) \rangle + \langle v_0, \varphi(0,\cdot) \rangle \, ,    
  \]
  for every~$\varphi \in \Dcal(\R \x \R^d;\R^d)$.
  
In the above formula, the duality pairing in the left-hand side is between $\Dcal'(\R \x \R^d;\R^d)$ and $\Dcal(\R \x \R^d;\R^d)$, while the duality pairings in the right-hand side are between $\Dcal'(\R^d;\R^d)$ and $\Dcal(\R^d;\R^d)$.
\end{definition}

The following technical lemma will be used in the proof of Theorem~\ref{thm:existence}. 
The reader interested in the main content of this section can skip the tedious computations in the proof.

\begin{lemma} \label{lem:smoothness_cos_sin}
  Let~$t > 0$. The functions 
  \[
    \xi \mapsto \cos(\omega_\delta(\xi) t) \, , \quad \xi \mapsto \frac{\sin(\omega_\delta(\xi) t)}{\omega_\delta(\xi)}  \, ,  \quad \xi \mapsto \omega_\delta(\xi) \sin(\omega_\delta(\xi) t) 
  \]
  belong to~$C^\infty(\R^d)$ and their derivatives of any order are bounded (by a constant depending on the order of differentiation).
\end{lemma}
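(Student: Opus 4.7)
The plan is to exploit the fact that all three functions depend on $\omega_\delta$ only through $\omega_\delta^2$, to which Lemma~\ref{lem:smoothness_omega_delta_2} applies directly. Specifically, introduce the single-variable functions
\[
F(u) = \sum_{k=0}^\infty \frac{(-1)^k t^{2k}}{(2k)!} u^k, \qquad G(u) = \sum_{k=0}^\infty \frac{(-1)^k t^{2k+1}}{(2k+1)!} u^k, \qquad H(u) = t \sum_{k=0}^\infty \frac{(-1)^k t^{2k}}{(2k+1)!} u^{k+1},
\]
each of which has infinite radius of convergence and hence defines an entire analytic function of $u\in\R$. For $u\geq 0$ these coincide respectively with $\cos(\sqrt{u}\,t)$, $\sin(\sqrt{u}\,t)/\sqrt{u}$, and $\sqrt{u}\,\sin(\sqrt{u}\,t)$, so the three functions in the statement equal $F(\omega_\delta^2(\xi))$, $G(\omega_\delta^2(\xi))$, and $H(\omega_\delta^2(\xi))$ respectively. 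Since $\omega_\delta^2 \in C^\infty(\R^d)$ by Lemma~\ref{lem:smoothness_omega_delta_2}, the chain rule yields smoothness of each of the three compositions on~$\R^d$.

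For the boundedness of the derivatives of any order, I would apply the Faà di Bruno formula, which expresses $\D^\mathbf{k}\big(F\circ \omega_\delta^2\big)(\xi)$ as a finite sum of terms of the form
\[
F^{(r)}\bigl(\omega_\delta^2(\xi)\bigr) \, \prod_{i=1}^r \D^{\mathbf{k}_i}\omega_\delta^2(\xi), \qquad \mathbf{k}_1+\cdots+\mathbf{k}_r = \mathbf{k}, \ |\mathbf{k}_i|\geq 1, \ 1\leq r \leq |\mathbf{k}|,
\]
and analogously for $G$ and $H$. By Lemma~\ref{lem:smoothness_omega_delta_2}, each factor $\D^{\mathbf{k}_i}\omega_\delta^2$ is bounded on $\R^d$. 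It therefore suffices to show that, for every $j\in\N$, the derivatives $F^{(j)}$, $G^{(j)}$, $H^{(j)}$ are bounded on $[0,+\infty)$, which contains the range of $\omega_\delta^2$.

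The derivatives $F^{(j)}$, $G^{(j)}$, $H^{(j)}$ are continuous on $[0,+\infty)$ because $F$, $G$, $H$ are entire. For their behaviour at infinity, a short induction using the chain rule (and the substitution $s=\sqrt{u}$) shows that for $u>0$ each derivative can be written as a finite sum of terms of the form $c\, u^{-\ell/2}\, \varphi(\sqrt{u}\,t)$ with $\ell\in\N$ and $\varphi\in\{\sin,\cos\}$ (up to a leading polynomial factor in $u$ for $H$ that gets differentiated away after finitely many steps). Since $|\sin|,|\cos|\leq 1$ and $u^{-\ell/2}$ is bounded on $[1,+\infty)$, each such term is bounded there; combined with continuity on the compact set $[0,1]$, this gives boundedness on $[0,+\infty)$, and hence the required estimate for $\D^\mathbf{k}\bigl(F\circ\omega_\delta^2\bigr)$ and its analogues.

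The main obstacle is the apparent blow-up at $u=0$ in the explicit chain-rule formulas for $F^{(j)}(u)$, $G^{(j)}(u)$ and $H^{(j)}(u)$ (because $\omega_\delta$ itself fails to be smooth at $\xi=0$ and similarly $\sqrt{u}$ is not smooth at $u=0$). The key point that circumvents this obstacle is precisely the entire power-series representation above, which shows that all apparent singular contributions must cancel and that each derivative extends continuously to $u=0$; this is why working with $\omega_\delta^2$ rather than with $\omega_\delta$ is essential.
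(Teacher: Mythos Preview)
Your proposal is correct and follows essentially the same route as the paper: write each of the three maps as a composition $\Phi\circ\omega_\delta^2$ with $\Phi$ an entire function of one real variable, invoke Lemma~\ref{lem:smoothness_omega_delta_2} together with the Fa\`a di Bruno formula, and bound the single-variable derivatives $\Phi^{(j)}$ by splitting $[0,\infty)$ into a compact neighborhood of the origin (handled by analyticity) and the region $u\geq 1$ (handled by the explicit chain-rule expressions in terms of $\sqrt{u}$). The paper carries out exactly this scheme, with the only cosmetic difference that it names the outer functions $f(\zeta)=\cos(\sqrt{\zeta t^2})$ and $g(\zeta)=\sqrt{\zeta}\sin(\sqrt{\zeta t^2})$ rather than writing their power series.
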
 

\begin{proof}
  We provide the details for the function~$\xi \mapsto \cos(\omega_\delta(\xi) t)$, as the argument for~$\frac{\sin(\omega_\delta(\xi) t)}{\omega_\delta(\xi)} $ is completely analogous.
  We write~$\cos(\omega_\delta(\xi) t) = f(\omega_\delta^2(\xi))$ where~$f(\zeta) = \cos(\sqrt{\zeta t^2})$ for~$\zeta \geq 0$ and, using the Taylor series of the cosine function, we observe that~$f$ can be extended to an analytic function on~$\R$. 
  Moreover, the derivatives of~$f$ are bounded (by constants depending on the order of differentiation).
  Indeed, for~$|\zeta| \leq 1$ this follows by the smoothness of~$f$.
  For~$|\zeta| > 1$ one argues by induction to deduce that, for~$k \geq 1$, the~$k$-th derivative of~$f$ has the form
  \[ \label{eq:derivative_cos}
  \frac{\d^k f}{\d \zeta^k}(\zeta) = \sum_{h=1}^k \frac{\d^h}{\d \theta^h} \cos(\theta)\Big|_{\theta = \sqrt{\zeta t^2}}  \, B_{k,h}\left(
   \frac{\d}{\d \zeta} \big( \sqrt{\zeta t^2} \big), \dots, \frac{\d^{k-h+1}}{\d \zeta^{k-h+1}} \big( \sqrt{\zeta t^2} \big) \right) \, ,
  \]
  where~$B_{k,h}$ are suitable polynomials. 
  The specific expression of these polynomials\footnote{In fact, these polynomials can be computed explicitly: they are the partial exponential Bell polynomials. They appear in the Faà di Bruno's formula for the chain rule with high-order differentiation~\cite{Joh02}.} is not relevant, as it is enough to observe that, for~$|\zeta| > 1$,
all the derivatives of~$\sqrt{\zeta t^2}$ are bounded by a constant (depending on the order of differentiation).

  Let us now turn to the partial derivatives of~$\cos(\omega_\delta(\xi) t) = f(\omega_\delta^2(\xi))$.
  As above, one gets that the derivatives corresponding to a multi-index~$\mathbf{k}$ with~$|\mathbf{k}| \geq 1$ are of the form
  \[  
  \D^\mathbf{k}_\xi \big(  \cos(\omega_\delta(\xi) t) \big) =  \D^\mathbf{k}_\xi \big(  f(\omega_\delta^2(\xi)) \big) = \sum_{|\mathbf{j}| \leq |\mathbf{k}|} \frac{\d^{|\mathbf{j}|} f}{\d \zeta^{|\mathbf{j}|}}(\zeta)\Big|_{\zeta = \omega_\delta^2(\xi)} \,  B_{\mathbf{k},\mathbf{j}}\Big( \big(\D^{\mathbf{i}}_\xi ( \omega_\delta^2(\xi) ) \big)_{\mathbf{i}}\Big) \, ,
  \]
  where the sum runs over all multi-indices~$\mathbf{j}$ such that~$|\mathbf{j}| \leq |\mathbf{k}|$ and~$B_{\mathbf{k},\mathbf{j}}$ are suitable polynomials applied to the partial derivatives of~$\omega_\delta^2(\xi)$ up to a certain order. 
  The specific expression is not relevant for our\footnote{The interested reader can find and explicit formula for this generalization of Faà di Bruno's formula in~\cite{Sch19}.} purposes.
  What is relevant is that the derivatives~$\frac{\d^{|\mathbf{j}|} f}{\d \zeta^{|\mathbf{j}|}}(\zeta)$ are bounded and by Lemma~\ref{lem:smoothness_omega_delta_2} we have that the partial derivatives of~$\omega_\delta^2(\xi)$ (hence their composition with polynomials) are bounded.

  The argument for the function~$\xi \mapsto \omega_\delta(\xi) \sin(\omega_\delta(\xi) t)$ is adapted from the above with some modifications. 
  We observe that~$\omega_\delta(\xi) \sin(\omega_\delta(\xi) t) = g(\omega_\delta^2(\xi))$ where~$g(\zeta) = \sqrt{\zeta} \sin(\sqrt{\zeta t^2})$ for~$\zeta \geq 0$. 
  The function~$g$ is analytic.
  The derivatives of~$g$ are bounded, as can be seen by a formula analogous to~\eqref{eq:derivative_cos} with~$\theta \sin(\theta)$ in place of~$\cos(\theta)$.
  The rest of the argument is then analogous to the one for~$\cos(\omega_\delta(\xi) t)$.
\end{proof}

We now establish the main existence result of the paper. 

\begin{proof}[Proof of Theorem~\ref{thm:existence}]
  \underline{\emph{Definition of $u_\delta$}}.  We observe that the right-hand side of~\eqref{eq:Fourier_solution} is a tempered distribution for every~$t \in [0,+\infty)$. Indeed, by Lemma~\ref{lem:smoothness_cos_sin}, we have that~$\cos(\omega_\delta(\cdot) t) \hat{u}_0$ is the product of a smooth function with bounded derivatives of any order and a tempered distribution, hence it is a tempered distribution. 
  The same holds for~$\frac{\sin(\omega_\delta(\cdot) t)}{\omega_\delta(\cdot)}\hat{v}_0$. 
  
For every~$t \in [0,+\infty)$, we define~$u_\delta(t,\cdot) \in \Schwartz'(\R^d;\C^d)$, introduced via inverse Fourier transform as
  \[
    u_\delta(t,\cdot) := \F^{-1}\Big[\cos(\omega_\delta(\cdot) t) \hat{u}_0  + \frac{\sin(\omega_\delta(\cdot) t)}{\omega_\delta(\cdot)} \hat{v}_0\Big] \, ,
  \]
  so that~\eqref{eq:Fourier_solution} is satisfied. 
  We observe that~$u_\delta$ is real-valued by noticing that~$\hat{u}_\delta$ is Hermitian, see~\eqref{rem:omega_delta_radial}. 
  This follows from the fact that~$\hat{u}_0$ and~$\hat{v}_0$ are real-valued, thus~$\hat{u}_0$ and~$\hat{v}_0$ are Hermitian, and by the symmetry of~$\omega_\delta$ (see Remark~\ref{rem:omega_delta_radial}).

  Next, we observe that~$u_\delta(t,\cdot) \in H^s(\R^d;\R^d)$ for every~$t \in [0,+\infty)$. 
  Indeed, by the characterization of~$H^s(\R^d;\R^d)$ in terms of the Fourier transform, we have that
  \[
    (1+|\xi|^2)^\frac{s}{2} \hat u_0(\xi) \in L^2(\R^d;\C^d) \, , \quad (1+|\xi|^2)^\frac{s-\alpha}{2} \hat v_0(\xi) \in L^2(\R^d;\C^d) \, .
  \]
  Hence,
  \[
    (1+|\xi|^2)^\frac{s}{2} \cos(\omega_\delta(\xi) t) \hat u_0(\xi) \in L^2(\R^d;\C^d) \, .
  \]
   As for the second term, we apply~\eqref{eq:omega_delta_high_frequencies} to obtain the estimates
  \[ \label{eq:estimate_high_low_frequencies}
    \begin{cases}
      \displaystyle \frac{(1+|\xi|^2)^\frac{\alpha}{2}}{\omega_\delta(\xi)} \leq C \, , &  \text{for } |\xi| > 1 \, , \\
      \displaystyle (1+|\xi|^2)^\frac{\alpha}{2} \leq 2^\frac{\alpha}{2} \leq C \, , & \text{for } |\xi| \leq 1 \, .
    \end{cases}
      \]
  and deduce that 
  \[ 
    \begin{split}
      & \Big| (1+|\xi|^2)^\frac{s}{2} \frac{\sin(\omega_\delta(\xi) t)}{\omega_\delta(\xi)} \hat v_0(\xi)\Big|  = \Big| (1+|\xi|^2)^\frac{\alpha}{2} \frac{\sin(\omega_\delta(\xi) t)}{\omega_\delta(\xi)} (1+|\xi|^2)^\frac{s-\alpha}{2} \hat v_0(\xi)\Big| \\
      & \quad \leq \Big| (1+|\xi|^2)^\frac{\alpha}{2} \frac{\sin(\omega_\delta(\xi) t)}{\omega_\delta(\xi)} (1+|\xi|^2)^\frac{s-\alpha}{2} \hat v_0(\xi) \mathds{1}_{\{|\xi| \leq 1\}}(\xi)\Big| \\
      & \quad \quad + \Big| \frac{(1+|\xi|^2)^\frac{\alpha}{2}}{\omega_\delta(\xi)}  \sin(\omega_\delta(\xi) t) (1+|\xi|^2)^\frac{s-\alpha}{2} \hat v_0(\xi) \mathds{1}_{\{|\xi| > 1\}}(\xi)\Big| \\
      & \quad \leq C t (1+|\xi|^2)^\frac{s-\alpha}{2} |\hat v_0(\xi)|\mathds{1}_{\{|\xi| \leq 1\}}(\xi) + C (1+|\xi|^2)^\frac{s-\alpha}{2} |\hat v_0(\xi)|\mathds{1}_{\{|\xi| > 1\}}(\xi) \\
      & \quad \leq C (1+t) (1+|\xi|^2)^\frac{s-\alpha}{2} |\hat v_0(\xi)| \in L^2(\R^d)\, . 
    \end{split}
  \]
  This shows that~$u_\delta(t,\cdot) \in H^s(\R^d;\R^d)$.

  Let us prove that~$u_\delta \in C([0,+\infty);H^s(\R^d;\R^d))$.  
  Let~$t$, $s \in [0,+\infty)$. 
  We compute, exploiting~\eqref{eq:estimate_high_low_frequencies}, that
  \[ \label{eq:continuity_Hs}
    \begin{split}
      & \|u_\delta(t,\cdot) - u_\delta(s,\cdot)\|_{H^s} \leq C \|(1+|\xi|^2)^{\frac{s}{2}} (\hat u_\delta(t,\xi)  - \hat u_\delta(s,\xi)) \|_{L^2} \\
      & \leq C \| ( \cos(\omega_\delta(\xi)t) - \cos(\omega_\delta(\xi)s) ) (1+|\xi|^2)^{\frac{s}{2}} \hat u_0(\xi) \|_{L^2} \\
      & \quad + C \left\| \left( \frac{\sin(\omega_\delta(\xi)t)}{\omega_\delta(\xi)} - \frac{\sin(\omega_\delta(\xi)s)}{\omega_\delta(\xi)} \right) (1+|\xi|^2)^\frac{\alpha}{2} (1+|\xi|^2)^{\frac{s-\alpha}{2}} \hat v_0(\xi)  \mathds{1}_{\{|\xi| \leq 1\}}(\xi) \right\|_{L^2} \\ 
      & \quad + C \Big\| ( \sin(\omega_\delta(\xi)t) - \sin(\omega_\delta(\xi)s) ) \frac{(1+|\xi|^2)^{\frac{\alpha}{2}}}{\omega_\delta(\xi)} (1+|\xi|^2)^{\frac{s-\alpha}{2}} \hat v_0(\xi)  \mathds{1}_{\{|\xi| > 1\}}(\xi) \Big\|_{L^2} \, ,
    \end{split}
  \]
  which converges to~$0$ as~$s\to t$,
thanks to the Dominated Convergence Theorem. 

  Let us prove that~$u_\delta \in C^1((0,+\infty);H^{s-\alpha}(\R^d;\R^d))$.
  Reasoning as above and by Lemma~\ref{lem:smoothness_cos_sin}, we obtain that 
  \[
  - \omega_\delta(\cdot)\sin(\omega_\delta(\cdot) t) \hat u_0 + \cos(\omega_\delta(\cdot) t) \hat v_0 
  \]
  is a real-valued tempered distribution for every~$t \in [0,+\infty)$ and 
  \[
  v_\delta(t,\cdot) = \F^{-1}\Big[- \omega_\delta(\cdot)\sin(\omega_\delta(\cdot) t) \hat u_0 + \cos(\omega_\delta(\cdot) t) \hat v_0\Big] \in H^{s-\alpha}(\R^d;\R^d) \, .
  \]
  We obtain that
  \[
  \begin{split}
    & \Big\|\frac{ u_\delta(t,\cdot) - u_\delta(s,\cdot)}{t-s} - v_\delta(t,\cdot) \Big\|_{H^{s-\alpha}} \\
    & \quad \leq C \left\| \left( \frac{\cos(\omega_\delta(\xi)t) - \cos(\omega_\delta(\xi)s)}{t-s} + \omega_\delta(\xi) \sin(\omega_\delta(\xi)t) \right) (1+|\xi|^2)^{\frac{s-\alpha}{2}} \hat u_0(\xi) \right\|_{L^2} \\
    & \quad \leq C \left\| \left( \frac{\sin(\omega_\delta(\xi)t) - \sin(\omega_\delta(\xi)s)}{\omega_\delta(\xi)(t-s)} - \cos(\omega_\delta(\xi)t) \right) (1+|\xi|^2)^{\frac{s-\alpha}{2}} \hat v_0(\xi) \right\|_{L^2} \, ,
  \end{split}  
  \]
  which converges to~$0$ as~$s\to t$. Here above, we have
  applied the Dominated Convergence Theorem by estimating, using Lemma~\ref{lem:omega_delta_low_high_frequencies},
  \[
    \begin{split}
      & \Big| \frac{\cos(\omega_\delta(\xi)t) - \cos(\omega_\delta(\xi)s)}{t-s} + \omega_\delta(\xi) \sin(\omega_\delta(\xi)t) \Big|^2 (1+|\xi|^2)^{s - \alpha} |\hat u_0(\xi)|^2 \\
      & \quad \leq \omega_\delta^2(\xi) (1+|\xi|^2)^{s - \alpha} |\hat u_0(\xi)|^2  \leq C (1+|\xi|^2)^{s} |\hat u_0(\xi)|^2 \in L^1(\R^d) \, ,
    \end{split}
   \] 
   and
   \[
    \begin{split}
      \Big| \frac{\sin(\omega_\delta(\xi)t) - \sin(\omega_\delta(\xi)s)}{\omega_\delta(\xi)(t-s)}  - \cos(\omega_\delta(\xi)t) \Big|^2 (1+|\xi|^2)^{s - \alpha} |\hat v_0(\xi)|^2  \leq (1+|\xi|^2)^{s - \alpha} |\hat v_0(\xi)|^2 \in L^1(\R^d) \, .
    \end{split}
   \]

  \underline{\emph{Distributional solution}}.
  We regard~$u_\delta$ as a distribution in~$\Dcal'(\R \x \R^d;\R^d)$ by setting, if the differentiation order satisfies~$s \geq 0$,
  \[
  \langle u_\delta, \varphi \rangle_{\Dcal',\Dcal} = \int_{(0,+\infty)} \int_{\R^d} u_\delta(t,x) \cdot \varphi(t,x)  \, \d x \, \d t \, , \quad \text{for every } \varphi \in \Dcal(\R \x \R^d;\R^d) \, ,
  \]
  and, if~$s < 0$,
  \[
  \langle u_\delta, \varphi \rangle_{\Dcal',\Dcal} = \int_{(0,+\infty)} \langle u_\delta(t,\cdot), \varphi(t,\cdot) \rangle_{H^{-|s|}, H^{|s|}} \, \d t \, , \quad \text{for every } \varphi \in \Dcal(\R \x \R^d;\R^d) \, .
  \] 
  
  Let us prove that~$u_\delta$ is a distributional solution to the linear peridynamics model in~\eqref{eq:linear_peridynamics} with initial data~$(u_0,v_0)$ according to Definition~\ref{def:distributional_solution}. Let~$\varphi \in \Dcal(\R \x \R^d;\R^d)$. 
  We exploit the linearity of the equation and we argue by approximation by considering a family of smooth functions~$u_\delta * \eta_\e$, where~$(\eta_\e)_\e$ is a family of mollifiers~$\eta_\e \in C^\infty_c(\R^d)$ such that~$\eta_\e \geq 0$, $\int_{\R^d} \eta_\e = 1$ and $\supp(\eta_\e) \subset B_\e$. 
  We observe that the Fourier transform of~$u_\delta * \eta_\e$ is given by (in the formula, $\hat u_0\hat \eta_\e$ and~$\hat v_0\hat \eta_\e$ are intended as the multiplication of a tempered distribution with a rapidly decreasing function)
  \[
    \begin{split}
      \hat{u_\delta * \eta_\e}(t,\cdot) & = \cos(\omega_\delta(\cdot) t) \hat u_0\hat \eta_\e + \frac{\sin(\omega_\delta(\cdot) t)}{\omega_\delta(\cdot)} \hat v_0 \hat \eta_\e \\
      & =  \cos(\omega_\delta(\cdot) t) \hat{u_0*\eta_\e} + \frac{\sin(\omega_\delta(\cdot) t)}{\omega_\delta(\cdot)} \hat{v_0*\eta_\e} \, ,
    \end{split}
  \]
  where~$u_0*\eta_\e$ and~$v_0*\eta_\e$ are smooth functions.
  This implies that~$u_\delta * \eta_\e$ is a classical solution to the problem 
  \[
  \begin{cases}
    \de_{tt} (u_\delta * \eta_\e) - K_\delta[u_\delta * \eta_\e] = 0 \, , & (t,x) \in (0,+\infty) \x \R^d \, , \\
    (u_\delta * \eta_\e)(0,x) = (u_0*\eta_\e)(x) \, , \quad \de_t (u_\delta * \eta_\e)(0,x) = (v_0*\eta_\e)(x) \, , & x \in \R^d \, .
  \end{cases}
  \]
  Integrating by parts, one gets that~$u_\delta * \eta_\e$ is a distributional solution with initial data~$(u_0*\eta_\e, v_0*\eta_\e)$ in the sense of Definition~\ref{def:distributional_solution}, \ie, for every~$\varphi \in \Dcal(\R \x \R^d;\R^d)$,
  \[
  \begin{split}
    & \langle u_\delta * \eta_\e, \de_{tt} \varphi - K_\delta^*[\varphi] \rangle =  - \langle u_0*\eta_\e, \de_t \varphi(0,\cdot) \rangle + \langle v_0*\eta_\e, \varphi(0,\cdot) \rangle \, .
  \end{split}
  \]
  Letting~$\e \to 0$, we exploit the convergence of~$u_\delta * \eta_\e$ to~$u_\delta$ in $\Dcal'(\R \x \R^d;\R^d)$ and of~$u_0*\eta_\e$ and~$v_0*\eta_\e$ to~$u_0$ and~$v_0$ in $\Dcal'(\R^d;\R^d)$ to deduce that~$u_\delta$ is a distributional solution to the linear peridynamics model in~\eqref{eq:linear_peridynamics} with initial data~$(u_0,v_0)$ in the sense of Definition~\ref{def:distributional_solution}.
  This concludes the proof.
\end{proof}

\subsection{Uniqueness} In this subsection we provide a uniqueness result for the linear peridynamics model under low regularity assumptions. 
By the linearity of the equation, we can restrict our attention to the uniqueness of the solutions with zero initial data.

We stress that, differently from~\cite[Theorem~2.5]{CocDipMadVal18}, the following uniqueness result holds for solutions that do not necessarily belong to the energy space. 
In principle, one cannot exclude that the zero initial datum evolves in a nontrivial solution belonging to a space different with less regularity than the energy space. We show that this is not the case
and thus establish the uniqueness result in Theorem~\ref{thm:uniqueness}.

\begin{proof}[Proof of Theorem~\ref{thm:uniqueness}]
  Let~$\psi \in \Dcal(\R \x \R^d;\R^d)$. 
  Let~$T > 0$ be such that $\supp(\psi) \subset (-\infty,T] \x \R^d$.
  We need to show that $\langle u_\delta , \psi \rangle = 0$.
  We will do this by arguing by duality, \ie, by building (and correcting) a smooth function~$\varphi$ such that~$\de_{tt} \varphi - K_\delta^*[\varphi] = \psi$ and use the Definition~\ref{def:distributional_solution} of distributional solution. We provide the details below. 

  \underline{\itshape Solving $\de_{tt} \varphi - K_\delta^*[\varphi] = \psi$}. We consider a solution~$\varphi$ to the final-time dual non-homogeneous problem 
  \[
  \begin{cases} 
    \de_{tt} \varphi(t,x) - K_\delta^*[\varphi(t,\cdot)](x) = \psi(t,x) \, , &   (t,x) \in (0,T) \x \R^d \, , \\
    \varphi(T,x) = 0 \, , \quad \de_t \varphi(T,x) = 0 \, , &  x \in \R^d \, .
  \end{cases} 
  \]
  Such a~$\varphi$ can be exhibited explicitly following Duhamel's principle, \ie, 
  \[
  \varphi(t,x) = -\int_t^T \tilde \varphi(t,x;\tau) \, \d \tau \, , \quad t  \in \R \, , \ x \in \R^d \, ,
  \]
  where 
  \[
  \tilde \varphi(t,x;\tau) := \frac{1}{(2\pi)^{d/2}} \int_{\R^d} \frac{\sin(\omega_\delta(\xi)(t-\tau))}{\omega_\delta(\xi)} \hat \psi(\tau,\xi) e^{i \xi \cdot x} \, \d \xi \, , \quad t \in \R \, , \ x \in \R^d \, , \ \tau \in \R \, ,
  \]  
  \ie, $\tilde \varphi(t,x;\tau)$ is a solution to the homogeneous problem
  \[
   \begin{cases}
    \de_{tt} \tilde \varphi(t,x;\tau) - K_\delta^*[\tilde \varphi(t,\cdot;\tau)](x) = 0 \, , & t \in \R \, , \ x \in \R^d \, , \\ 
    \tilde \varphi(\tau,x;\tau) = 0 \, , & x \in \R^d \, , \\ 
    \de_t \tilde \varphi(\tau,x;\tau) = \psi(\tau,x) \, , & x \in \R^d \, . 
   \end{cases}
  \]

  \underline{Estimates on $\varphi$ and its derivatives}.
  We observe that~$\varphi \in C^\infty(\R \x \R^d;\R^d)$ and for every multi-index~$\mathbf{k}$
  \[ \label{eq:varphi_in_H2}
  \sup_{t \in [0,T]} \int_{\R^d} \big(|\D^\mathbf{k}_x\varphi(t,x)|^2 + |\de_{tt} \D^\mathbf{k}_x \varphi(t,x)|^2 \big) \, \d x  < +\infty \, ,
  \]
  Let us prove the first bound in~\eqref{eq:varphi_in_H2}.
  We start by noticing that the Fourier transform of~$\varphi$ is given~by 
   \[
   \hat \varphi(t,\xi) = - \int_t^T \frac{\sin(\omega_\delta(\xi)(t-\tau))}{\omega_\delta(\xi)} \hat \psi(\tau,\xi) \, \d \tau \, .
   \]
   This allows us to estimate, using Plancherel's identity,
   \[ 
    \begin{split}
      & \int_{\R^d} \big| \D^\mathbf{k}_x \varphi(t,x)\big|^2 \, \d x = \int_{\R^d} \big| \hat{\D^\mathbf{k}_x \varphi}(t,\xi)\big|^2 \, \d \xi \leq C \int_{\R^d} |\xi|^{2|\mathbf{k}|} |\hat \varphi(t,\xi)|^2 \, \d \xi \\
      & \quad = C \int_{\R^d} |\xi|^{2|\mathbf{k}|} \Big| \int_t^T \frac{\sin(\omega_\delta(\xi)(t-\tau))}{\omega_\delta(\xi)} \hat \psi(\tau,\xi) \, \d \tau \Big|^2 \, \d \xi \leq  C \int_{\R^d} |\xi|^{2|\mathbf{k}|} \int_t^T |\tau - t|^2 |\hat \psi(\tau,\xi)|^2 \, \d \tau \, \d \xi \\
      & \quad \leq C T^2 \int_0^T \int_{\R^d} |\xi|^{2|\mathbf{k}|} |\hat \psi(\tau,\xi)|^2  \, \d \xi \, \d \tau  \leq C < +\infty \quad \text{for every } t \in [0,T] \, ,
      \end{split}
   \]
   where we used that~$\hat \psi$ is a rapidly decreasing function in the variable~$\xi$. 
   
   To estimate the second term in~\eqref{eq:varphi_in_H2}, we observe that Plancherel's identity and~\eqref{eq:omega_delta_high_frequencies} yield the bounds
   \[ \label{eq:K_less_Lap}
   \begin{split}
   \int_{\R^d} \big|K_\delta^*[f](x) \big|^2 \, \d x & = \int_{\R^d} \omega_\delta^4(\xi) | \hat f(\xi)|^2 \, \d \xi \leq C \int_{\R^d} |\xi|^{4\alpha} | \hat f(\xi)|^2 \, \d \xi \leq C \int_{\R^d} |\xi|^{4} | \hat f(\xi)|^2 \, \d \xi \\
   &\qquad \leq C \int_{\R^d} \big| \D^2_x f(x) \big|^2 \, \d x \, .
   \end{split}
  \]
   In particular,
   \[
   \begin{split}
    & \int_{\R^d} \big|K_\delta^*[\D^\mathbf{k}_x\varphi(t,\cdot)](x)\big|^2 \, \d x \leq C \int_{\R^d} \big|\D^{|\mathbf{k}|+2}_x \varphi(t,x)\big|^2 \, \d x \, , \quad \text{for every } t \in [0,T] \, .
   \end{split}
   \]
   This, together with the fact that~$\D^\mathbf{k}_x \varphi$ is a solution to the problem
   \[
     \begin{cases} 
       \de_{tt} \D^\mathbf{k}_x \varphi(t,x) - K_\delta^*[\D^\mathbf{k}_x \varphi(t,\cdot)](x) = \D^\mathbf{k}_x \psi(t,x) \, , &   (t,x) \in (0,T) \x \R^d \, , \\
       \D^\mathbf{k}_x \varphi(T,x) = 0 \, , \quad \de_t \D^\mathbf{k}_x \varphi(T,x) = 0 \, , &  x \in \R^d \, ,
     \end{cases} 
     \]
    yields the second estimate in~\eqref{eq:varphi_in_H2}.

  \underline{\itshape Correcting $\varphi$}. We correct~$\varphi$ by approximating it with a sequence of test functions~$\varphi_n \in \Dcal(\R \x \R^d;\R^d)$ as follows. We consider a cut-off function~$\zeta \in C^\infty_c([0,+\infty))$ such that~$0 \leq \zeta \leq 1$, $\zeta(\rho) = 1$ for~$\rho \in [0,1]$, and~$\zeta(\rho) = 0$ for~$\rho \geq 2$ and we let 
  \[
  \zeta_n(\rho) = \begin{cases}
    1 \, , & \text{for } \rho \in [0,n] \,, \\ 
    \zeta(\rho-n) \, , & \text{for } \rho \in [n,n+2] \, , \\
    0 \, , & \text{for } \rho \in [n+2,+\infty) \, .
  \end{cases}
  \]
  Finally, we set~$\varphi_n(t,x) := \zeta_n(|t|) \zeta_n(|x|) \varphi(t,x)$.  
  Below, we will always assume that~$n > T$, so that for every~$t \in [0,T]$ and~$x \in \R^d$ we simply have that 
  \[ \label{eq:varphi_n}
    \varphi_n(t,x) = \zeta_n(|x|) \varphi(t,x) \, .
  \]
  Let us compute the derivatives of this modification. 
  For a multi-index~$\mathbf{k}$, we have that 
  \[ \label{eq:Leibniz_rule}
  \begin{split}
  \D^\mathbf{k}_x \varphi_n(t,x) & = \sum_{\mathbf{i} + \mathbf{j} = \mathbf{k}} \frac{\mathbf{k}!}{\mathbf{i}! \mathbf{j}!} \D^\mathbf{i}_x \big(\zeta_n(|\cdot|)\big)(x)  \D^\mathbf{j}_x \varphi(t,x) \\
    & = \sum_{\substack{\mathbf{i} + \mathbf{j} = \mathbf{k} \\ |\mathbf{i}| \neq 0}} \frac{\mathbf{k}!}{\mathbf{i}! \mathbf{j}!} \D^\mathbf{i}_x \big(\zeta_n(|\cdot|)\big)(x)  \D^\mathbf{j}_x \varphi(t,x) + \zeta_n(|x|) \D^\mathbf{k}_x \varphi(t,x)\, .
  \end{split}
  \]
  It follows that 
  \[ \label{eq:varphi_n_leq_varphi}
  \big|\D^\mathbf{k}_x \varphi_n(t,x)\big| \leq C \sum_{|\mathbf{j}| \leq |\mathbf{k}|} \big|\D^\mathbf{j}_x \varphi(t,x)\big|   \quad \text{and} \quad
  \big|\de_{tt}\D^\mathbf{k}_x \varphi_n(t,x)\big| \leq C \sum_{|\mathbf{j}| \leq |\mathbf{k}|} \big|\de_{tt}\D^\mathbf{j}_x \varphi(t,x)\big| \, , 
  \]
  where the constant~$C$ depends on~$\zeta$.

  Equality~\eqref{eq:varphi_n} also implies that 
  \[ \label{eq:varphi_n_equals_varphi}
    |x| < n \implies \varphi_n(t,x) = \varphi(t,x) \, ,
  \]
  and, accordingly, the equality holds for derivatives. 

  \underline{\itshape Estimating the error}. We observe that, for every multi-index~$\mathbf{k}$, 
  \[ \label{eq:error_to_zero}
 \lim_{n\to+\infty} \sup_{t \in [0,T]} \Big( \big\| \de_{tt}( \D^\mathbf{k}_x\varphi(t,\cdot) - \D^\mathbf{k}_x\varphi_n(t,\cdot))\big\|_{L^2}  +  \big\| K_\delta^*[ \D^\mathbf{k}_x\varphi(t,\cdot) - \D^\mathbf{k}_x\varphi_n(t,\cdot)]\big\|_{L^2} \Big) = 0 \,.
  \]
  
  To show the first convergence in~\eqref{eq:error_to_zero}, we use~\eqref{eq:varphi_n_equals_varphi}, \eqref{eq:varphi_n_leq_varphi}, and~\eqref{eq:varphi_in_H2} to deduce that 
  \[
  \begin{split}
  &  \lim_{n\to+\infty}\int_{\R^d}\big| \de_{tt}( \D^\mathbf{k}_x\varphi(t,x) - \D^\mathbf{k}_x\varphi_n(t,x)) \big|^2 \, \d x 
  \\&\qquad
  = \lim_{n\to+\infty} \int_{\R^d \sm \{|x| < n\}}\big| \de_{tt}( \D^\mathbf{k}_x\varphi(t,x) - \D^\mathbf{k}_x\varphi_n(t,x)) \big|^2 \, \d x \\
  & \qquad \leq C \lim_{n\to+\infty} \sum_{|\mathbf{j}| \leq |\mathbf{k}|}\int_{\R^d \sm \{|x| < n\}}\big| \de_{tt}( \D^\mathbf{j}_x\varphi(t,x))\big|^2 \, \d x = 0 \, .
  \end{split}
  \]
  To prove the second convergence in~\eqref{eq:error_to_zero}, we employ~\eqref{eq:K_less_Lap}, \eqref{eq:varphi_n_equals_varphi}, and~\eqref{eq:varphi_in_H2} to conclude that 
  \[
  \begin{split}
    &  \lim_{n\to+\infty}\int_{\R^d} \big|K_\delta^*[\varphi(t,\cdot) - \varphi_n(t,\cdot)](x) \big|^2 \, \d x \leq C \lim_{n\to+\infty} \int_{\R^d} \big| \D^{|\mathbf{k}|+2}_x \varphi(t,x)  - \D^{|\mathbf{k}|+2}_x \varphi_n(t,x) \big|^2 \, \d x  \\
    & \qquad\qquad \leq C \lim_{n\to+\infty} \int_{\R^d \sm \{|x| < n\}} \big| \D^{|\mathbf{k}|+2}_x \varphi(t,x)  - \D^{|\mathbf{k}|+2}_x \varphi_n(t,x) \big|^2 \, \d x  \\
    & \qquad\qquad \leq C \lim_{n\to+\infty} \sum_{|\mathbf{j}| \leq |\mathbf{k}|+2} \int_{\R^d \sm \{|x| < n\}} \big| \D^{\mathbf{j}}_x \varphi(t,x) \big|^2 \, \d x= 0  \, .
  \end{split}
  \]
 
  \underline{\itshape Showing that $\langle u_\delta , \psi \rangle = 0$}.
  We write the duality as follows:
  \[ \label{eq:split_with_approximation}
  \begin{split}
  \langle u_\delta , \psi \rangle & = \langle u_\delta , \de_{tt} \varphi - K_\delta^*[\varphi] \rangle = \langle u_\delta , \de_{tt} \varphi_n - K_\delta^*[\varphi_n] \rangle + \langle u_\delta , \de_{tt} (\varphi - \varphi_n) - K_\delta^*[\varphi - \varphi_n] \rangle \, .
  \end{split}
  \]
  By Definition~\ref{def:distributional_solution}, using the initial condition~$(u_0,v_0)=(0,0)$, we get that
  \[
    \langle u_\delta , \de_{tt} \varphi_n - K_\delta^*[\varphi_n] \rangle = 0 \,, \quad \text{for every } n \in \N \, ,
  \]
  hence we only need to estimate the second term in~\eqref{eq:split_with_approximation}. 
  If~$s \geq 0$, we have the representation
  \[  \label{eq:case_s_positive}
  \begin{split}
  & \big| \langle u_\delta , \de_{tt} (\varphi - \varphi_n) - K_\delta^*[\varphi - \varphi_n] \rangle \big| \\
 = \;&\left| \int_0^T \int_{\R^d} u_\delta(t,\cdot) \cdot \big( \de_{tt} (\varphi(t,\cdot) - \varphi_n(t,\cdot)) - K_\delta^*[\varphi(t,\cdot) - \varphi_n(t,\cdot)] \big)  \, \d t \right|  \\
 \leq\;& \int_0^T \big\|u_\delta(t,\cdot)\big\|_{L^2(\R^d;\R^d)} \Big( \big\|\de_{tt}  (\varphi(t,\cdot) - \varphi_n(t,\cdot))\big\|_{L^2(\R^d;\R^d)} + \big\| K_\delta^*[\varphi(t,\cdot) - \varphi_n(t,\cdot)] \big\|_{L^2(\R^d;\R^d)} \Big) \d t \, .
  \end{split}
  \] 
  Instead, if~$s < 0$, then 
  \[ \label{eq:case_s_negative}
  \begin{split}
  & \big| \langle u_\delta, \de_{tt} (\varphi - \varphi_n) - K_\delta^*[\varphi - \varphi_n] \rangle \big| = \left| \int_0^T \int_{\R^d} \langle u_\delta, \de_{tt} (\varphi - \varphi_n) - K_\delta^*[\varphi - \varphi_n] \rangle_{H^{-|s|}, H^{|s|}} \, \d t \right|  \\
  &  \quad \leq \int_0^T \big\|u_\delta(t,\cdot)\big\|_{H^{-|s|}(\R^d;\R^d)} \Big( \big\|\de_{tt}  (\varphi(t,\cdot) - \varphi_n(t,\cdot))\big\|_{H^{|s|}(\R^d;\R^d)} \\
  & \hphantom{\quad \leq \int_0^T \big\|u_\delta(t,\cdot)\big\|_{H^{-|s|}(\R^d;\R^d)} \Big( }  \quad \quad + \big\| K_\delta^*[\varphi(t,\cdot) - \varphi_n(t,\cdot)] \big\|_{H^{|s|}(\R^d;\R^d)} \Big) \d t \\
  & \quad \leq \int_0^T \big\|u_\delta(t,\cdot)\big\|_{H^{-|s|}(\R^d;\R^d)} \Big( \sum_{|\mathbf{k}| \leq m}   \big\|\de_{tt}  (\D_x^\mathbf{k}\varphi(t,\cdot) - \D_x^\mathbf{k}\varphi_n(t,\cdot))\big\|_{L^2(\R^d;\R^d)}  \\
  & \hphantom{\quad \leq \int_0^T \big\|u_\delta(t,\cdot)\big\|_{H^{-|s|}(\R^d;\R^d)}  } \quad + \sum_{|\mathbf{k}| \leq m}  \big\| K_\delta^*[\D_x^\mathbf{k}\varphi(t,\cdot) - \D_x^\mathbf{k}\varphi_n(t,\cdot)] \big\|_{L^2(\R^d;\R^d)}  \Big) \, \d t
  \end{split}
  \]
  In both cases, from~\eqref{eq:error_to_zero} we deduce the convergence to zero of the integrands in the right-hand sides of~\eqref{eq:case_s_positive} and~\eqref{eq:case_s_negative}. 
  We conclude by observing that~\eqref{eq:varphi_n_leq_varphi}, \eqref{eq:varphi_in_H2}, and the assumption $u \in L^1_\mathrm{loc}((0,+\infty);H^s(\R^d;\R^d))$ allow us to apply the Dominated Convergence Theorem and conclude that 
  \[
    \lim_{n\to+\infty} \big| \langle u_\delta, \de_{tt} (\varphi - \varphi_n) - K_\delta^*[\varphi - \varphi_n] \rangle \big| = 0 \, .
  \]
  This concludes the proof.
\end{proof}

\subsection{Energy conservation} The total energy~$E^{K_\delta}$ is defined in~\eqref{eq:total_energy}.
We recall the following result (see, \eg, \cite{CocDipFanMadVal22}).

\begin{theorem}
  Let~$u_0 \in H^\alpha(\R^d;\R^d)$ and~$v_0 \in L^2(\R^d;\R^d)$.
  Let $u_\delta \in C([0,+\infty);H^\alpha(\R^d;\R^d)) \cap C^1((0,+\infty);L^2(\R^d;\R^d))$ be the unique solution to the linear peridynamics model in~\eqref{eq:linear_peridynamics} with initial data~$(u_0,v_0)$ provided by Theorems~\ref{thm:existence} and~\ref{thm:uniqueness}.

Then, the total energy~$E^{K_\delta}[u_\delta](t)$ is conserved in time, \ie, $E^{K_\delta}[u_\delta](t) = E^{K_\delta}[u_\delta](0)$ for every~$t \in [0,+\infty)$.
\end{theorem}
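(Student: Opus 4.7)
The plan is to compute the total energy $E^{K_\delta}[u_\delta](t)$ explicitly in the Fourier domain using the representation formula~\eqref{eq:Fourier_solution} from Theorem~\ref{thm:existence} and the Fourier characterization of the energy seminorm given by Lemma~\ref{lem:W_in_Fourier_space}. Once rewritten on the Fourier side, the kinetic term and the potential term combine so that every trigonometric identity $\sin^2 + \cos^2 = 1$ is exploited and the mixed $\Re(\hat u_0 \cdot \overline{\hat v_0})$ cross terms cancel, leaving a time-independent expression.

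\smallskip

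First I would differentiate~\eqref{eq:Fourier_solution} with respect to time (this is legitimate because~$u_\delta \in C^1((0,+\infty);L^2)$ and the derivative was identified explicitly in the proof of Theorem~\ref{thm:existence}) to obtain
\[
\widehat{\de_t u_\delta}(t,\xi) = -\omega_\delta(\xi)\sin(\omega_\delta(\xi) t)\,\hat u_0(\xi) + \cos(\omega_\delta(\xi) t)\,\hat v_0(\xi).
\]
By Plancherel's identity applied componentwise (noting that $\de_t u_\delta(t,\cdot) \in L^2(\R^d;\R^d)$),
\[
\|\de_t u_\delta(t,\cdot)\|_{L^2}^2 = \int_{\R^d}\Big(\omega_\delta^2\sin^2(\omega_\delta t)\,|\hat u_0|^2 + \cos^2(\omega_\delta t)\,|\hat v_0|^2 - 2\omega_\delta\sin(\omega_\delta t)\cos(\omega_\delta t)\,\Re(\hat u_0\cdot\overline{\hat v_0})\Big)\,\d\xi.
\]

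\smallskip

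Next I would handle the nonlocal potential term. Since $u_0 \in H^\alpha(\R^d;\R^d) \subset \W_\delta^\alpha$ by Proposition~\ref{prop:equivalence_H_W} and the regularity of $u_\delta$ is propagated in time (as shown in the proof of Theorem~\ref{thm:existence}, since $\omega_\delta$ is bounded by $C(1+|\xi|^\alpha)$ from Lemma~\ref{lem:omega_delta_low_high_frequencies}), we have $u_\delta(t,\cdot)\in\W_\delta^\alpha$ for every $t\geq 0$. Lemma~\ref{lem:W_in_Fourier_space} therefore gives
\[
[u_\delta(t,\cdot)]^2_{\W_\delta^\alpha} = \int_{\R^d}\omega_\delta^2(\xi)\Big|\cos(\omega_\delta(\xi) t)\,\hat u_0(\xi) + \tfrac{\sin(\omega_\delta(\xi) t)}{\omega_\delta(\xi)}\,\hat v_0(\xi)\Big|^2\,\d\xi,
\]
which, after expansion, reads
\[
\int_{\R^d}\Big(\omega_\delta^2\cos^2(\omega_\delta t)\,|\hat u_0|^2 + \sin^2(\omega_\delta t)\,|\hat v_0|^2 + 2\omega_\delta\sin(\omega_\delta t)\cos(\omega_\delta t)\,\Re(\hat u_0\cdot\overline{\hat v_0})\Big)\,\d\xi.
\]

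\smallskip

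Summing the two contributions, the cross terms cancel exactly and the Pythagorean identity collapses the trigonometric coefficients, yielding
\[
2E^{K_\delta}[u_\delta](t) = \int_{\R^d}\bigl(\omega_\delta^2(\xi)|\hat u_0(\xi)|^2 + |\hat v_0(\xi)|^2\bigr)\,\d\xi,
\]
which is manifestly independent of $t$. Evaluating at $t=0$ (where $\hat u_\delta(0,\cdot)=\hat u_0$ and $\widehat{\de_t u_\delta}(0,\cdot)=\hat v_0$) identifies this quantity with $2E^{K_\delta}[u_\delta](0)$, concluding the proof. I do not expect any serious obstacle: the only subtle points are the justification that the Fourier-side formulas survive at the $L^2$/$\W_\delta^\alpha$ level of regularity (handled by density and by the bound $\omega_\delta(\xi)\leq C(1+|\xi|^\alpha)$), and making sure that the time derivative commutes with the Fourier transform, which is already implicit in the proof of Theorem~\ref{thm:existence}.
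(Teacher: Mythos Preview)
Your proposal is correct and follows essentially the same approach as the paper's proof: both compute the energy on the Fourier side via Plancherel's identity and Lemma~\ref{lem:W_in_Fourier_space}, plug in the explicit formula~\eqref{eq:Fourier_solution} for $\hat u_\delta$ and its time derivative, and then use $\sin^2+\cos^2=1$ to obtain a time-independent expression. The only difference is cosmetic: you expand the squares and display the cancellation of the cross terms explicitly, whereas the paper compresses this into a single chain of equalities.
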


\begin{proof}
  We provide the proof for completeness. 
  We use Plancherel's identity, Lemma~\ref{lem:W_in_Fourier_space} and formula~\eqref{eq:Fourier_solution} to write the energy as
  \[
  \begin{split}
  E^{K_\delta}[u_\delta](t) & = \frac{1}{2} \int_{\R^d} |\de_t u_\delta(t,x)|^2 \, \d x + \frac{1}{2} [u_\delta(t,\cdot)]_{\W_\delta^\alpha}^2 \\
  & = \frac{1}{2} \int_{\R^d} |\de_t \hat u_\delta(t,\xi)|^2 \, \d \xi + \frac{1}{2}\int_{\R^d} \omega_\delta^2(\xi) |\hat u_\delta(t,\xi)|^2 \d \xi \\
  & = \frac{1}{2} \int_{\R^d} |-\omega_\delta(\xi) \sin(\omega_\delta(\xi)t) \hat u_0(\xi) + \cos(\omega_\delta(\xi)t) \hat v_0(\xi)|^2 \, \d \xi \\
  &\qquad\quad + \frac{1}{2}\int_{\R^d} \omega_\delta^2(\xi) \Big|\cos(\omega_\delta(\xi)t) \hat u_0(\xi) + \frac{\sin(\omega_\delta(\xi)t)}{\omega_\delta(\xi)} \hat v_0(\xi)\Big|^2 \d \xi \\
  & = \frac{1}{2} \int_{\R^d} |\hat v_0(\xi)|^2 \, \d \xi + \frac{1}{2} \int_{\R^d} \omega_\delta^2(\xi)|\hat u_0(\xi)|^2 \, \d \xi \\
  & = \frac{1}{2} \int_{\R^d} |v_0(x)|^2 \, \d x + \frac{1}{2} [u_0]_{\W_\delta^\alpha}^2\\& = E^{K_\delta}[u_\delta](0) \, .
  \end{split}
  \]
  This concludes the proof.
\end{proof}
 
\section{Comparison with solutions to the wave equation: Small $\delta$} 
\label{deltasmall76859403}

Before delving into the main results of the paper, we recall some basic facts about the classical wave equation in~\eqref{eq:wave}.

We recall the notion of distributional solution. 
\begin{definition} \label{def:distributional_solution_wave}
  Let~$u_0 \in \Dcal'(\R^d;\R^d)$ and~$v_0 \in \Dcal'(\R^d;\R^d)$. A distributional solution to the wave equation in~\eqref{eq:wave} with initial data~$(u_0,v_0)$ is a distribution $u \in \Dcal'(\R \x \R^d;\R^d)$ such that $\supp(u) \subset [0,+\infty) \x \R^d$ and
  \[  
  \langle u, \de_{tt} \varphi - \gamma^2\Delta \varphi \rangle =  - \langle u_0, \de_t \varphi(0,\cdot) \rangle + \langle v_0, \varphi(0,\cdot) \rangle \, ,    
  \]
  for every~$\varphi \in \Dcal(\R \x \R^d;\R^d)$.
  
  In the above formula, the duality pairing in the left-hand side is between $\Dcal'(\R \x \R^d;\R^d)$ and $\Dcal(\R \x \R^d;\R^d)$, while the duality pairings in the right-hand side are between $\Dcal'(\R^d;\R^d)$ and $\Dcal(\R^d;\R^d)$.
\end{definition}

The next result is well-known. 
\begin{theorem} \label{thm:existence_wave}
  Let~$s \in \R$. 
  Let~$u_0 \in H^s(\R^d;\R^d)$ and~$v_0 \in H^{s-1}(\R^d;\R^d)$. 
  
  Then, there exists $u \in C([0,+\infty);H^s(\R^d;\R^d)) \cap C^1((0,+\infty);H^{s-1}(\R^d;\R^d))$ such that, for every~$t \in [0,+\infty)$, 
  \[ \label{eq:Fourier_wave}
    \hat{u} (t,\cdot) = \F[u(t,\cdot)]  = \cos(\gamma |\cdot| t) \hat{u}_0  + \frac{\sin(\gamma |\cdot| t)}{\gamma|\cdot|} \hat{v}_0 \, .
  \]
  Moreover, $u$ is the unique distributional solution in $\Dcal'(\R \x \R^d;\R^d)$ to the wave equation in~\eqref{eq:wave} with initial data~$(u_0,v_0)$.
\end{theorem}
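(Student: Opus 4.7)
The plan is to mirror the proofs of Theorems~\ref{thm:existence} and~\ref{thm:uniqueness}, replacing the peridynamic dispersion relation $\omega_\delta(\xi)$ by the symbol $\gamma|\xi|$ of the wave equation. Since $\gamma|\xi|$ grows only linearly in $|\xi|$, the Fourier-side estimates of the existence part become easier than in the peridynamic case, while the only new difficulty is that $\gamma|\xi|$ fails to be smooth at the origin.

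First I would define $u(t,\cdot)$ as the inverse Fourier transform of the right-hand side of~\eqref{eq:Fourier_wave}. The pointwise bounds $|\cos(\gamma|\xi|t)|\leq 1$ and $|\sin(\gamma|\xi|t)/(\gamma|\xi|)|\leq \min\{t,\,1/(\gamma|\xi|)\}$, together with the characterisation~\eqref{eq:Hs_in_Fourier_space}, show that $u(t,\cdot)\in H^s(\R^d;\R^d)$ with $H^s$ norm controlled by a multiple of $(1+t)(\|u_0\|_{H^s}+\|v_0\|_{H^{s-1}})$. Continuity of $t\mapsto u(t,\cdot)$ into $H^s$ and differentiability into $H^{s-1}$ with derivative $\widehat{\partial_t u}(t,\xi)=-\gamma|\xi|\sin(\gamma|\xi|t)\,\hat u_0(\xi)+\cos(\gamma|\xi|t)\,\hat v_0(\xi)$ then follow by the Dominated Convergence Theorem applied in the Fourier variable, exactly as in the proof of Theorem~\ref{thm:existence}. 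The distributional identity of Definition~\ref{def:distributional_solution_wave} is obtained by regularising in $x$: the smooth functions $u*\eta_\e$ are classical solutions to~\eqref{eq:wave} with initial data $(u_0*\eta_\e,v_0*\eta_\e)$, so integration by parts against any test function yields the distributional identity for $u*\eta_\e$, and letting $\e\to 0$ transfers it to $u$.

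Uniqueness can be established by adapting the duality argument of Theorem~\ref{thm:uniqueness}. Given $\psi\in\Dcal(\R\times\R^d;\R^d)$ with $\supp\psi\subset(-\infty,T]\times\R^d$, I would construct, via Duhamel's principle in Fourier variables, a smooth $\varphi$ solving the backward inhomogeneous problem $\partial_{tt}\varphi-\gamma^2\Delta\varphi=\psi$ with zero Cauchy data at $t=T$, truncate $\varphi$ in $x$ by the cutoffs $\zeta_n$ already used in the excerpt to obtain compactly supported test functions $\varphi_n\in\Dcal(\R\times\R^d;\R^d)$, and control $\partial_{tt}(\varphi-\varphi_n)$ and $\gamma^2\Delta(\varphi-\varphi_n)$ in $L^2$ uniformly on $[0,T]$ using the rapid decay of $\hat\psi$. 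Applying the distributional identity and passing to the limit as $n\to\infty$ yields $\langle u,\psi\rangle=0$, whence $u$ is uniquely determined. For this step the wave equation is actually simpler than the peridynamic model: the bound~\eqref{eq:K_less_Lap} is replaced by the trivial identity $\|\gamma^2\Delta f\|_{L^2}=\gamma^2\|\D^2 f\|_{L^2}$.

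The main technical point to watch is the lack of smoothness of $\gamma|\xi|$ at $\xi=0$, which prevents a direct appeal to an analogue of Lemma~\ref{lem:smoothness_cos_sin}. However, all the Dominated Convergence arguments required in both existence and uniqueness go through using only the elementary trigonometric bounds $|\sin(\gamma|\xi|t)|\leq \gamma|\xi|t$, $|\cos(\gamma|\xi|t)|\leq 1$, and $|1-\cos(\gamma|\xi|(t-s))|\leq (\gamma|\xi|(t-s))^2/2$, which take over the role played in the peridynamic proof by the smoothness of the symbol.
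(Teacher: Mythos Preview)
The paper does not supply a proof of Theorem~\ref{thm:existence_wave}; it simply records the statement as ``well-known'' and moves on. Your proposal is therefore not competing with any argument in the text, and the strategy you outline---transplanting the proofs of Theorems~\ref{thm:existence} and~\ref{thm:uniqueness} with $\omega_\delta(\xi)$ replaced by $\gamma|\xi|$---is exactly the natural way to fill the gap and is correct as written.

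One small comment on your closing paragraph: the concern about the non-smoothness of $\gamma|\xi|$ at the origin is slightly misplaced. The multipliers that actually appear, namely $\cos(\gamma|\xi|t)$, $\sin(\gamma|\xi|t)/(\gamma|\xi|)$, and $\gamma|\xi|\sin(\gamma|\xi|t)$, are all \emph{even} analytic functions of $|\xi|$, hence analytic functions of $|\xi|^2$, and therefore genuinely $C^\infty$ in $\xi$ on all of $\R^d$. So a direct analogue of Lemma~\ref{lem:smoothness_cos_sin} does hold (with derivatives growing at most polynomially rather than staying bounded, which is still enough to multiply tempered distributions). That said, your alternative route via the elementary trigonometric bounds is equally valid and arguably cleaner, since once $\hat u_0$ and $\hat v_0$ are known to lie in weighted $L^2$ spaces, multiplication by bounded measurable functions already produces tempered distributions without any smoothness of the multiplier.
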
 

\begin{remark} \label{rem:more_regular_v0}
  A simple remark is needed for the statement of Theorem~\ref{thm:delta_to_0}.
  Let~$u_0 \in H^s(\R^d;\R^d)$ and~$v_0 \in H^{s-\alpha}(\R^d;\R^d)$. 
  Then we can use the fact that~$v_0 \in H^{s-1}(\R^d;\R^d)$ to find a solution~$u \in C([0,+\infty);H^s(\R^d;\R^d)) \cap C^1((0,+\infty);H^{s-1}(\R^d;\R^d))$ to the wave equation in~\eqref{eq:wave} with initial data~$(u_0,v_0)$.
\end{remark}

We are now in a position to show that solutions to the linear peridynamics model approximate solutions to the wave equation as~$\delta \to 0$. 

\begin{proof}[Proof of Theorem~\ref{thm:delta_to_0}]
  We prove the two convergences separately.

  \underline{\itshape Estimating $u_\delta - u$}.
  Let us start by estimating $\| u_\delta(t,\cdot) - u(t,\cdot) \|_{H^s(\R^d;\R^d)}$. 
  By~\eqref{eq:Fourier_solution} and~\eqref{eq:Fourier_wave}, for every~$t \in [0,T]$ we have that 
  \[ \label{eq:Fourier_difference}
  \begin{split}
  & \|u_\delta(t,\cdot) - u(t,\cdot) \|_{H^s(\R^d;\R^d)} \leq C \left( \int_{\R^d} (1+|\xi|^2)^s |\hat u_\delta(t,\xi) - \hat u(t,\xi)|^2 \d \xi \right)^\frac{1}{2} \\
  & \qquad\quad \leq  C \left( \int_{\R^d} (1+|\xi|^2)^s |\hat u_0(\xi)|^2 |\cos(\omega_\delta(\xi)t) - \cos(\gamma |\xi| t)|^2 \d \xi \right)^\frac{1}{2} \\
  & \qquad\qquad\quad \quad + C \left( \int_{\R^d} (1+|\xi|^2)^s |\hat v_0(\xi)|^2 \left|\frac{\sin(\omega_\delta(\xi)t)}{\omega_\delta(\xi)} - \frac{\sin(\gamma |\xi| t)}{\gamma |\xi|}\right|^2 \d \xi \right)^\frac{1}{2}  \,.
  \end{split}
  \]
  
  Let us consider the first integral in the right-hand side of~\eqref{eq:Fourier_difference}. 
  By Lemma~\ref{lem:difference_omega_delta_gamma_xi}, we have that,
for every~$\xi \in \R^d$,
\[
  \lim_{\delta\to0}  \sup_{t \in [0,T]} \big|\cos(\omega_\delta(\xi)t) - \cos(\gamma |\xi| t)\big| \leq \lim_{\delta\to0}\sup_{t \in [0,T]} \big|\omega_\delta(\xi)t - \gamma |\xi| t\big| \leq \lim_{\delta\to0}C \delta |\xi|^2 T = 0  \, .
  \]
  Since $(1+|\xi|^2)^\frac{s}{2} \hat u_0(\xi) \in L^2(\R^d;\C^d)$, by the Dominated Convergence Theorem we deduce that 
  \[ \label{eq:limit of cos part}
 \lim_{\delta\to0} \sup_{t \in [0,T]} \left( \int_{\R^d} (1+|\xi|^2)^s |\hat u_0(\xi)|^2 |\cos(\omega_\delta(\xi)t) - \cos(\gamma |\xi| t)|^2 \d \xi \right)^\frac{1}{2}= 0 \, .
  \]
  
  Let us consider the second integral in the right-hand side of~\eqref{eq:Fourier_difference}. 
  We split it as follows
  \begin{equation} \label{eq:split integral}
    \begin{split}
      & \int_{\R^d} (1+|\xi|^2)^s | \hat v_0(\xi)|^2 \Big|\frac{\sin(\omega_\delta(\xi) t)}{\omega_\delta(\xi)} - \frac{\sin(\gamma |\xi| t)}{\gamma |\xi|} \Big|^2  \,  \d \xi \\
      &  =  \int_{ \{|\xi| \leq 1\} } (1+|\xi|^2)^s | \hat v_0(\xi)|^2 \Big|\frac{\sin(\omega_\delta(\xi) t)}{\omega_\delta(\xi)} - \frac{\sin(\gamma |\xi| t)}{\gamma |\xi|} \Big|^2  \,  \d \xi  \\
      & \qquad\quad + \int_{ \{|\xi| > 1\} } (1+|\xi|^2)^s | \hat v_0(\xi)|^2 \Big|\frac{\sin(\omega_\delta(\xi) t)}{\omega_\delta(\xi)} - \frac{\sin(\gamma |\xi| t)}{\gamma |\xi|} \Big|^2  \,  \d \xi \, .
    \end{split}
  \end{equation}
  By the Lipschitz continuity of~$\frac{\sin(z)}{z}$ and by Lemma~\ref{lem:difference_omega_delta_gamma_xi}, we determine the limit of the first part as follows 
  \begin{equation} \label{eq:int xi leq 1}
    \begin{split}
      &\lim_{\delta\to0} \sup_{t \in [0,T]}\int_{ \{|\xi| \leq 1\} } (1+|\xi|^2)^s | \hat v_0(\xi)|^2 \Big|\frac{\sin(\omega_\delta(\xi) t)}{\omega_\delta(\xi)} - \frac{\sin(\gamma |\xi| t)}{\gamma |\xi|} \Big|^2  \,  \d \xi \\
      & \quad  \leq \lim_{\delta\to0} \sup_{t \in [0,T]} \int_{ \{|\xi| \leq 1\} } 2^s | \hat v_0(\xi)|^2 t^2  \big| \omega_\delta(\xi)t  - \gamma |\xi| t \big|^2  \,  \d \xi
      \\& \quad \leq \lim_{\delta\to0}  \int_{ \{|\xi| \leq 1\} } 2^s | \hat v_0(\xi)|^2 T^4 C \delta^2 |\xi|^4  \,  \d \xi \\
      & \quad \leq \lim_{\delta\to0} C T^4  \delta^2 \int_{ \{|\xi| \leq 1\} } | \hat v_0(\xi)|^2   \,  \d \xi= 0 \, .
    \end{split}
  \end{equation}
  For the second part in~\eqref{eq:split integral}, we observe that Lemma~\ref{lem:difference_omega_delta_gamma_xi} and the Lipschitz continuity of~$\frac{\sin(z)}{z}$ yield that,
 for every~$\xi \in \R^d$,
  \[
   \lim_{\delta\to0} \sup_{t \in [0,T]}\Big|\frac{\sin(\omega_\delta(\xi) t)}{\omega_\delta(\xi)} - \frac{\sin(\gamma |\xi| t)}{\gamma |\xi|} \Big| \leq
   \lim_{\delta\to0} \sup_{t \in [0,T]} \big|\omega_\delta(\xi)t^2 - \gamma |\xi| t^2\big| \leq C T^2 \delta |\xi|^2 = 0 \, .
    \]

    We use Lemma~\ref{lem:omega_delta_low_high_frequencies} to exploit the bounds $\sup_{|\xi| > 1} \frac{(1+|\xi|^2)^\alpha}{\omega_\delta^2(\xi)} < +\infty$ and $\sup_{|\xi| > 1} \frac{\omega_\delta^2(\xi)}{|\xi|} < +\infty$. 
    These allow us to estimate for~$|\xi| > 1$
  \[
    \begin{split}
      &  (1+|\xi|^2)^s | \hat v_0(\xi)|^2  \sup_{t \in [0,T]} \Big|\frac{\sin(\omega_\delta(\xi) t)}{\omega_\delta(\xi)} - \frac{\sin(\gamma |\xi| t)}{\gamma |\xi|} \Big|^2   \\
      & \quad =   \frac{(1+|\xi|^2)^{\alpha}}{\omega_\delta^2(\xi)} (1+|\xi|^2)^{s - \alpha}  | \hat v_0(\xi)|^2 \sup_{t \in [0,T]} \Big| \sin(\omega_\delta(\xi) t)  - \sin(\gamma |\xi| t) \frac{\omega_\delta(\xi)}{\gamma |\xi|} \Big|^2   \\
      & \quad \leq \left(\sup_{|\xi|> 1}\frac{(1+|\xi|^2)^{\alpha}}{\omega_\delta^2(\xi)} \right) (1+|\xi|^2)^{s - \alpha}  
      | \hat v_0(\xi)|^2 \left( 2 + 2\sup_{|\xi| > 1} \frac{\omega_\delta^2(\xi)}{ \gamma^2 |\xi|^2}  \right)     \, .
    \end{split}
  \]
  Recalling that integrability of $(1+|\xi|^2)^{\frac{s - \alpha}{2}}  \hat v_0(\xi) \in L^2(\R^d;\C^d)$, by the Dominated Convergence Theorem we conclude that 
  \begin{equation} \label{eq:int xi geq 1}
   \lim_{\delta\to0} \sup_{t \in [0,T]}  \int_{ \{|\xi| > 1\} } (1+|\xi|^2)^s | \hat v_0(\xi)|^2  \Big|\frac{\sin(\omega_\delta(\xi) t)}{\omega_\delta(\xi)} - \frac{\sin(\gamma |\xi| t)}{\gamma |\xi|} \Big|^2 \,  \d \xi = 0 \, .
  \end{equation}
  Combining~\eqref{eq:int xi leq 1} and~\eqref{eq:int xi geq 1} in~\eqref{eq:split integral} gives 
  \begin{equation} \label{eq:limit of sin part}
  \lim_{\delta\to0}  \sup_{t \in [0,T]} \int_{ \R^d } (1+|\xi|^2)^s  | \hat v_0(\xi)|^2 \Big|\frac{\sin(\omega_\delta(\xi) t)}{\omega_\delta(\xi)} - \frac{\sin(\gamma |\xi| t)}{\gamma |\xi|} \Big|^2  \,  \d \xi = 0 \, .
  \end{equation}

  Summing~\eqref{eq:limit of cos part} and~\eqref{eq:limit of sin part}, by~\eqref{eq:Fourier_difference} we conclude that 
  \[
  \lim_{\delta\to0}
  \sup_{t \in [0,T]}   \|  u_\delta(t,\cdot) -   u(t,\cdot) \|_{H^s(\R^d;\R^d)}  = 0 \,  .
  \]

  \underline{\itshape Estimating $\de_t u_\delta - \de_t u$}. By~\eqref{eq:Fourier_solution} and~\eqref{eq:Fourier_wave}, for every~$t \in [0,T]$ we have that 
  \[
  \begin{split}  
  \de_t \hat u_\delta(t,\cdot) & = -\omega_\delta(\cdot) \sin(\omega_\delta(\cdot)t) \hat u_0 + \cos(\omega_\delta(\cdot)t) \hat v_0  \\
  {\mbox{and }}\qquad
  \de_t \hat u(t,\cdot) & = -\gamma|\cdot| \sin(\gamma|\cdot|t) \hat u_0 + \cos(|\cdot|t) \hat v_0 \, .
\end{split}
  \]
  It follows that 
  \[  \label{eq:Fourier_difference_derivative}
    \begin{split}
    & \|\de_t u_\delta(t,\cdot) - \de_t u(t,\cdot) \|_{H^{s-1}(\R^d;\R^d)} \leq C \left( \int_{\R^d} (1+|\xi|^2)^{s-1} \big|\de_t \hat u_\delta(t,\xi) - \de_t \hat u(t,\xi)\big|^2 \d \xi \right)^\frac{1}{2} \\
    & \qquad \leq  C \left( \int_{\R^d} (1+|\xi|^2)^{s-1} |\hat u_0(\xi)|^2 \big|\omega_\delta(\xi)\sin(\omega_\delta(\xi)t) - \gamma |\xi| \sin(\gamma |\xi| t)\big|^2 \d \xi \right)^\frac{1}{2} \\
    & \qquad\quad \quad + C \left( \int_{\R^d} (1+|\xi|^2)^{s-1} |\hat v_0(\xi)|^2 \big|\cos(\omega_\delta(\xi)t) - \cos(\gamma |\xi| t) \big|^2 \d \xi \right)^\frac{1}{2}  \,.
    \end{split}
  \]
  To study the first integral in~\eqref{eq:Fourier_difference_derivative}, we note that Lemma~\ref{lem:difference_omega_delta_gamma_xi} yields that,
for every~$\xi \in \R^d$,
  \[
  \begin{split}
  & \lim_{\delta\to0}\sup_{t \in [0,T]}\big|\omega_\delta(\xi)\sin(\omega_\delta(\xi)t) - \gamma |\xi| \sin(\gamma |\xi| t)\big| \\
  &\quad \leq \lim_{\delta\to0}\sup_{t \in [0,T]} \Big( \big|\omega_\delta(\xi)  - \gamma |\xi|   \big| +  \gamma |\xi| \big| \sin(\omega_\delta(\xi)t) -   \sin(\gamma |\xi| t)\big| \Big)\\
  & \quad \leq \lim_{\delta\to0}(1 + \gamma|\xi| T)  \big|\omega_\delta(\xi)  - \gamma |\xi|   \big|  \leq \lim_{\delta\to0}C (1 + \gamma|\xi| T) \delta |\xi|^2 = 0  \, .
  \end{split}
  \]
  Moreover, by Lemma~\ref{lem:omega_delta_low_high_frequencies} we have that~$\omega_\delta(\xi) \leq C|\xi|$, hence  
  \[
    (1+|\xi|^2)^{s-1} |\hat u_0(\xi)|^2 \big|\omega_\delta(\xi)\sin(\omega_\delta(\xi)t) - \gamma |\xi| \sin(\gamma |\xi| t)\big|^2 \leq C (1+|\xi|^2)^{s} |\hat u_0(\xi)|^2 \in L^1(\R^d) \, .
  \]
  We apply the Dominated Convergence Theorem to conclude that 
  \[ \label{eq:limit of sin part derivative}
  \lim_{\delta\to0}\sup_{t \in [0,T]} \left( \int_{\R^d} (1+|\xi|^2)^{s-1} |\hat u_0(\xi)|^2 \big|\omega_\delta(\xi)\sin(\omega_\delta(\xi)t) - \gamma |\xi| \sin(\gamma |\xi| t)\big|^2 \d \xi \right)^\frac{1}{2} = 0 \, .
  \]
  
  To study the second integral in~\eqref{eq:Fourier_difference_derivative}, we observe that Lemma~\ref{lem:difference_omega_delta_gamma_xi} yields
  that, for every~$\xi \in \R^d$, 
  \[
\lim_{\delta\to0}  \sup_{t \in [0,T]}(1+|\xi|^2)^{s-1} |\hat v_0(\xi)|^2 \big|\cos(\omega_\delta(\xi)t) - \cos(\gamma |\xi| t) \big|^2 \leq C (1+|\xi|^2)^{s-1} |\hat v_0(\xi)|^2 T^2 \delta^2 |\xi|^4 = 0 \, .
  \]
  Moreover, $v_0 \in H^{s-\alpha}(\R^d;\R^d) \subset H^{s-1}(\R^d;\R^d)$, hence
  \[
    (1+|\xi|^2)^{s-1} |\hat v_0(\xi)|^2 \big|\cos(\omega_\delta(\xi)t) - \cos(\gamma |\xi| t) \big|^2 \leq 4 (1+|\xi|^2)^{s-1} |\hat v_0(\xi)|^2  \in L^1(\R^d) \, .
  \]
  We can apply the Dominated Convergence Theorem to conclude that
  \[ \label{eq:limit of cos part derivative}
   \lim_{\delta\to0}\sup_{t \in [0,T]} \left( \int_{\R^d} (1+|\xi|^2)^{s-1} |\hat v_0(\xi)|^2 \big|\cos(\omega_\delta(\xi)t) - \cos(\gamma |\xi| t) \big|^2 \d \xi \right)^\frac{1}{2} =0 \, .
  \]

  Summing~\eqref{eq:limit of sin part derivative} and~\eqref{eq:limit of cos part derivative}, by~\eqref{eq:Fourier_difference_derivative} we deduce that
  \[
  \lim_{\delta\to0} \sup_{t \in [0,T]}   \|  \de_t u_\delta(t,\cdot) -   \de_t u(t,\cdot) \|_{H^{s-1}(\R^d;\R^d)}  = 0\,  .
  \]
  This concludes the proof.
\end{proof}

\begin{remark} \label{rem:initial_data}
  We stress the importance of taking initial data in the correct spaces in Theorem~\ref{thm:delta_to_0}.

  Specifically, the space for the initial displacement and the space for the initial velocity must exhibit a regularity gap given by~$\alpha \in (0,1)$, consistently with the regularity of the solutions to the linear peridynamics model.
  For instance, if~$u_0 \in H^1(\R^d;\R^d)$ and~$v_0 \in H^{1-\alpha}(\R^d;\R^d)$, then 
  \[ \lim_{\delta\to0}
  \sup_{t \in [0,T]} \Big( \| u_\delta(t,\cdot) - u(t,\cdot) \|_{H^1(\R^d;\R^d)} + \| \de_t u_\delta(t,\cdot) - \de_t u(t,\cdot) \|_{L^2(\R^d;\R^d)} \Big) = 0  \, ,
  \]
  \ie, the norms used in the convergence are consistent with the energy space of the classical wave equation.

  If, instead, initial data are chosen consistently with the wave equation, \ie, $u_0 \in H^1(\R^d;\R^d)$ and~$v_0 \in L^2(\R^d;\R^d)$ (with a regularity gap equal to~$1 > \alpha$), then one gets convergence of solutions to the linear peridynamics model to solutions to the wave equation as follows. 
  We use the fact that~$u_0 \in H^1(\R^d;\R^d) \subset H^\alpha(\R^d;\R^d)$ to get a regularity gap of~$\alpha$ between~$u_0$ and~$v_0$. Then the solution to the linear peridynamics model belongs to~$C([0,+\infty);H^\alpha(\R^d;\R^d)) \cap C^1((0,+\infty);L^2(\R^d;\R^d))$ and Theorem~\ref{thm:delta_to_0} gives the convergence 
  \[ 
\lim_{\delta\to0}
  \sup_{t \in [0,T]} \Big( \| u_\delta(t,\cdot) - u(t,\cdot) \|_{H^\alpha(\R^d;\R^d)} + \| \de_t u_\delta(t,\cdot) - \de_t u(t,\cdot) \|_{H^{\alpha-1}(\R^d;\R^d)} \Big) = 0 \, ,
  \]
  \ie, the norms used in the convergence are weaker than those consistent with the energy space of the classical wave equation. 
 \emph{A posteriori}, $u \in C([0,+\infty);H^1(\R^d;\R^d)) \cap C^1((0,+\infty);L^2(\R^d;\R^d))$ by uniqueness of the solution to the wave equation. 
\end{remark}

\section{Comparison with solutions to the wave equation: Low frequencies}
\label{5647382prooft362y584}

In this section we study quantitatively the distance between solutions to the linear peridynamics model and solutions to the classical wave equation when the initial data are built upon low frequencies. 

\begin{remark} \label{rem:low_frequencies}
  Before stating the result, we observe that if~$u_0$ and~$v_0$ be such that $\supp(\hat u_0) \subset \{|\xi| \leq R\}$ and $\supp(\hat v_0) \subset \{|\xi| \leq R\}$, then the solutions to the linear peridynamics model~$u_\delta$ and to the wave equation~$u$ also satisfy the same regularity, \ie, $\supp(\hat u_\delta(t,\cdot)) \subset \{|\xi| \leq R\}$ and $\supp(\hat u(t,\cdot)) \subset \{|\xi| \leq R\}$ for every~$t \in [0,+\infty)$. 
  This follows from the fact that the Fourier transform of the solution to the linear peridynamics model is given by~\eqref{eq:Fourier_solution} and the Fourier transform of the solution to the wave equation is given by~\eqref{eq:Fourier_wave}.
\end{remark}

\begin{proof}[Proof of Theorem~\ref{thm:low_frequencies}]
  By Remark~\ref{rem:low_frequencies}, formulas~\eqref{eq:Fourier_solution} and~\eqref{eq:Fourier_wave}, and Lemma~\ref{lem:difference_omega_delta_gamma_xi}, for every~$t \in [0,T]$ we have that
  \[
  \begin{split}
  & \| u_\delta(t,\cdot) - u(t,\cdot) \|^2_{H^s(\R^d;\R^d)} = \int_{\{|\xi| \leq R\}} (1+|\xi|^2)^s |\hat u_\delta(t,\xi) - \hat u(t,\xi)|^2 \d \xi \\
  & \quad = \int_{\{|\xi| \leq R\}} (1+|\xi|^2)^s |\hat u_0(\xi)|^2 \Big| \cos(\omega_\delta(\xi)t) - \cos(\gamma |\xi| t) \Big|^2 \d \xi \\
  & \qquad\quad \quad + \int_{\{|\xi| \leq R\}} (1+|\xi|^2)^s |\hat v_0(\xi)|^2 \Big| \frac{\sin(\omega_\delta(\xi)t)}{\omega_\delta(\xi)} - \frac{\sin(\gamma |\xi| t)}{\gamma |\xi|} \Big|^2 \d \xi\\
  & \quad \leq  \int_{\{|\xi| \leq R\}} (1+|\xi|^2)^s \big( |\hat u_0(\xi)|^2 + |\hat v_0(\xi)|^2  \big) \big| \omega_\delta(\xi)t - \gamma |\xi| t \big|^2 \d \xi \\
  & \quad \leq  \int_{\{|\xi| \leq R\}} (1+|\xi|^2)^s \big( |\hat u_0(\xi)|^2 + |\hat v_0(\xi)|^2  \big) C T^2 \delta^2 |\xi|^4 \d \xi \\
  & \quad \leq C T^2 \delta^2 R^4 (1+R^2)^s \int_{\{|\xi| \leq R\}}  \big( |\hat u_0(\xi)|^2 + |\hat v_0(\xi)|^2  \big) \d \xi \\
  & \quad \leq C T^2 \delta^2 R^4 \big( \|u_0\|^2_{L^2(\R^d;\R^d)} + \|v_0\|^2_{L^2(\R^d;\R^d)} \big) \, ,
  \end{split}
  \]
  where we used the assumption~$R \leq 1$ in the last inequality.
 
  Analogously, using also~\eqref{eq:omega_delta_low_frequencies}
  \[
  \begin{split}
  & \| \de_t u_\delta(t,\cdot) - \de_t u(t,\cdot) \|^2_{H^{s-1}(\R^d;\R^d)} = \int_{\{|\xi| \leq R\}} (1+|\xi|^2)^{s-1} \big|\de_t \hat u_\delta(t,\xi) - \de_t \hat u(t,\xi)\big|^2 \d \xi \\
  & \quad = \int_{\{|\xi| \leq R\}} (1+|\xi|^2)^{s-1} |\hat u_0(\xi)|^2 \big| \omega_\delta(\xi)\sin(\omega_\delta(\xi)t) - \gamma |\xi| \sin(\gamma |\xi| t) \big|^2 \d \xi \\
  & \qquad\quad \quad + \int_{\{|\xi| \leq R\}} (1+|\xi|^2)^{s-1} |\hat v_0(\xi)|^2 \big| \cos(\omega_\delta(\xi)t) - \cos(\gamma |\xi| t) \big|^2 \d \xi\\
  & \quad \leq  \int_{\{|\xi| \leq R\}} (1+|\xi|^2)^{s-1}  |\hat u_0(\xi)|^2  \Big( \big| \omega_\delta(\xi)t - \gamma |\xi| t \big| + C |\xi| \big|\sin(\omega_\delta(\xi)t) - \sin(\gamma |\xi| t)  \big| \Big)^2 \d \xi \\
  &\qquad \quad \quad + \int_{\{|\xi| \leq R\}} (1+|\xi|^2)^{s-1}  |\hat v_0(\xi)|^2  \big| \omega_\delta(\xi)t - \gamma |\xi| t \big|^2 \d \xi \\
  & \quad \leq  \int_{\{|\xi| \leq R\}} (1+|\xi|^2)^{s}  |\hat u_0(\xi)|^2   C T^2 \delta^2 |\xi|^4   + \int_{\{|\xi| \leq R\}} (1+|\xi|^2)^{s-1}  |\hat v_0(\xi)|^2  C T^2 \delta^2 |\xi|^4 \d \xi \\
  & \quad \leq C T^2 \delta^2 R^4 (1+R^2)^{s}  \int_{\{|\xi| \leq R\}}  |\hat u_0(\xi)|^2 +  C T^2 \delta^2 R^4 (1+R^2)^{s-1} \int_{\{|\xi| \leq R\}}   |\hat v_0(\xi)|^2  \d \xi \\
  & \quad \leq  C T^2 \delta^2 R^4 \big( \|u_0\|^2_{L^2(\R^d;\R^d)} + \|v_0\|^2_{L^2(\R^d;\R^d)} \big) \,.
  \end{split}
  \]
  This concludes the proof.
\end{proof}

\begin{remark}
  A consequence of Theorem~\ref{thm:low_frequencies} is the following.
  Let~$\e > 0$, $s \geq 0$, $T \in [0,+\infty)$, and~$M > 0$.
  Then there exists a support size~$R_0 > 0$ such that for every~$R \leq R_0$ we have the following.
  
  Given initial data~$u_0$ and~$v_0$ with $\|u_0\|^2_{L^2(\R^d;\R^d)} + \|v_0\|^2_{L^2(\R^d;\R^d)} \leq M$ and $\supp(\hat u_0) \cup \supp(\hat v_0) \subset \{|\xi| \leq R\}$, let~$u_\delta$ be the unique distributional solution to the linear peridynamics model~\eqref{eq:linear_peridynamics} with initial data~$(u_0,v_0)$ provided by Theorems~\ref{thm:existence} and~\ref{thm:uniqueness} and let~$u$ be the unique distributional solution to the wave equation~\eqref{eq:wave} with initial data~$(u_0,v_0)$ provided by Theorem~\ref{thm:existence_wave}. 
  Then,
  \[
  \sup_{t \in [0,T]}  \Big( \| u_\delta(t,\cdot) - u(t,\cdot) \|_{H^s(\R^d;\R^d)} + \| \de_t u_\delta(t,\cdot) - \de_t u(t,\cdot) \|_{H^{s-1}(\R^d;\R^d)}\Big) \leq \e  \, .
  \]
\end{remark}

\begin{remark}
  Since the support of the Fourier transform of the initial data is contained in a given compact set, in Theorem~\ref{thm:low_frequencies} in the right-hand side of the inequalities  one can replace the~$L^2$ norm of the initial data with the~$H^{s'}$ norm of the initial data for any~$s'$. 
\end{remark}

 We conclude this section with the proof of Proposition~\ref{prop:almost energy conservation}.

\begin{proof}[Proof of Proposition~\ref{prop:almost energy conservation}]
The energy conserved by~$u_\delta$ is given by
\[
\begin{split}
E^{K_\delta}[u_\delta](t)  
& = \frac{1}{2}\int_{\R^d} |\de_t u_\delta(t,x)|^2  + \frac{1}{2}[u_\delta(t,\cdot)]_{\W_\delta^\alpha}^2 \, .
\end{split}
\]
We rewrite energy conservation for~$u_\delta$ as follows
\[
\begin{split}&
E^{\gamma^2\Delta}[u_\delta](t)  = E^{K_\delta}[u_\delta](t) + E^{\gamma^2\Delta}[u_\delta](t) - E^{\gamma^2\Delta}[u_\delta](t)\\&\qquad = E^{K_\delta}[u_\delta](0) + E^{\gamma^2\Delta}[u_\delta](t) - E^{K_\delta}[u_\delta](t) \\
& \qquad = E^{\gamma^2\Delta}[u_\delta](0) - E^{\gamma^2\Delta}[u_\delta](0) + E^{K_\delta}[u_\delta](0) + E^{\gamma^2\Delta}[u_\delta](t) - E^{K_\delta}[u_\delta](t) \, ,
\end{split}
\]
\ie,
\[ \label{eq:energy_difference}
  E^{\gamma^2\Delta}[u_\delta](t) = E^{\gamma^2\Delta}[u_\delta](0) - \left(
  \frac{1}{2}[u_\delta(t,\cdot)]_{\W_\delta^\alpha}^2 - \frac{\gamma^2}{2} \|\nabla u_\delta(t,\cdot)\|_{L^2}^2 \right) + \left( \frac{1}{2}[u_0]_{\W_\delta^\alpha}^2 - \frac{\gamma^2}{2} \|\nabla u_0\|_{L^2}^2\right) \, .
\] 

By Lemma~\ref{lem:difference_omega_delta_gamma_xi}, we have that 
\[
\begin{split}
&\Big| [u_0]_{\W_\delta^\alpha}^2 - \gamma^2 \|\nabla u_0\|_{L^2}^2 \Big| = \left| \int_{\R^2} \omega_\delta^2(\xi) |\hat u_0(\xi)|^2 \, \d \xi - \gamma^2 \int_{\R^2} |\xi|^2 |\hat u_0(\xi)|^2 \, \d \xi \right| \\
& \qquad\leq \int_{\R^d} \big| \omega_\delta^2(\xi) - \gamma^2 |\xi|^2 \big| |\hat u_0(\xi)|^2 \, \d \xi  \leq \int_{\R^d} C \delta^2 |\xi|^4 \big| |\hat u_0(\xi)|^2 \, \d \xi  \\
&\qquad \leq C \delta^2 R^2 \int_{\{|\xi| \leq R\}} |\xi|^2 |\hat u_0(\xi)|^2 \, \d \xi = C \delta^2 R^2 \|\nabla u_0\|_{L^2}^2 \, .
\end{split}
\]
Analogously, applying also~\eqref{eq:Fourier_solution},
\[ 
\begin{split}
& \Big| [u_\delta(t,\cdot)]_{\W_\delta^\alpha}^2 - \gamma^2 \|\nabla u_\delta(t,\cdot)\|_{L^2}^2 \Big| \leq C \delta^2 R^2 \int_{\R^d} |\xi|^2 |\hat u_\delta(t,\xi)|^2 \, \d \xi \\
& \qquad\quad \leq C \delta^2 R^2 \int_{\R^d} |\xi|^2 |\hat u_0(\xi)|^2 \, \d \xi + C \delta^2 R^2 \int_{\R^d} \frac{|\xi|^2}{\omega_\delta^2(\xi)} |\hat v_0(\xi)|^2 \, \d \xi \, .
\end{split}
\]
By Lemma~\ref{lem:omega_delta_low_high_frequencies}, for~$R$ small enough we have that 
\[
\omega_\delta^2(\xi) \geq \frac{\gamma^2}{2} |\xi|^2 \, , \quad \text{for } |\xi| \leq R \, .
\]
It follows that 
\[
\begin{split}
& \Big| [u_\delta(t,\cdot)]_{\W_\delta^\alpha}^2 - \gamma^2 \|\nabla u_\delta(t,\cdot)\|_{L^2}^2 \Big| \leq C \delta^2 R^2 \left(\frac{1}{2} \int_{\R^d} |\hat v_0(\xi)|^2 \, \d \xi +  \int_{\R^d} |\xi|^2 |\hat u_0(\xi)|^2 \, \d \xi \right) \\
&\qquad \leq C \delta^2 R^2 \left(\frac{1}{2} \int_{\R^d} |v_0(x)|^2 \, \d x +  \int_{\R^d} |\nabla u_0(x)|^2 \, \d x \right) = C \delta^2 R^2 E^{\gamma^2\Delta}[u_\delta](0) \, .
\end{split} 
\]
Choosing suitably~$\delta$ and~$R$, we obtain the desired result.
\end{proof}

\begin{acknowledgements}
   G.\ M.\ Coclite, F.\ Maddalena, and G.\ Orlando are members of Gruppo Nazionale per l'Analisi Matematica, la Probabilit\`a e le loro Applicazioni (GNAMPA) of the Istituto Nazionale di Alta Matematica (INdAM).
 
   G.\ M.\ Coclite, F.\ Maddalena, and G.\ Orlando have been partially supported by the Research Project of National Relevance ``Evolution problems involving interacting scales'' granted by the Italian Ministry of Education, University and Research (MUR Prin 2022, project code 2022M9BKBC, Grant No. CUP D53D23005880006).
 
   G.\ M.\ Coclite, F.\ Maddalena, and G.\ Orlando acknowledge financial support under the National Recovery and Resilience Plan (NRRP) funded by the European Union - NextGenerationEU -  
Project Title ``Mathematical Modeling of Biodiversity in the Mediterranean sea: from bacteria to predators, from meadows to currents'' - project code P202254HT8 - CUP B53D23027760001 -  
Grant Assignment Decree No. 1379 adopted on 01/09/2023 by the Italian Ministry of University and Research (MUR).

G.\ M.\ Coclite, F.\ Maddalena, and G.\ Orlando  were partially supported by the Italian Ministry of University and Research under the Programme ``Department of Excellence'' Legge 232/2016 (Grant No. CUP - D93C23000100001).

   S.\ Dipierro acknowledges financial support
by the Australian Research Council Future Fellowship FT230100333 New
perspectives on nonlocal equations.

E.\ Valdinoci acknowledges financial support
by the Australian Laureate Fellowship FL190100081 Minimal surfaces,
free boundaries and partial differential equations.

\end{acknowledgements}

\printbibliography

\end{document}